\newcommand{\Cov}{\mathbb{C}\mathrm{ov}}
\newcommand{\e}{\mathrm{e}}
\newcommand{\dif}{\mathrm{d}}
\newcommand{\leb}{\mathrm{L}}
\theoremstyle{plain}
\newtheorem{theorem}{Theorem}[section] % theorems are numbered for each section
\newtheorem{lemma}[theorem]{Lemma} % lemmas, corollaries, etc., are numbered consecutively
\newtheorem{remark}[theorem]{Remark}
\newtheorem{example}[theorem]{Example}
\numberwithin{equation}{section} % equations are numbered for each section
\title[Linear differential equation with analytic uncertainties]{Second order linear differential equations with analytic uncertainties: stochastic analysis via the computation of the probability density function}
\author[Marc Jornet, Julia Calatayud, Olivier P. Le Ma\^itre, Juan Carlos Cort\'es]{Marc Jornet $^1$, Julia Calatayud $^1$, Olivier P. Le Ma\^itre $^2$, \\ Juan Carlos Cort\'es $^1$}
\begin{document}

\maketitle

\begin{center}
\noindent
\address{$^1$ Institut de Matem\`{a}tica Multidisciplinar, Universitat Polit\`{e}cnica de Val\`{e}ncia, \\
 Cam\'i de Vera s/n, 46022, Valencia, Spain. \\
\ \\
$^2$ CMAP, CNRS, INRIA, \'Ecole Polytechnique, Institut Polytechnique de Paris, 91128 Palaiseau, France. \\
\ \\
Emails: marjorsa@doctor.upv.es; jucagre@doctor.upv.es; olivier.le-maitre@polytechnique.edu; jccortes@imm.upv.es
}
\end{center}

\begin{abstract}
This paper concerns the analysis of random second order linear differential equations. Usually, solving these equations consists of computing the first statistics of the response process, and that task has been an essential goal in the literature. A more ambitious objective is the computation of the solution probability density function. We present advances on these two aspects in the case of general random non-autonomous second order linear differential equations with analytic data processes. 
The Fr\"obenius method is employed to obtain the stochastic solution in the form of a mean square convergent power series. We demonstrate that the convergence requires the boundedness of the random input coefficients. Further, the mean square error of the Fr\"obenius method is proved to decrease exponentially with the number of terms in the series, although not uniformly in time. 
Regarding the probability density function of the solution at a given time, which is the focus of the paper, we rely on the law of total probability to express it in closed-form as an expectation. For the computation of this expectation, a sequence of approximating density functions is constructed by reducing the dimensionality of the problem using the truncated power series of the fundamental set. We prove several theoretical results regarding the pointwise convergence of the sequence of density functions and the convergence in total variation. The pointwise convergence turns out to be exponential under a Lipschitz hypothesis. 
As the density functions are expressed in terms of expectations, we propose a crude Monte Carlo sampling algorithm for their estimation. This algorithm is implemented and applied on several numerical examples designed to illustrate the theoretical findings of the paper. After that, the efficiency of the algorithm is improved by employing the control variates method. Numerical examples corroborate the variance reduction of the Monte Carlo approach. \\ 
\\ 
\textit{Keywords:} random non-autonomous second order linear differential equation; mean square analytic solution; probability density function; Monte Carlo simulation; uncertainty quantification. \\
\\
\textit{AMS Classification 2010:} 37H10; 34F05; 60H10; 60H35; 65C05.
\end{abstract}

\section{Introduction}
Random differential equations are ordinary differential equations whose input coefficients are random quantities, in the form of random variables or stochastic processes (not to be confused with It\^o's stochastic differential equations, which are differential equations driven by Wiener noise). 
In this setting, there is a complete abstract probability space $(\Omega,\mathcal{F},\mathbb{P})$, where $\Omega$ is the sample space defined as the set of outcomes $\omega\in\Omega$, $\mathcal{F}\subseteq 2^\Omega$ is the $\sigma$-algebra of events, and $\mathbb{P}$ is the probability measure. 

The stochastic solution may be considered in two senses. One approach considers the random calculus that arises from the Lebesgue space $(\leb^p(\Omega),\|\cdot\|_p)$, $1\leq p\leq\infty$, where the norms are defined as $\|X\|_p=\mathbb{E}[|X|^p]^{1/p}$ for $p<\infty$, and $\|X\|_\infty=\inf\{C>0:\,|X|\leq C \text{ almost surely}\}$ (essential supremum), where $\mathbb{E}$ denotes the expectation operator defined as $\mathbb{E}[X]=\int_\Omega X\,\dif\mathbb{P}$. The limits in the definitions of continuity, differentiability, and Riemann integrability are considered in the topology of $\leb^p(\Omega)$. The space $(\leb^p(\Omega),\|\cdot\|_p)$, $1\leq p\leq\infty$, is a Banach space. Of particular importance is the case $p=2$, which gives rise to the Hilbert space $(\leb^2(\Omega),\langle \cdot,\cdot\rangle)$ of random variables with finite variance, whose inner product is defined as $\langle X,Y\rangle=\mathbb{E}[XY]$, $X,Y\in\leb^2(\Omega)$. The calculus in $\leb^2(\Omega)$ is referred to as the mean square calculus. A key feature of $\leb^2(\Omega)$ is that mean square convergence ensures convergence of the expectation and the variance. An alternative strategy to tackle random differential equations is the sample path approach, which considers the trajectories of the solution process by fixing each outcome $\omega\in\Omega$. An interesting result that links $\leb^p(\Omega)$ and sample path calculus states that every $\leb^p(\Omega)$ solution is also a sample path solution. For theoretical discussions on random differential equations and the types of stochastic solutions, we refer the reader to \cite{soong,neckel,strand,jc}.

Understanding the inherent stochastic nature of the solution is of primary importance. This is the focus of uncertainty quantification~\cite{smith}. The most common strategies for uncertainty quantification are Monte Carlo simulation~\cite{mc}, PC (polynomial chaos) expansions \cite{pc,xiu_llibre,olivier_llibre}, and perturbation methods~\cite{soong,xiu_llibre,liu2}. Some studies of different random differential equation problems providing a fair overview of the state-of-the-art literature can be found, for instance, in \cite{ricati_benito,tawil,dorini1,hussein1,pendol,nouri}.

In the case of random second order linear differential equations, important advances have been achieved for the computation of the first moments of the solution, via mean square calculus and the so-called Fr\"obenius method. The Fr\"obenius method consists in finding a mean square convergent power series solution, in analogy to the deterministic theory of ordinary differential equations. The general stochastic system is given by
\begin{equation}
 \begin{cases} \ddot{X}(t)+A(t)\dot{X}(t)+B(t)X(t)=0, \; t\in\mathbb{R}, \\ X(t_0)=Y_0, \;\dot{X}(t_0)=Y_1. \end{cases} 
 \label{problem}
\end{equation}
Here, $A(t)$ and $B(t)$ are stochastic processes and $Y_0$ and $Y_1$ are random variables on $(\Omega,\mathcal{F},\mathbb{P})$. The stochastic process $X(t)$ is the solution. We will assume that $A(t)$ and $B(t)$ are analytic stochastic processes on $(t_0-r,t_0+r)$, for $r>0$ fixed, in the mean square sense~\cite[p.~99]{soong}: $A(t)=\sum_{n=0}^\infty A_n (t-t_0)^n$ and $B(t)=\sum_{n=0}^\infty B_n (t-t_0)^n$ are two random power series in $\leb^2(\Omega)$, where $A_0,A_1,\ldots$, $B_0,B_1,\ldots$ are second order random variables. The expansions coincide with the Taylor series of $A(t)$ and $B(t)$. 

Airy, Hermite and Legendre differential equations are particular instances of~\eqref{problem}, which represent important stochastic models of Mathematical Physics. The rigorous analysis and construction of mean square solutions to these particular equations, using random power series, can be found in \cite{airy,hermite,legendre2}. We have proposed a generalization of these contributions to the general system~\eqref{problem} in \cite{frob1,frob2}.

In the recent paper~\cite{gpc2}, we investigated the resolution of random second order linear differential equations with PC-based methods. In~\cite{homotopy}, the authors proposed a homotopy technique to solve some particular random differential equations pertaining to the class given in~\eqref{problem}. Other solution techniques include variational iteration~\cite{variational} and Adomian decomposition~\cite{adomian}. Finally, a technique, analogous to the Fr\"obenius method but relying on the concept of differential transform, is proposed in \cite{dif1,dif2}.

A more ambitious objective is the computation of the probability density function of $X(t)$, denoted hereafter as $f_{X(t)}(x)=\frac{\dif (\mathbb{P}\circ X(t)^{-1})(x)}{\dif x}$. The probability density function is defined as a non-negative Borel measurable function characterized by $\mathbb{P}[X(t)\in \mathcal{C}]=\int_{\mathcal{C}} f_{X(t)}(x)\,\dif x$. Random variables having a probability density function are called absolutely continuous, meaning that their probability law is absolutely continuous with respect to the Lebesgue measure. The density function allows for calculating general statistics (expectation, variance, skewness, kurtosis, median, quantiles, mode, etc.) and confidence intervals via integration. 

In~\cite{constant}, the authors constructed approximations of the probability density functions of the solution to~\eqref{problem} when $A(t)$ and $B(t)$ do not vary stochastically in time, that is, when $A(t)=A$ and $B(t)=B$ are actually absolutely continuous random variables (autonomous case). A recent paper, \cite{ana}, presents the approximation of the probability density function of $X(t)$ when $A(t)=p(t;D)$ and $B(t)=q(t;D)$, that is to say, when both $A(t)$ and $B(t)$ depend on a unique absolutely continuous random variable $D$. This approach does not extend to the general problem~\eqref{problem} and certain theoretical points from that contribution are unclear. 

In this work, we provide an analysis of~\eqref{problem} via the Fr\"obenius method. The solution is expressed in the form of a mean square convergent power series, under $\leb^\infty(\Omega)$ convergence of $A(t)=\sum_{n=0}^\infty A_n (t-t_0)^n$ and $B(t)=\sum_{n=0}^\infty B_n (t-t_0)^n$ and mean square integrability of the initial data $Y_0$ and $Y_1$. The boundedness of the coefficients $A_0,A_1,\ldots$, $B_0,B_1,\ldots$ is necessary, as shown in examples of the paper. Truncation of unbounded supports of random coefficients can be carried out to assure the required boundedness. The bias error of the Fr\"obenius method is proved to decrease exponentially with the number of terms in the series. Therefore rapid approximations of the statistical moments of $X(t)$ can be derived. However, the exponential convergence is not uniform in time, and it may deteriorate as we move away from the initial instant $t_0$. Section~\ref{stoch_solution} considers these issues. An additional issue is the computation of the probability density function of $X(t)$. This is the main contribution of the paper in terms of novelty and length. Theoretically, the probability density function is given by a closed-form expression in terms of an expectation derived from the law of total probability and by exploiting the linearity of the problem. However, to evaluate it in practice, a dimensionality reduction of the problem is required. By truncating the power series, we construct a sequence of probability density functions that, under certain assumptions regarding Nemytskii operators, converges to the target density function pointwise. In this setting, the pointwise convergence of the densities implies convergence in $\leb^1(\mathbb{R})$ (total variation distance), and in fact, in $\leb^p(\mathbb{R})$, for $1\leq p<\infty$. The pointwise convergence rate is proved to be exponential under a certain Lipschitz condition, albeit being again not uniform in time. This theoretical analysis on the approximation of the probability density function is presented in Section~\ref{sec_dens}. As each approximating density function is expressed in terms of an expectation, they can be estimated via a Monte Carlo sampling strategy. The brute-force Monte Carlo method is implemented in the form of an algorithm, whose computational aspects are detailed in Section~\ref{comp_aspects}. The proposed algorithm is tested on several numerical examples in Section~\ref{num_examples}, to verify the theoretical findings of the paper and to illustrate computational aspects. In Section~\ref{sec_control_var}, we apply a variance reduction approach, the control variates method, to improve the efficiency of the crude Monte Carlo algorithm from the previous sections; the method is analyzed from the computational viewpoint and several test examples corroborate the gains. Finally, Section~\ref{concl} draws the main conclusions and points out potential lines of research for the future.

\section{Stochastic solution} \label{stoch_solution}

The initial value problem~\eqref{problem} was previously studied in the mean square sense. We start by recalling the mean square existence and uniqueness theorem proved in our recent contributions~\cite{frob1,frob2}. The proof uses fundamental results from deterministic power series extended to the random scenario, together with the basics of difference equations.

\begin{theorem} \label{nostre} \cite[Th.~2]{frob2}
Let $A(t)=\sum_{n=0}^\infty A_n (t-t_0)^n$ and $B(t)=\sum_{n=0}^\infty B_n (t-t_0)^n$ be two random series in the $\leb^\infty(\Omega)$ setting, for $t\in (t_0-r,t_0+r)$, being $r>0$ finite and fixed. Assume that the initial conditions $Y_0$ and $Y_1$ belong to $\leb^2(\Omega)$. Then the stochastic process $X(t)=\sum_{n=0}^\infty X_n(t-t_0)^n$, $t\in (t_0-r,t_0+r)$, where $X_0=Y_0$, $X_1=Y_1$ and for $n\geq 0$, $X_{n+2}=\frac{-1}{(n+2)(n+1)}\sum_{m=0}^n [(m+1)A_{n-m}X_{m+1}+B_{n-m}X_m]$,
is an analytic solution to the random initial value problem~\eqref{problem} in the mean square sense. Moreover, it is unique. Furthermore, by \cite[Subsection~3.4]{frob1}, if $Y_0$ and $Y_1$ are bounded random variables, then $X(t)$ is an analytic $\leb^\infty(\Omega)$ solution to~\eqref{problem}.
\end{theorem}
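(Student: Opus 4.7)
The plan is to construct the solution by formal Fr\"obenius substitution, derive the stated recurrence by equating coefficients, prove mean square convergence of the resulting random power series via a Cauchy majorant argument, verify that the series satisfies the ODE termwise, and obtain uniqueness from the fact that a mean square analytic function is determined by its Taylor coefficients. The $\leb^\infty(\Omega)$ statement should then follow by running the same estimates with $\|\cdot\|_\infty$ in place of $\|\cdot\|_2$.

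First I would postulate $X(t) = \sum_{n=0}^\infty X_n (t-t_0)^n$ and differentiate formally to get $\dot X(t) = \sum_{n=0}^\infty (n+1)X_{n+1}(t-t_0)^n$ and $\ddot X(t) = \sum_{n=0}^\infty (n+2)(n+1)X_{n+2}(t-t_0)^n$. Using the Cauchy product for the random series $A(t)\dot X(t)$ and $B(t) X(t)$ (whose validity in $\leb^2(\Omega)$ follows from H\"older's inequality, since $A_n, B_n \in \leb^\infty(\Omega)$), I would substitute into \eqref{problem} and collect powers of $(t-t_0)$. The coefficient of $(t-t_0)^n$ gives exactly the stated recurrence, and the initial conditions fix $X_0 = Y_0$, $X_1 = Y_1$. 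At this point everything is formal; the analytic content lies in the next step.

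The main obstacle is proving convergence of $\sum X_n(t-t_0)^n$ in $\leb^2(\Omega)$ on the full interval $(t_0-r, t_0+r)$. Fix $0<\rho<r$. Since the $\leb^\infty(\Omega)$-series for $A$ and $B$ converge at $t=t_0+\rho$, the terms $\|A_n\|_\infty \rho^n$ and $\|B_n\|_\infty \rho^n$ are bounded by some constant $M$, so $\|A_n\|_\infty, \|B_n\|_\infty \leq M \rho^{-n}$. Taking the $\leb^2$-norm of the recurrence and using $\|A_{n-m} X_{m+1}\|_2 \leq \|A_{n-m}\|_\infty \|X_{m+1}\|_2$ (and similarly for $B$), the sequence $a_n := \|X_n\|_2$ satisfies
\[
 a_{n+2} \leq \frac{M}{(n+2)(n+1)}\sum_{m=0}^n \rho^{-(n-m)}\bigl[(m+1)\,a_{m+1} + a_m\bigr].
\]
A standard Cauchy majorant argument, analogous to the classical proof of the existence of analytic solutions to second order linear ODEs, dominates $a_n$ by the Taylor coefficients of an auxiliary deterministic analytic function, yielding $a_n \leq C \rho'^{-n}$ for any $0 < \rho' < \rho$. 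Letting $\rho \uparrow r$ gives $\leb^2$-convergence on every compact subinterval of $(t_0-r, t_0+r)$.

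Once convergence is established, termwise differentiation of the mean square analytic series is legitimate (the differentiated series has the same radius of convergence by the same majorant bound), so plugging $X(t)$ back into \eqref{problem} yields a power series whose coefficients vanish by construction, confirming that $X(t)$ solves the equation in $\leb^2(\Omega)$. Uniqueness follows because any mean square analytic solution $\tilde X(t) = \sum \tilde X_n(t-t_0)^n$ satisfying the same initial data must, by matching coefficients, obey the same recurrence, hence $\tilde X_n = X_n$ for all $n$. Finally, for the $\leb^\infty(\Omega)$ assertion with bounded $Y_0, Y_1$, the entire argument transfers verbatim because $\leb^\infty(\Omega)$ is a Banach algebra: the products $A_{n-m}X_{m+1}$ and $B_{n-m}X_m$ are controlled by the products of $\|\cdot\|_\infty$ norms, the majorant estimate is unchanged, and termwise differentiation of the $\leb^\infty$-series is justified in the same way.
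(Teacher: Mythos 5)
Your construction follows the same route the paper attributes to its source: formal Fr\"obenius substitution to obtain the recurrence, an $\leb^\infty$--$\leb^2$ H\"older estimate $\|A_{n-m}X_{m+1}\|_2\leq\|A_{n-m}\|_\infty\|X_{m+1}\|_2$ feeding a deterministic majorant difference equation (this is precisely the machinery visible in the paper's Steps 1--3 for the constant $K$, with $\|A_i\|_\infty\leq C_u/u^i$), termwise mean square differentiation to verify the equation, and the observation that the whole scheme transfers to $\leb^\infty(\Omega)$ when $Y_0,Y_1$ are bounded because $\leb^\infty(\Omega)$ is a Banach algebra. That part is sound and is essentially the paper's argument.

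The one genuine gap is the uniqueness claim. Your argument --- matching Taylor coefficients --- only proves that $X(t)$ is the unique \emph{mean square analytic} solution; it says nothing about a hypothetical mean square solution that is not assumed analytic a priori. The theorem's ``Moreover, it is unique'' is meant in the larger class, and the paper itself signals how this is obtained: the boundedness of the coefficients $A(t),B(t)$ in $\leb^\infty(\Omega)$ yields the mean square Lipschitz condition required by the general existence and uniqueness theorem for random differential equations (Soong, pp.~118--119; Strand), applied to the first-order system $(X,\dot X)$. Concretely, writing the equation as $\dot{\mathbf{X}}=F(t,\mathbf{X})$ with $F(t,(x,y))=(y,-A(t)y-B(t)x)$, one has $\|F(t,\mathbf{X})-F(t,\mathbf{Y})\|_2\leq C\|\mathbf{X}-\mathbf{Y}\|_2$ with $C$ controlled by $\sup_t\|A(t)\|_\infty$ and $\sup_t\|B(t)\|_\infty$ on compact subintervals, and the standard Gronwall/Picard argument in $\leb^2(\Omega)$ gives uniqueness among all mean square solutions. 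Without this step your proof establishes existence and uniqueness-within-the-analytic-class only.
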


From this fundamental theorem we can extend the theory to a more general convergence measure, by considering $\leb^p(\Omega)$ convergence, $1\leq p\leq\infty$: if $A(t)$ and $B(t)$ are two random power series with convergence in $\leb^\infty(\Omega)$, for $t\in (t_0-r,t_0+r)$, and the initial conditions $Y_0$ and $Y_1$ belong to $\leb^p(\Omega)$, then the stochastic process $X(t)=\sum_{n=0}^\infty X_n(t-t_0)^n$ is the $\leb^p(\Omega)$ solution to~\eqref{problem} on $(t_0-r,t_0+r)$.

Regarding the rapidity of convergence of the power series $X(t)=\sum_{n=0}^\infty X_n (t-t_0)^n$ introduced in Theorem~\ref{nostre}, some theoretical estimates were obtained in \cite[Subsection~3.6]{frob1}, although no rate of convergence was derived. Fixed $r>0$ finite, given $\rho\coloneqq |t-t_0|<r$ and given an arbitrary $s$ such that $\rho<s<r$, the following estimate holds:
\[ \|X^N(t)-X(t)\|_2\leq K\left(r,s,\{\|A_i\|_\infty\}_{i=1}^\infty,\{\|B_i\|_\infty\}_{i=1}^\infty,\|Y_0\|_2,\|Y_1\|_2\right)\cdot \frac{(\rho/s)^{N+1}}{1-\rho/s}. \]
%In particular, estimates of $|\mathbb{E}[X^N(t)]-\mathbb{E}[X(t)]|$ and $|\mathbb{V}[X^N(t)]-\mathbb{V}[X(t)]|$ in terms of $\|X^N(t)-X(t)\|_2$ are derived by using Jensen's and Cauchy-Schwarz inequalities:
%\[
% |\mathbb{E}[X^N(t)]-\mathbb{E}[X(t)]|=  |\mathbb{E}[X^N(t)-X(t)]| 
%\leq \mathbb{E}[|X^N(t)-X(t)|]\leq \|X^N(t)-X(t)\|_2
%\]
%and
%\small
%\begin{align*}
%|\mathbb{V} {} & [X^N(t)]-\mathbb{V}[X(t)]|=|\mathbb{E}[(X^N(t))^2]-(\mathbb{E}[X^N(t)])^2-\mathbb{E}[(X(t))^2]+(\mathbb{E}[X(t)])^2|\\
%\leq {} & \mathbb{E}[|(X^N(t))^2-(X(t))^2|]+|(\mathbb{E}[X^N(t)])^2-(\mathbb{E}[X(t)])^2| \\
%= {} & \mathbb{E}[|X^N(t)-X(t)||X^N(t)+X(t)|]+|\mathbb{E}[X^N(t)]-\mathbb{E}[X(t)]||\mathbb{E}[X^N(t)]+\mathbb{E}[X(t)]| \\
%\leq {} & \|X^N(t)-X(t)\|_2\|X^N(t)+X(t)\|_2+\|X^N(t)-X(t)\|_2(|\mathbb{E}[X^N(t)]|+|\mathbb{E}[X(t)]|) \\
%\leq {} & 2\|X^N(t)-X(t)\|_2(\|X^N(t)\|_2+\|X(t)\|_2).
%\end{align*}
%\normalsize
In general, the estimate holds for $p$-norms. The constant $K$ can be constructed as follows (see~\cite{frob1}):
\begin{enumerate}
\item[Step 1.] Given $u=(r+s)/2\in (s,r)$, choose a constant $C_u>0$ such that $\|A_i\|_\infty\leq C_u/u^i$ and $\|B_i\|_\infty\leq C_u/u^i$, $i\geq0$. Such a constant $C_u$ exists because $\sum_{i=0}^\infty \|A_i\|_\infty u^i<\infty$ and $\sum_{i=0}^\infty \|B_i\|_\infty u^i<\infty$.
\item[Step 2.] Pick an integer $n\geq0$ such that $\frac{ns}{(n+2)u}+\frac{C_u s}{n+2}+\frac{C_u s^2}{(n+2)(n+1)}<1$.
\item[Step 3.] Take $K=\max_{0\leq m\leq n} H_m s^m$, where $\{H_m\}_{m=0}^\infty$ satisfies the recursive equation: $H_0=\|Y_0\|_2$, $H_1=\|Y_1\|_2$ and for $m\ge 0$, $H_{m+2}=\left(\frac{m}{(m+2)u}+\frac{C_u}{m+2}\right)H_{m+1}+\frac{C_u}{(m+2)(m+1)}H_m$. 
\end{enumerate}

From the constructed $K$ and given a target error $\epsilon>0$, a truncation order $N$ satisfying $N>\log( \epsilon^{-1} K (1-\rho/s)^{-1})/\log(s/\rho)-1=\mathcal{O}(\log (\epsilon^{-1}))$ guarantees a root mean square error $\|X^N(t)-X(t)\|_2$ less than $\epsilon$. The number $s$ is arbitrary in $(\rho,r)$. Unfortunately, we are not aware of any method to choose the optimal $s\in (\rho,r)$ minimizing $N=\log( \epsilon^{-1} K (1-\rho/s)^{-1})/\log(s/\rho)$.

We stress several new consequences from these estimates. First, the rate of convergence of $\{X^N(t)\}_{N=0}^\infty$ towards $X(t)$ as $N\rightarrow\infty$ is exponential, for $t\in (t_0-r,t_0+r)$, because it is proportional to $(\rho/s)^N$. Second, the convergence rate may deteriorate severely for large $\rho=|t-t_0|$ and large norms of the random input coefficients. Indeed, $K$ is growing with $s>\rho$ and the norms.

The fact that the convergence rate deteriorates for large $|t-t_0|$ is clear. Assume that we have $\sum_{n=N}^\infty \|X_n\|_2 |t-t_0|^n<\epsilon$, for some target error $\epsilon>0$. As $|t-t_0|^n$ increases when $|t-t_0|$ grows, a larger $N$ is needed to achieve a root mean square error less than $\epsilon$. This fact may especially occur for $|t-t_0|\geq1$, as in this case $|t-t_0|^n$ does not tend to $0$ when $n\rightarrow\infty$, therefore a faster decay of the coefficients $\|X_n\|_2$ is needed to assure convergence.

The numerical experiments presented in~\cite{frob1} also permitted analyzing the behavior of convergence. As theoretically expected by our exposition, the most important phenomenon observed was that the convergence rate deteriorates severely when the distance $|t-t_0|$ increases, therefore making the Fr\"obenius method computationally intractable. This issue also occurs with PC-type methods, which require large orders for long-time integration~\cite{timedepgpc} (in this case, multi-element methods may be an alternative \cite{wan,augustin}). Nonetheless, for not too large $|t-t_0|$ (this ``not too large'' is problem-dependent), the Fr\"obenius method works very well. 

Having analyzed the convergence rate of the Fr\"obenius method, let us focus now on the assumptions of Theorem~\ref{nostre}. In~\cite{frob2}, the following open problem was raised: ``If there exists a point $t_1\in (t_0-r,t_0+r)$ such that $A(t_1)\notin\leb^\infty(\Omega)$ or $B(t_1)\notin\leb^\infty(\Omega)$, then there exist two initial conditions $Y_0,Y_1\in\leb^2(\Omega)$ such that \eqref{problem} has no mean square solution on $(t_0-r,t_0+r)$''. This problem implies that the hypotheses used in Theorem~\ref{nostre} are sharp, in the sense that counterexamples exist if any of them is relaxed. The following two examples of~\eqref{problem} with an unbounded input coefficient have no mean square solution $X(t)$. The arguments to prove that these examples have no solution follow the reasoning of \cite[Example, pp.~541--542]{strand}.

\begin{example}[Non-existence of mean square solution $X(t)$] \label{non_ex1} \normalfont Consider the initial value problem~\eqref{problem} with $A(t)=0$, $B(t)=Z$ and the initial conditions $X(t_0=0)=Y_0$ and $\dot{X}(t_0=0)=0$.
% \[ \begin{cases} \ddot{X}(t)+Z X(t)=0, \; t\in\mathbb{R}, \\ X(t_0=0)=Y_0, \\ \dot{X}(t_0=0)=0, \end{cases} \]
Let $Z<0$ be an unbounded random variable (for example, $Z=-U$, where $U$ follows an Exponential, Gamma, Poisson, etc. distribution). Suppose that for any initial condition $Y_0\in\leb^2(\Omega)$ there is a mean square solution $X(t)$. By \cite[Th.~3(a)]{strand}, every mean square solution to a random differential equation problem is a sample path solution. More specifically, there exists an equivalent stochastic process, product measurable, whose sample paths solve the deterministic counterpart of the problem almost surely. Therefore $X(t)$ is a sample path solution (we choose the appropriate representative of the equivalence class), with $X(t)=Y_0\cosh(\sqrt{-Z}\,t)$ for all $t\in\mathbb{R}$, almost surely. Fix $t\neq0$. Consider the random variable $T=\cosh(\sqrt{-Z}\,t)$. Notice that $\|T\|_\infty=\infty$. Consider the operator $\Delta:\leb^2(\Omega)\rightarrow\leb^2(\Omega)$, $\Delta(Y)=YT$. This operator is linear and continuous, as a consequence of the closed graph theorem. Hence, there is a constant $C>0$ such that $\|Y T\|_2\leq C\|Y\|_2$, for all $Y\in\leb^2(\Omega)$. In fact, this inequality holds for any random variable $Y$ (since, if $Y\notin\leb^2(\Omega)$, then $\|Y\|_2=\infty$). Let $Y=T^m$. We have $\|T^{m+1}\|_2\leq C\|T^m\|_2$, which yields $\|T^m\|_2\leq C^m$. That is, $\|T\|_{2m}\leq C$. Hence, $\|T\|_\infty=\lim_{m\rightarrow\infty} \|T\|_{2m}\leq C$, but this is a contradiction. Thus, we conclude that there must exist an initial condition $Y_0\in\leb^2(\Omega)$ such that the stochastic problem has no mean square solution. 

The case in which $Z>0$ is unbounded (let us suppose that $Z$ is Gamma distributed) may be tackled analogously, although with a subtlety. Proceeding again by contradiction, let us suppose that for any initial condition $Y_0\in\leb^2(\Omega)$ there exists a mean square solution $X(t)$. By \cite[Th.~3(a)]{strand}, $X(t)=Y_0\cos(\sqrt{Z}\,t)$ for all $t\in\mathbb{R}$, almost surely. In contrast with the previous case, now $\cos(\sqrt{Z}\,t)$ is bounded. As $X(t)$ is mean square differentiable, its mean square derivative must be given by $\dot{X}(t)=-Y_0\sqrt{Z}\sin(\sqrt{Z}\,t)$ \cite[p.~536]{doob}. Fix $t\neq0$ and let $T=-\sqrt{Z}\sin(\sqrt{Z}\,t)$. Now we do have that $\|T\|_\infty=\infty$, so the previous reasoning based on the closed graph theorem can be applied to deduce that there exists an initial condition $Y_0\in\leb^2(\Omega)$ such that $\dot{X}\notin\leb^2(\Omega)$. This is a contradiction.

The general case, in which $Z$ is an unbounded random variable, is easily addressed now (this includes, for instance, the case of Gaussian random variables). If $Z$ is unbounded, then it must be unbounded on the positive or negative axis. Let us suppose it unbounded on the positive axis (the other case is completely analogous). Take $\tilde{\Omega}\subseteq\Omega$ such that $\mathbb{P}[\tilde{\Omega}]>0$ and $Z(\omega)>0$ for each $\omega\in\tilde{\Omega}$. Consider the new probability subspace $(\tilde{\Omega},\mathcal{F}_{\tilde{\Omega}}=\mathcal{F}\cap 2^{\tilde{\Omega}},\mathbb{P}_{\tilde{\Omega}}=\mathbb{P}|_{\mathcal{F}_{\tilde{\Omega}}})$. We restate the random differential equation problem on this new probability space, where $Z>0$ is unbounded. The previous case thus applies. Therefore we are done since every mean square solution on $\Omega$ must also be a mean square solution on $\tilde{\Omega}$. This analysis terminates the example.
\end{example}

\begin{example}[Non-existence of mean square solution $X(t)$] \label{non_ex2} \normalfont
Let us consider now problem~\eqref{problem} with $A(t)=Z$, $B(t)=0$ and the initial conditions $X(t_0=0)=0$, $\dot{X}(t_0=0)=Y_1$.
% \[ \begin{cases} \ddot{X}(t)+Z \dot{X}(t)=0, \; t\in\mathbb{R}, \\ X(t_0=0)=0, \\ \dot{X}(t_0=0)=Y_1, \end{cases} \]
Let $Z$ be any unbounded random variable. Suppose that for any initial condition $Y_1$, there exists a mean square solution $X(t)$. Let $Y(t)=\dot{X}(t)$, which satisfies $\dot{Y}(t)+Z Y(t)=0$, $Y(t_0=0)=Y_1$.
%\[ \begin{cases} \dot{Y}(t)+Z Y(t)=0, \; t\in\mathbb{R}, \\ Y(t_0=0)=Y_1. \end{cases} \]
By \cite[Th.~3(a)]{strand}, $Y(t)=Y_1\e^{-Zt}$ for all $t\in\mathbb{R}$, almost surely. Fix $t\neq0$ and let $T=\e^{-Zt}$. The random variable $T$ is unbounded. Hence, the same reasoning from Example~\ref{non_ex1} based on the closed graph theorem applies again. We conclude that there must exist $Y_1\in\leb^2(\Omega)$ such that $Y(t)\notin \leb^2(\Omega)$, which is a contradiction, and we are done with this example.
\end{example}

The boundedness of the random input coefficients is crucial to obtain the Lipschitz condition demanded by the general existence and uniqueness theorem for random differential equations \cite[pp.~118--119]{soong}, \cite{strand}, \cite[Th.~4]{frob2}. In practice, to satisfy this mandatory boundedness, one may truncate the support to a large but bounded interval.

\section{Computation of the probability density function} \label{sec_dens}

We now turn to the computation of the probability density function of $X(t)$. Having clarified the conditions for the existence of the solution, we start by rewriting $X(t)$ in an alternative form.

\begin{theorem} \label{nova_expr}
Let $A(t)=\sum_{n=0}^\infty A_n (t-t_0)^n$ and $B(t)=\sum_{n=0}^\infty B_n (t-t_0)^n$ be two random series in the $\leb^\infty(\Omega)$ setting, for $t\in (t_0-r,t_0+r)$, being $r>0$ finite and fixed. Assume that the initial conditions $Y_0$ and $Y_1$ belong to $\leb^2(\Omega)$. Then the mean square analytic solution $X(t)$ can be expressed as $X(t)=Y_0 S_0(t)+Y_1 S_1(t)$, $t\in (t_0-r,t_0+r)$, where $S_0(t)$ and $S_1(t)$ are random power series solutions to~\eqref{problem} in $\leb^\infty(\Omega)$ for the \emph{deterministic} initial conditions
$S_0(t_0)=1$, $\dot{S}_0(t_0)=0$, and $S_1(t_0)=0$, $\dot{S}_1(t_0)=1$, respectively.
\end{theorem}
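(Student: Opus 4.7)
The plan is to exploit the linearity of the Fr\"obenius recursion in Theorem~\ref{nostre}. First, applying Theorem~\ref{nostre} with the deterministic initial pairs $(1,0)$ and $(0,1)$ (both of which lie trivially in $\leb^\infty(\Omega)$), one obtains two $\leb^\infty(\Omega)$ analytic solutions $S_0(t)=\sum_{n=0}^\infty S_{0,n}(t-t_0)^n$ and $S_1(t)=\sum_{n=0}^\infty S_{1,n}(t-t_0)^n$ on $(t_0-r,t_0+r)$, with coefficients generated by the recursion of Theorem~\ref{nostre} starting from the prescribed deterministic pairs.

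Second, I observe that the recursion $X_{n+2}=\frac{-1}{(n+2)(n+1)}\sum_{m=0}^n[(m+1)A_{n-m}X_{m+1}+B_{n-m}X_m]$ is an $\mathbb{R}$-linear map of the initial pair $(X_0,X_1)$, since the random coefficients $A_{n-m}, B_{n-m}$ depend only on the data processes. A direct induction on $n$ therefore yields $X_n=Y_0\,S_{0,n}+Y_1\,S_{1,n}$ almost surely for every $n\geq 0$, and the partial sums decompose as
\[ \sum_{n=0}^N X_n(t-t_0)^n \;=\; Y_0\sum_{n=0}^N S_{0,n}(t-t_0)^n \;+\; Y_1\sum_{n=0}^N S_{1,n}(t-t_0)^n. \]

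To conclude, I let $N\to\infty$ in the $\leb^2(\Omega)$ topology. The left-hand side converges to $X(t)$ by Theorem~\ref{nostre}. For the right-hand side, the elementary bound $\|Y_i V\|_2\leq \|Y_i\|_2\,\|V\|_\infty$ valid for any $V\in\leb^\infty(\Omega)$ shows that multiplication by $Y_i$ is a continuous linear map from $\leb^\infty(\Omega)$ into $\leb^2(\Omega)$; combined with the $\leb^\infty(\Omega)$-convergence of $\sum_{n=0}^N S_{i,n}(t-t_0)^n$ to $S_i(t)$ ($i=0,1$), this forces the $\leb^2(\Omega)$-convergence of the right-hand side to $Y_0\,S_0(t)+Y_1\,S_1(t)$. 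Uniqueness of the $\leb^2(\Omega)$ limit then gives $X(t)=Y_0 S_0(t)+Y_1 S_1(t)$. The only real obstacle is bookkeeping: one must keep the two topologies $\leb^\infty$ and $\leb^2$ in their proper roles so that the scalar products $Y_i S_{i,n}$ and the infinite sum commute correctly; this is precisely what the $\leb^\infty\!\cdot\!\leb^2\hookrightarrow\leb^2$ inequality handles, allowing the argument to bypass any independence or joint moment assumptions between the initial data $Y_0,Y_1$ and the coefficient processes $A(t),B(t)$.
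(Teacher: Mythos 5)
Your proof is correct and follows exactly the route the paper intends (the paper leaves the argument implicit, merely stating the recursions for $S_{0,n}$ and $S_{1,n}$): apply Theorem~\ref{nostre} to the bounded deterministic initial pairs $(1,0)$ and $(0,1)$ to get $\leb^\infty(\Omega)$ solutions $S_0,S_1$, use the linearity of the difference equation to get $X_n=Y_0S_{0,n}+Y_1S_{1,n}$ by induction, and pass to the limit via $\|Y_iV\|_2\leq\|Y_i\|_2\|V\|_\infty$. Your explicit handling of the $\leb^\infty\cdot\leb^2\hookrightarrow\leb^2$ step is exactly the right bookkeeping and adds nothing beyond what the paper's argument requires.
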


Notice that we write $X(t)$ as a linear combination of the fundamental set $\{S_0(t),S_1(t)\}$. This expression exploits the linearity of the problem. The processes $S_0(t)$ and $S_1(t)$ are random power series in $\leb^\infty(\Omega)$, 
\[ S_0(t)=\sum_{n=0}^\infty S_{0,n}(t-t_0)^n, \quad S_1(t)=\sum_{n=0}^\infty S_{1,n}(t-t_0)^n, \]
whose coefficients satisfy a difference equation as in Theorem~\ref{nostre}; for $S_0(t)$ it comes $S_{0,0}=1$, $S_{0,1}=0$, and for $n\geq0$,
$
 S_{0,n+2}=\frac{-1}{(n+2)(n+1)}\sum_{m=0}^n [(m+1)A_{n-m}S_{0,m+1}+B_{n-m}S_{0,m}] 
 $,
while for $S_1(t)$ we have $S_{1,0}=0$, $S_{1,1}=1$, and 
$
 S_{1,n+2}=\frac{-1}{(n+2)(n+1)}\sum_{m=0}^n [(m+1)A_{n-m}S_{1,m+1}+B_{n-m}S_{1,m}]
 $, for $n\geq0$.

The following lemma is necessary to compute the probability density function $f_{X(t)}(x)$. Its proof is a consequence of the law of total probability \cite[Ch.~6]{totalProb1}, \cite[Def.~7.11]{totalProb2}. 

\begin{lemma} \label{convo}
Let $U$ be an absolutely continuous random variable, independent of the random vector $(Z_1,Z_2)$, where $Z_1\neq0$ almost surely. Then $Z_1 U+Z_2$ is absolutely continuous, with density function $f_{Z_1 U+Z_2}(z)=\mathbb{E}[f_U((z-Z_2)/Z_1)/|Z_1|]$.
\end{lemma}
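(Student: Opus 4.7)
The plan is to verify the claimed formula against the defining property of a density, namely $\mathbb{P}[Z_1 U + Z_2 \in \mathcal{C}] = \int_{\mathcal{C}} f_{Z_1 U + Z_2}(z)\,\dif z$ for every Borel set $\mathcal{C}\subseteq\mathbb{R}$. First I would condition on the random vector $(Z_1, Z_2)$ via the law of total probability and use the independence of $U$ from $(Z_1, Z_2)$ to freeze the random coefficients: for almost every realization $(z_1, z_2)$ of $(Z_1, Z_2)$, with $z_1\neq 0$, one has $\mathbb{P}[Z_1 U + Z_2 \in \mathcal{C}\mid Z_1=z_1, Z_2=z_2]=\mathbb{P}[z_1 U + z_2 \in \mathcal{C}]$.

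Next I would compute this deterministic probability by a change of variables. Writing $\mathbb{P}[z_1 U + z_2\in\mathcal{C}] = \int_{\mathbb{R}}\mathbbm{1}_{\mathcal{C}}(z_1 u + z_2)\,f_U(u)\,\dif u$ and substituting $z = z_1 u + z_2$ (whose Jacobian contributes a factor $1/|z_1|$, absorbing the orientation reversal when $z_1<0$ into the absolute value) yields the pointwise identity $\mathbb{P}[z_1 U + z_2\in\mathcal{C}] = \int_{\mathcal{C}} f_U((z-z_2)/z_1)/|z_1|\,\dif z$. Reinserting the random pair and taking expectation gives $\mathbb{P}[Z_1 U + Z_2\in\mathcal{C}] = \mathbb{E}\!\left[\int_{\mathcal{C}} f_U((z-Z_2)/Z_1)/|Z_1|\,\dif z\right]$.

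To conclude, I would invoke Tonelli's theorem to swap the expectation with the Lebesgue integral over $\mathcal{C}$, obtaining $\int_{\mathcal{C}}\mathbb{E}[f_U((z-Z_2)/Z_1)/|Z_1|]\,\dif z$. Since this identity holds for every Borel $\mathcal{C}$, the bracketed expression, as a function of $z$, is a version of the density of $Z_1 U + Z_2$, establishing at once its absolute continuity and the announced formula.

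The main technical point will be justifying the application of Tonelli: I would observe that $f_U$ is Borel measurable by definition, so $(z,\omega)\mapsto f_U((z-Z_2(\omega))/Z_1(\omega))/|Z_1(\omega)|$ is a Borel composition and hence jointly measurable on the full-measure event $\{Z_1\neq 0\}$; non-negativity of the integrand then removes any need for an a priori integrability assumption. The only further bookkeeping is to discard the null set $\{Z_1=0\}$ when disintegrating and to track the sign in the change of variables, both of which are routine.
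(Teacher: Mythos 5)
Your proposal is correct and follows essentially the same route the paper indicates: conditioning on $(Z_1,Z_2)$ via the law of total probability, using independence to freeze the coefficients, applying the affine change of variables to get the conditional density, and exchanging expectation with the integral over $\mathcal{C}$ by Tonelli. The paper only sketches this by citing the law of total probability, so your write-up is simply a fully detailed version of the intended argument, including the measurability bookkeeping.
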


This lemma provides an alternative to the random variable transformation method \cite[Th.~1]{ana}, in the case of affine mappings. It does not require that the random quantities have an absolutely continuous probability law, a fact that presents advantages from the practical perspective. The drawback is that we need independence between $U$ and $(Z_1,Z_2)$ to represent the probability density function as an expectation. The expectation can be approximated via sampling-based statistical methods, as discussed later on.
The following theorem, which derives the probability density function of $X(t)$, is a straightforward consequence of Lemma~\ref{convo}.

\begin{theorem} \label{the_densitat}
Let $A(t)=\sum_{n=0}^\infty A_n (t-t_0)^n$ and $B(t)=\sum_{n=0}^\infty B_n (t-t_0)^n$ be two random series in the $\leb^\infty(\Omega)$ setting, for $t\in (t_0-r,t_0+r)$, being $r>0$ finite and fixed. Suppose that the initial conditions $Y_0$ and $Y_1$ belong to $\leb^2(\Omega)$. If $S_0(t)\neq0$ almost surely, if $Y_0$ is absolutely continuous, with density function $f_{Y_0}$, and it is independent of the rest of random input parameters of~\eqref{problem}, then the mean square solution $X(t)$ has for probability density function
\begin{equation}
 f_{X(t)}(x)=\mathbb{E}\left[f_{Y_0}\left(\frac{x-Y_1 S_1(t)}{S_0(t)}\right)\frac{1}{|S_0(t)|}\right]. 
 \label{dens_Xt}
\end{equation}
\end{theorem}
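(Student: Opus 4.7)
The plan is to reduce Theorem~\ref{the_densitat} to a direct application of Lemma~\ref{convo} once the solution has been rewritten via Theorem~\ref{nova_expr}. By Theorem~\ref{nova_expr}, the mean square analytic solution decomposes as $X(t)=Y_0 S_0(t)+Y_1 S_1(t)$ on $(t_0-r,t_0+r)$, where the random processes $S_0(t)$ and $S_1(t)$ are constructed from the data $\{A_n,B_n\}_{n\geq 0}$ through the recursions recalled right after Theorem~\ref{nova_expr}, with \emph{deterministic} initial conditions. This structural observation is what makes the independence hypothesis of Lemma~\ref{convo} available.

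Next, I would set $U\coloneqq Y_0$, $Z_1\coloneqq S_0(t)$ and $Z_2\coloneqq Y_1 S_1(t)$, so that $X(t)=Z_1 U+Z_2$, and verify the three assumptions of Lemma~\ref{convo}. Absolute continuity of $U$ with density $f_{Y_0}$ is part of the hypotheses, and $Z_1\neq 0$ almost surely is exactly the assumption $S_0(t)\neq 0$ a.s.\ imposed in the statement. The remaining and only nontrivial point is the independence of $U$ from $(Z_1,Z_2)$. Once this is granted, Lemma~\ref{convo} immediately gives $f_{X(t)}(x)=\mathbb{E}\bigl[f_{Y_0}((x-Y_1 S_1(t))/S_0(t))/|S_0(t)|\bigr]$, which is~\eqref{dens_Xt}.

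To establish independence, I would argue that $S_0(t)$ and $S_1(t)$ are measurable with respect to the $\sigma$-algebra $\sigma(\{A_n,B_n\}_{n\geq 0})$. Indeed, each truncation $S_0^N(t)=\sum_{n=0}^N S_{0,n}(t-t_0)^n$ is a fixed polynomial in finitely many of the $A_i,B_i$, hence measurable with respect to that $\sigma$-algebra, and the $\leb^\infty(\Omega)$ (in particular, mean square) convergence $S_0^N(t)\to S_0(t)$ from Theorem~\ref{nostre} applied with the deterministic initial conditions yields an almost sure convergent subsequence, so that $S_0(t)$ inherits the measurability; the same reasoning applies to $S_1(t)$. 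Consequently, the pair $(S_0(t),Y_1 S_1(t))$ is measurable with respect to $\sigma(\{A_n,B_n,Y_1\}_{n\geq 0})$, a $\sigma$-algebra that is independent of $Y_0$ by the hypothesis that $Y_0$ is independent of the remaining random input parameters of~\eqref{problem}. The main (minor) obstacle is precisely this measurability step: one has to be careful that the random power series defining $S_0(t)$ and $S_1(t)$ involve infinitely many inputs and are obtained as an $\leb^\infty(\Omega)$ limit, but passing to the a.s.\ limit of the partial sums identifies the generating $\sigma$-algebra and closes the argument.
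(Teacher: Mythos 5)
Your proposal is correct and follows exactly the route the paper takes: the paper proves Theorem~\ref{the_densitat} as a ``straightforward consequence'' of Lemma~\ref{convo} applied to the decomposition $X(t)=Y_0S_0(t)+Y_1S_1(t)$ from Theorem~\ref{nova_expr}, with $U=Y_0$, $Z_1=S_0(t)$, $Z_2=Y_1S_1(t)$. Your explicit verification of the independence hypothesis via the measurability of $S_0(t)$ and $S_1(t)$ with respect to $\sigma(\{A_n,B_n\}_{n\ge 0})$ is a sound elaboration of a step the paper leaves implicit.
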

An important issue with expression~\eqref{dens_Xt} is that $S_0(t)$ and $S_1(t)$ are given by infinite series, therefore truncated approximations are needed. We have to justify that it is legitimate to reduce dimensionality and use truncated random power series for $S_0(t)$ and $S_1(t)$. In what follows, we denote by $S_0^N(t)$ and $S_1^N(t)$ the $N$-th partial sums of $S_0(t)$ and $S_1(t)$, respectively, which converge in $\leb^\infty(\Omega)$ for each $t$. Let $X^N(t)=Y_0 S_0^N(t)+Y_1 S_1^N(t)$ be a truncation of the solution $X(t)$, which converges in the mean square sense for each time $t$.

In the study of random differential equation problems with no closed-form expression of the solution process but only an infinite expansion, one usually constructs an approximating sequence of stochastic processes with reduced dimensionality and computable probability density function. Thus, one obtains an approximating sequence of probability density functions, which hopefully presents rapid convergence to the target density function. Moreover, the discontinuity and non-differentiability points of the target density function are captured with no difficulty. In the literature, one may find applications of this type of strategy with power series and Karhunen-Lo\`eve developments \cite{pendol,parabolic}, finite difference schemes \cite{tawil}, and PC expansions~\cite{symmetry}.

If $S_0^N(t)\neq0$ almost surely, the probability density function of $X^N(t)$ is
\begin{equation} f_{X^N(t)}(x)=\mathbb{E}\left[f_{Y_0}\left(\frac{x-Y_1 S_1^N(t)}{S_0^N(t)}\right)\frac{1}{|S_0^N(t)|}\right]. 
 \label{dens_Xnt}
\end{equation}
This expression involves a maximum of $2N+3$ random variables ($Y_1$, $S_{0,n}$ and $S_{1,n}$ for $0\leq n\leq N$). Thus, the expectation can be computed by numerical integration (in the case of absolutely continuous random input coefficients), or by a Monte Carlo procedure \cite[pp.~53--54]{xiu_llibre}, by sampling realizations of $Y_1$, $S_0^N(t)$ and $S_1^N(t)$. This is the same strategy as the one followed in our recent paper~\cite{pendol}. The approach based on numerical integration would be feasible only in the case of small $N$ and $A(t)=p(t;D)$, $B(t)=q(t;D)$ ($D$ random), as in~\cite{ana}, otherwise it is impractical. This is because the integration dimension relies on the dimension of the random space (the total number of input random variables). The Monte Carlo strategy can cope with high uncertainty dimension and large $N$, albeit at the expense of introducing a statistical error due to sampling, in addition to the bias error. The sampling error is reduced as the number of realizations increases, but at the cost of higher computational burden.

We need to justify that, for each $t$, $\lim_{N\rightarrow\infty} f_{X^N(t)}(x)=f_{X(t)}(x)$, for $x\in\mathbb{R}$. This is a strong mode of convergence. Indeed, as we are working with density functions, almost everywhere convergence on $\mathbb{R}$ implies convergence in $\leb^1(\mathbb{R})$, due to Scheff\'e's lemma \cite[p.~55]{williams}, \cite{scheffe}. This lemma states that if a general sequence of integrable functions converges almost everywhere to another integrable function, then convergence in $\leb^1(\mathbb{R})$ is equivalent to convergence of the $\leb^1(\mathbb{R})$ norms. As density functions have $\leb^1(\mathbb{R})$ norms equal to $1$ by definition, we deduce that almost everywhere convergence of the density functions $f_{X^N(t)}(x)$ to $f_{X(t)}(x)$ as $N\rightarrow\infty$ implies that $\| f_{X(t)}-f_{X^N(t)} \|_{\leb^1(\mathbb{R})}=\int_{\mathbb{R}} |f_{X(t)}(x)-f_{X^N(t)}(x)|\,\dif x \rightarrow 0$ as $N\rightarrow\infty$. Convergence in $\leb^1(\mathbb{R})$ is also referred to as convergence in total variation \cite[p.~41]{totalvar}: $\| (\mathbb{P}\circ (X^N(t))^{-1})- (\mathbb{P}\circ (X(t))^{-1})\|_{\mathrm{TV}}\coloneqq\sup\{ |\mathbb{P}[X^N(t)\in F]-\mathbb{P}[X(t)\in F]|:F\in\mathcal{F}\}= \frac12 \| f_{X^N(t)}-f_{X(t)} \|_{\leb^1(\mathbb{R})}$.
It is also equivalent to convergence in terms of the Hellinger distance~\cite{hellinger},
$
 H( \mathbb{P}\circ (X^N(t))^{-1},\mathbb{P}\circ (X(t))^{-1})\coloneqq 
 % {} & \sqrt{\frac12 \int_{\mathbb{R}} \left(\sqrt{f_{X^N(t)}(x)}-\sqrt{f_{X(t)}(x)}\right)^2\,\dif x} \\
 (1/\sqrt{2}) \| f_{X^N(t)}^{1/2}-f_{X(t)}^{1/2}\|_{\leb^2(\mathbb{R})}$,
via the elementary inequalities $H^2\leq \|\cdot\|_{\mathrm{TV}}\leq \sqrt{2}\,H$.

In fact, convergence in $\leb^1(\mathbb{R})$ may be generalized to convergence in $\leb^p(\mathbb{R})$, for $1<p<\infty$, by imposing boundedness on $\mathbb{R}$ of $f_{Y_0}$. Indeed, in this case, taking a constant $C>0$ such that $|f_{X^N(t)}(x)|\leq C$ and $|f_{X(t)}(x)|\leq C$, for $N\geq0$, $t$ and $x\in\mathbb{R}$, the mean value theorem leads to $| \|f_{X^N(t)}\|_{\leb^p(\mathbb{R})}^p-\|f_{X(t)}\|_{\leb^p(\mathbb{R})}^p|\leq p\, C^{p-1} \| f_{X^N(t)}-f_{X(t)} \|_{\leb^1(\mathbb{R})}$,
therefore $\|f_{X^N(t)}\|_{\leb^p(\mathbb{R})} \rightarrow \|f_{X(t)}\|_{\leb^p(\mathbb{R})}$ as $N\rightarrow\infty$. By Scheff\'e's lemma, there is convergence in $\leb^p(\mathbb{R})$: $\| f_{X(t)}-f_{X^N(t)} \|_{\leb^p(\mathbb{R})}=(\int_{\mathbb{R}} |f_{X(t)}(x)-f_{X^N(t)}(x)|^p\,\dif x)^{1/p} \rightarrow0$ as $N\rightarrow\infty$.

The pointwise convergence is the object of the following important Theorem~\ref{te1}. The result is proved in the spirit of our contribution~\cite{pendol}, by utilizing the concept of Nemytskii operator \cite[Remark~2.6]{pendol}, \cite[pp.~15--17]{ambrosetti}, \cite[pp.~154--163]{vainberg}.

\begin{lemma} \label{lemma_nemyt}
Let $\{V_N\}_{N=1}^\infty$ be a sequence of random variables that converges to $V$ in $\leb^2(\Omega)$. If $\mathbb{P}[V\in \mathcal{D}_{f_{Y_0}}]=0$, where $\mathcal{D}_{f_{Y_0}}$ is the set of discontinuity points of $f_{Y_0}$, and if $f_{Y_0}(y)\leq \alpha+\beta y^2$, for certain constants $\alpha,\beta\geq0$, then $f_{Y_0}(V_N)\rightarrow f_{Y_0}(V)$ as $N\rightarrow\infty$ in $\leb^1(\Omega)$.
\end{lemma}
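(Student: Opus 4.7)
The plan is to combine the almost sure continuity of $f_{Y_0}$ at $V$ with a uniform integrability argument, and then upgrade subsequential a.s. convergence to $\leb^1(\Omega)$ convergence via Vitali's theorem and a standard subsequence trick.

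First I would observe the growth condition immediately gives the pointwise bound
\[
 |f_{Y_0}(V_N)|\leq \alpha + \beta V_N^2,\qquad |f_{Y_0}(V)|\leq \alpha+\beta V^2.
\]
Since $V_N\to V$ in $\leb^2(\Omega)$, the sequence $\{V_N^2\}$ converges to $V^2$ in $\leb^1(\Omega)$ by Cauchy--Schwarz, because $\|V_N^2-V^2\|_1\leq \|V_N-V\|_2\|V_N+V\|_2$ and $\|V_N+V\|_2$ is bounded. A sequence convergent in $\leb^1$ is uniformly integrable, so $\{V_N^2\}$ is uniformly integrable and therefore so is $\{f_{Y_0}(V_N)\}$.

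Next I would use the standard subsequence principle: to prove $f_{Y_0}(V_N)\to f_{Y_0}(V)$ in $\leb^1(\Omega)$, it suffices to show that every subsequence admits a further subsequence converging in $\leb^1(\Omega)$ to the same limit. Given any subsequence of $\{V_N\}$, it still converges to $V$ in $\leb^2(\Omega)$, so I can extract a further subsequence $\{V_{N_k}\}$ with $V_{N_k}\to V$ almost surely. On the event $\{V\notin \mathcal{D}_{f_{Y_0}}\}$, which has full probability by hypothesis, $f_{Y_0}$ is continuous at $V(\omega)$, hence $f_{Y_0}(V_{N_k}(\omega))\to f_{Y_0}(V(\omega))$. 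Thus $f_{Y_0}(V_{N_k})\to f_{Y_0}(V)$ almost surely.

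Finally I would invoke Vitali's convergence theorem: almost sure convergence together with uniform integrability yields $\leb^1(\Omega)$ convergence, so $f_{Y_0}(V_{N_k})\to f_{Y_0}(V)$ in $\leb^1(\Omega)$. Since this works for any initial subsequence, the whole sequence $f_{Y_0}(V_N)$ converges to $f_{Y_0}(V)$ in $\leb^1(\Omega)$, completing the proof. The main delicate point is verifying uniform integrability of the dominating sequence $\{V_N^2\}$; once the $\leb^1$-convergence of $V_N^2$ to $V^2$ is justified by the Cauchy--Schwarz estimate above, the rest of the argument is a routine application of Vitali's theorem and the hypothesis that the discontinuity set of $f_{Y_0}$ is $V$-negligible.
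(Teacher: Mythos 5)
Your proof is correct. The overall skeleton coincides with the paper's: both arguments reduce the claim, via the subsequence principle, to showing that along a suitable sub-subsequence one has almost sure convergence of $f_{Y_0}(V_{N_k})$ to $f_{Y_0}(V)$ (using the hypothesis $\mathbb{P}[V\in\mathcal{D}_{f_{Y_0}}]=0$ and continuity of $f_{Y_0}$ off its discontinuity set), and then upgrade this to $\leb^1(\Omega)$ convergence. The one genuine difference lies in how that last upgrade is performed. The paper extracts, together with the a.s.\ convergent subsequence, a dominating random variable $\overline{V}\in\leb^2(\Omega)$ (the refinement of the Riesz--Fischer theorem in Brezis, Th.~4.9) and applies the dominated convergence theorem to the bound $f_{Y_0}(V_{N_{k_l}})\leq\alpha+\beta\overline{V}^2$. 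You instead establish uniform integrability of the whole family $\{f_{Y_0}(V_N)\}_N$ up front --- via the Cauchy--Schwarz estimate $\|V_N^2-V^2\|_1\leq\|V_N-V\|_2\,\|V_N+V\|_2$, which shows $V_N^2\to V^2$ in $\leb^1(\Omega)$ and hence that $\{V_N^2\}$ is uniformly integrable --- and then invoke Vitali's convergence theorem. Both routes are valid; yours avoids the domination lemma at the cost of the (routine but necessary) verification that a family dominated by a constant plus a uniformly integrable family is itself uniformly integrable, and it has the small structural advantage that the integrability control is obtained once for the full sequence rather than re-extracted for each subsequence.
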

\begin{proof}
There is a result in point-set topology that states that, given a sequence, if every subsequence has a subsequence itself that converges to $z_0$, then the complete sequence converges to $z_0$. This follows by a simple contradiction argument. Thus, it suffices to prove that, for every subsequence $\{V_{N_k}\}_{k=1}^\infty$, there exists a subsequence $\{V_{N_{k_l}}\}_{l=1}^\infty$ such that $\lim_{l\rightarrow\infty} f_{Y_0}(V_{N_{k_l}})=f_{Y_0}(V)$ in $\leb^1(\Omega)$. Fix any subsequence $\{V_{N_k}\}_{k=1}^\infty$. Since $\lim_{k\rightarrow\infty} V_{N_k}=V$ in $\leb^2(\Omega)$, by \cite[Th.~4.9]{brezis} there exist a subsequence $\{V_{N_{k_l}}\}_{l=1}^\infty$ and a random variable $\overline{V}\in\leb^2(\Omega)$ such that $\lim_{l\rightarrow\infty} V_{N_{k_l}}(\omega)=V(\omega)$ and $|V_{N_{k_l}}(\omega)|\leq \overline{V}(\omega)$ almost surely. Since $\mathbb{P}[V\in \mathcal{D}_{f_{Y_0}}]=0$, the continuous mapping theorem \cite[p.~7, Th.~2.3]{vaart} guarantees that $\lim_{l\rightarrow\infty} f_{Y_0}(V_{N_{k_l}}(\omega))=f_{Y_0}(V(\omega))$ almost surely. As $f_{Y_0}(V_{N_{k_l}}(\omega))\leq\alpha+\beta (V_{N_{k_l}}(\omega))^2\leq \alpha+\beta (\overline{V}(\omega))^2\in\leb^1(\Omega)$, we can apply the dominated convergence theorem to conclude that the desired limit holds: $\lim_{l\rightarrow\infty} f_{Y_0}(V_{N_{k_l}})=f_{Y_0}(V)$ in $\leb^1(\Omega)$.
\end{proof}

\begin{remark}
As $S_0(t_0)=1$ and $S_0(t)$ is continuous in $\leb^\infty(\Omega)$, we can find a neighborhood of $t_0$, say $(t_0-\delta,t_0+\delta)$ for certain $\delta>0$, such that $\|S_0(t)-1\|_\infty<1/4$ for all $t\in (t_0-\delta,t_0+\delta)$. Hence, $S_0(t)>3/4>0$ almost surely, for $t\in (t_0-\delta,t_0+\delta)$. Notice that such neighborhood may be limited; for instance, the deterministic function $X(t)=\sin t$ satisfies $\ddot{X}(t)+X(t)=0$, $X(t_0=\pi/2)=1$ and $\dot{X}(t_0=\pi/2)=0$.

For $t\in (t_0-\delta,t_0+\delta)$ fixed, there exists an integer $N_t>0$ such that $\|S_0^N(t)-S_0(t)\|_\infty<1/4$, for all $N\geq N_t$. Then $\|S_0^N(t)-1\|_\infty \leq \|S_0^N(t)-S_0(t)\|_\infty+\|S_0(t)-1\|_\infty<1/2$. This implies that $S_0^N(t)>1/2$ almost surely, $N\geq N_t$. From now on, we will work with $t\in(t_0-\delta,t_0+\delta)$.
\end{remark}

\begin{theorem} \label{te1}
Suppose the conditions of Theorem~\ref{the_densitat}. If $f_{Y_0}$ is continuous on $\mathbb{R}$ and $f_{Y_0}(y)\leq \alpha+\beta y^2$, for certain constants $\alpha,\beta\geq0$, then $\lim_{N\rightarrow\infty} f_{X^N(t)}(x)=f_{X(t)}(x)$, for each $t\in (t_0-\delta,t_0+\delta)$ and for every $x\in\mathbb{R}$.
\end{theorem}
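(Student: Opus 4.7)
The plan is to reduce the statement to Lemma~\ref{lemma_nemyt} applied to the auxiliary random variables $V_N := (x - Y_1 S_1^N(t))/S_0^N(t)$ and $V := (x - Y_1 S_1(t))/S_0(t)$, after exploiting the uniform positive lower bound on $S_0^N(t)$ and $S_0(t)$ granted by the preceding remark. Fixing $t \in (t_0-\delta, t_0+\delta)$ and $x \in \mathbb{R}$, I will rewrite the two densities as $f_{X^N(t)}(x) = \mathbb{E}[f_{Y_0}(V_N)/|S_0^N(t)|]$ and $f_{X(t)}(x) = \mathbb{E}[f_{Y_0}(V)/|S_0(t)|]$, so that the target convergence becomes an $\leb^1(\Omega)$-limit of a product.

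First, I will use the remark just above the statement to secure, for $N$ large enough, $S_0(t) \geq 3/4$ and $S_0^N(t) \geq 1/2$ almost surely, which gives uniform-in-$(N,\omega)$ bounds on $1/|S_0^N(t)|$ and $1/|S_0(t)|$. Second, I will establish $V_N \to V$ in $\leb^2(\Omega)$ from the decomposition
\[ V_N - V = \frac{(x - Y_1 S_1^N(t))(S_0(t) - S_0^N(t))}{S_0^N(t)\, S_0(t)} + \frac{Y_1(S_1(t) - S_1^N(t))}{S_0(t)}, \]
together with the $\leb^\infty(\Omega)$-convergence $S_0^N(t) \to S_0(t)$ and $S_1^N(t) \to S_1(t)$, the uniform bounds on the denominators just obtained, and $Y_1 \in \leb^2(\Omega)$.

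Third, Lemma~\ref{lemma_nemyt} applies: continuity of $f_{Y_0}$ on $\mathbb{R}$ makes $\mathcal{D}_{f_{Y_0}} = \emptyset$, so the hypothesis $\mathbb{P}[V \in \mathcal{D}_{f_{Y_0}}] = 0$ is automatic; together with the bound $f_{Y_0}(y) \leq \alpha + \beta y^2$ and $V_N \to V$ in $\leb^2(\Omega)$, this yields $f_{Y_0}(V_N) \to f_{Y_0}(V)$ in $\leb^1(\Omega)$. To close the argument I will split
\[ \frac{f_{Y_0}(V_N)}{|S_0^N(t)|} - \frac{f_{Y_0}(V)}{|S_0(t)|} = \frac{f_{Y_0}(V_N) - f_{Y_0}(V)}{|S_0^N(t)|} + f_{Y_0}(V)\left(\frac{1}{|S_0^N(t)|} - \frac{1}{|S_0(t)|}\right); \]
the $\leb^1(\Omega)$-norm of the first summand vanishes by the uniform bound on $1/|S_0^N(t)|$, and the second is controlled by $\bigl\||S_0^N(t)|^{-1} - |S_0(t)|^{-1}\bigr\|_\infty \leq (8/3)\|S_0^N(t) - S_0(t)\|_\infty \to 0$ combined with $\mathbb{E}[f_{Y_0}(V)] \leq \alpha + \beta\|V\|_2^2 < \infty$. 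Taking expectations then yields $f_{X^N(t)}(x) \to f_{X(t)}(x)$.

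The main obstacle I foresee is Step~2: the division by $S_0^N(t)$ is only legitimate thanks to the uniform positive lower bound supplied by the remark on the neighborhood $(t_0-\delta, t_0+\delta)$. This is precisely why the theorem is restricted to such a neighborhood; outside it the random denominators could approach zero with positive probability and the $\leb^2(\Omega)$-estimate on $V_N - V$ would break down. The remaining steps are a fairly routine application of dominated convergence packaged inside Lemma~\ref{lemma_nemyt}.
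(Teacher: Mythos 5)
Your proposal is correct and follows essentially the same route as the paper's proof: fix $t$ and $x$, show $V_N\to V$ in $\leb^2(\Omega)$ using the lower bounds on $S_0^N(t)$ and the $\leb^\infty(\Omega)$ convergence of the partial sums, invoke Lemma~\ref{lemma_nemyt} (with $\mathcal{D}_{f_{Y_0}}=\emptyset$) to get $f_{Y_0}(V_N)\to f_{Y_0}(V)$ in $\leb^1(\Omega)$, and then pass the factor $1/|S_0^N(t)|$ through before taking expectations. The only difference is that you spell out the decompositions that the paper dismisses as ``easy consequences,'' which is fine.
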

\begin{proof}
Fix $t\in (t_0-\delta,t_0+\delta)$ and $x\in\mathbb{R}$. Let 
\begin{equation} V_N=\frac{x-Y_1 S_1^N(t)}{S_0^N(t)},\quad V=\frac{x-Y_1 S_1(t)}{S_0(t)} 
\label{VN}
\end{equation}
(here we drop the explicit dependencies of $V_N$ and $V$ on $t$ and $x$). First, notice that $V_N\rightarrow V$ as $N\rightarrow\infty$ in $\leb^2(\Omega)$, as an easy consequence of the following facts: $S_0^N(t)>1/2$ almost surely, for all $N\geq N_t$, $S_0^N(t)\rightarrow S_0(t)$ and $S_1^N(t)\rightarrow S_1(t)$ as $N\rightarrow\infty$ in $\leb^\infty(\Omega)$, and $Y_1\in\leb^2(\Omega)$. 

The conditions imposed on $f_{Y_0}$ imply that the Nemytskii operator $V\mapsto f_{Y_0}(V)$ is continuous from $\leb^2(\Omega)$ to $\leb^1(\Omega)$, by Lemma~\ref{lemma_nemyt}. Hence, $\lim_{N\rightarrow\infty} f_{Y_0}(V_N)\rightarrow f_{Y_0}(V)$ in $\leb^1(\Omega)$. Since $S_0^N(t)>1/2$ almost surely, for all $N\geq N_t$, and $\lim_{N\rightarrow\infty} S_0^N(t)\rightarrow S_0(t)$ in $\leb^\infty(\Omega)$, we deduce that $f_{Y_0}(V_N)/S_0^N(t)\rightarrow f_{Y_0}(V)/S_0(t)$ as $N\rightarrow\infty$ in $\leb^1(\Omega)$. 

In particular, the sequence of expectations, $f_{X^N(t)}(x)=\mathbb{E}[f_{Y_0}(V_N)/S_0^N(t)]$, converges to the density $f_{X(t)}(x)=\mathbb{E}[f_{Y_0}(V)/S_0(t)]$, which completes the proof.
\end{proof}

In Section~\ref{num_examples}, the application of Theorem~\ref{te1} will be illustrated numerically on Examples~\ref{example1}--\ref{example2}. 

\begin{remark} \label{rmk_conv_var}
Having $\lim_{N\rightarrow\infty} f_{Y_0}(V_N)/S_0^N(t)=f_{Y_0}(V)/S_0(t)$ in $\leb^1(\Omega)$ assures the convergence of the expectations. If convergence of the variances is also needed, one needs to extend the convergence to $\leb^2(\Omega)$. In this case, the boundedness condition on $f_{Y_0}$ should be $f_{Y_0}(y)\leq \alpha+\beta |y|$ (apply an analogous proof to Lemma~\ref{lemma_nemyt}).
\end{remark}

\begin{remark}[Rate of convergence of the approximating density functions] \label{rmk_rate}
Notice that, under the conditions of Theorem~\ref{the_densitat}, if $f_{Y_0}$ is Lipschitz continuous on $\mathbb{R}$ (this assumption is stronger than the hypotheses of Theorem~\ref{te1}), then $f_{X^N(t)}(x)$ converges with $N$ exponentially to $f_{X(t)}(x)$, for $t\in (t_0-\delta,t_0+\delta)$ and $x\in\mathbb{R}$. This is because the Lipschitz condition allows for estimating $|f_{X^N(t)}(x)-f_{X(t)}(x)|$ via the following inequality: 
\[ |f_{X^N(t)}(x)-f_{X(t)}(x)|\leq C_t\left((|x|+1)\|S_0^N(t)-S_0(t)\|_\infty+\|Y_1\|_2\|S_1^N(t)-S_1(t)\|_\infty\right), \]
and as discussed in Section~\ref{stoch_solution}, the Fr\"obenius method converges exponentially. In the previous expression, $C_t$ is a constant depending on $t$. Unfortunately, the exponential convergence rate is not uniform with $t$ and $x$. As $|t-t_0|$ grows, one needs to increase $N$ to maintain the accuracy. The same occurs with $|x|$, which increases the bias error $\|S_0^N(t)-S_0(t)\|_\infty$ linearly.

In general, if $f_{Y_0}$ is $\gamma$-H\"older continuous on $\mathbb{R}$ with exponent $0<\gamma\leq 1$ (the case $\gamma=1$ corresponds to Lipschitz continuity), then 
\small
\[ |f_{X^N(t)}(x)-f_{X(t)}(x)|\leq C_t\left\{\|S_0^N(t)-S_0(t)\|_\infty\right.+ \left.\left(|x|\|S_0^N(t)-S_0(t)\|_\infty+\|Y_1\|_2\|S_1^N(t)-S_1(t)\|_\infty\right)^\gamma\right\}. \]
\normalsize
The same conclusion on the convergence holds in this case.
\end{remark}

Notice that the regularity of $f_{X^N(t)}(x)$ is inherited from $f_{Y_0}(y)$. These ideas are formalized in the following theorem:

\begin{theorem} \label{te2}
Under the assumptions of Theorem~\ref{te1}, if $f_{Y_0}$ is $C^1(\mathbb{R})$ with bounded derivative on $\mathbb{R}$, then $f_{X^N(t)}(x)$ and $f_{X(t)}(x)$ are $C^1(\mathbb{R})$, with bounded derivatives, and $f_{X^N(t)}'(x)\rightarrow f_{X(t)}'(x)$ as $N\rightarrow\infty$, for each $t\in (t_0-\delta,t_0+\delta)$ and for every $x\in\mathbb{R}$.
\end{theorem}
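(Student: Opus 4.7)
The plan is to justify differentiation under the expectation in \eqref{dens_Xt} and \eqref{dens_Xnt} via the dominated convergence theorem (DCT) for parameter-dependent integrals, and then to chain together DCT arguments to get continuity, boundedness, and pointwise convergence of the derivatives. The key structural observation is that on $(t_0-\delta,t_0+\delta)$ the remark preceding Theorem~\ref{te1} yields $S_0(t)>3/4$ and $S_0^N(t)>1/2$ almost surely for $N\geq N_t$, so $|S_0(t)|=S_0(t)$ and $|S_0^N(t)|=S_0^N(t)$, and moreover $1/S_0(t)^2$ and $1/S_0^N(t)^2$ are uniformly bounded (by $16/9$ and $4$, respectively) almost surely.

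First I would differentiate the integrand of \eqref{dens_Xt} in $x$:
\[
\frac{\partial}{\partial x}\left[f_{Y_0}\!\left(\frac{x-Y_1 S_1(t)}{S_0(t)}\right)\frac{1}{S_0(t)}\right]
= f_{Y_0}'\!\left(\frac{x-Y_1 S_1(t)}{S_0(t)}\right)\frac{1}{S_0(t)^2},
\]
and set $M:=\sup_{y\in\mathbb{R}}|f_{Y_0}'(y)|<\infty$. The integrand is then dominated almost surely by the constant $(16/9)M$, uniformly in $x$. DCT for differentiation yields
\[
f_{X(t)}'(x)=\mathbb{E}\!\left[f_{Y_0}'\!\left(\frac{x-Y_1 S_1(t)}{S_0(t)}\right)\frac{1}{S_0(t)^2}\right],
\]
and the same computation with the $N$-th truncation (dominated by $4M$) gives the analogous expression for $f_{X^N(t)}'(x)$. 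Boundedness of both derivatives on $\mathbb{R}$ is immediate from these uniform constants. For continuity of $f_{X(t)}'$, fix $x$ and take $x_k\to x$: the continuity of $f_{Y_0}'$ makes the integrands converge pointwise in $\omega$, and the same constant dominates them, so DCT yields $f_{X(t)}'(x_k)\to f_{X(t)}'(x)$; identically for $f_{X^N(t)}'$. This shows both densities are $C^1(\mathbb{R})$ with bounded derivatives.

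For the pointwise convergence $f_{X^N(t)}'(x)\to f_{X(t)}'(x)$ I would recycle the subsequence argument from Lemma~\ref{lemma_nemyt}. With $V_N$, $V$ as in \eqref{VN}, the proof of Theorem~\ref{te1} already establishes $V_N\to V$ in $\leb^2(\Omega)$; additionally $S_0^N(t)\to S_0(t)$ in $\leb^\infty(\Omega)$, hence $1/S_0^N(t)^2\to 1/S_0(t)^2$ in $\leb^\infty(\Omega)$. Given any subsequence, extract a further subsequence along which $V_{N_{k_l}}\to V$ almost surely. By continuity of $f_{Y_0}'$ we get $f_{Y_0}'(V_{N_{k_l}})\to f_{Y_0}'(V)$ almost surely, and combined with the uniform bound $4M$ for the full product $f_{Y_0}'(V_{N_{k_l}})/S_0^{N_{k_l}}(t)^2$, DCT gives $\leb^1(\Omega)$ convergence to $f_{Y_0}'(V)/S_0(t)^2$. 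Taking expectations and invoking the subsequence characterization of convergence yields $f_{X^N(t)}'(x)\to f_{X(t)}'(x)$.

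The main technical point (though hardly an obstacle) is ensuring that the dominating functions in every DCT application are uniform in both $N$ and $x$, which is exactly what the a.s. lower bound $S_0^N(t)>1/2$ and the global bound $M$ on $|f_{Y_0}'|$ deliver. Once this is in place, the proof is a triple application of DCT with no further subtleties, and the only new ingredient beyond Theorem~\ref{te1} is the legitimacy of differentiating under the expectation sign.
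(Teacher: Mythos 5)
Your proof is correct and follows essentially the same route as the paper: differentiation under the expectation justified by the bounded derivative of $f_{Y_0}$ and the a.s.\ lower bounds $S_0(t)>3/4$, $S_0^N(t)>1/2$, followed by the convergence of $\mathbb{E}[f_{Y_0}'(V_N)/S_0^N(t)^2]$ to $\mathbb{E}[f_{Y_0}'(V)/S_0(t)^2]$. The only difference is presentational: where the paper cites Klenke for the differentiation step and invokes the Nemytskii-operator continuity of Lemma~\ref{lemma_nemyt} applied to $f_{Y_0}'$, you re-derive both via explicit dominated-convergence and subsequence arguments (which is precisely the content of that lemma), and you additionally spell out the continuity of the derivatives, which the paper leaves implicit.
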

\begin{proof}
Fix $t\in (t_0-\delta,t_0+\delta)$. The following facts permit differentiating under the expectation operator that defines $f_{X^N(t)}(x)$ and $f_{X(t)}(x)$ \cite[p.~142]{klenke}: $f_{Y_0}$ is differentiable with bounded derivative, and $S_0^N(t)>1/2$ almost surely for all $N\geq N_t$. Hence, 
\[ f_{X^N(t)}'(x)=\mathbb{E}\left[f_{Y_0}'\left(\frac{x-Y_1 S_1^N(t)}{S_0^N(t)}\right)\frac{1}{\left(S_0^N(t)\right)^2}\right], \; 
f_{X(t)}'(x)=\mathbb{E}\left[f_{Y_0}'\left(\frac{x-Y_1 S_1(t)}{S_0(t)}\right)\frac{1}{\left(S_0(t)\right)^2}\right]. \]

The continuity and boundedness conditions imposed on $f_{Y_0}'$ entail that the Nemytskii operator $V\mapsto f_{Y_0}'(V)$ is continuous from $\leb^2(\Omega)$ to $\leb^1(\Omega)$, by Lemma~\ref{lemma_nemyt}. Thereby, as in the proof of Theorem~\ref{te1}, we deduce that $\lim_{N\rightarrow\infty} f_{X^N(t)}'(x)=f_{X(t)}'(x)$, $x\in\mathbb{R}$.
\end{proof}

\begin{remark} \label{rmk_Y1}
It is important to realize that the previous theory works exchanging the role of $Y_1$ and $Y_0$. Indeed, even though $S_1(t_0)=0$, in contrast with $S_0(t_0)=1$, we do have that $\dot{S}_1(t_0)=1$. We may choose a neighborhood of $t_0$, say $(t_0-\mu,t_0+\mu)$ for certain $\mu>0$, such that $\dot{S}_1(t)>3/4$ almost surely, for $t\in (t_0-\mu,t_0+\mu)$. We know that, in the sense of $\leb^\infty(\Omega)$, $S_1(t)=\int_{t_0}^t \dot{S}_1(r)\,\dif r$. Then $|S_1(t)|>\frac34|t-t_0|=m_t$ almost surely, for $t\in (t_0-\mu,t_0+\mu)$. In particular, as $m_t>0$ for $t\in (t_0-\mu,t_0+\mu)\backslash\{t_0\}$, the previous proofs work with $Y_1$ in place of $Y_0$. The previous theoretical results may be restated in a completely analogous fashion, as
\[ f_{X(t)}(x)=\mathbb{E}\left[f_{Y_1}\left(\frac{x-Y_0 S_0(t)}{S_1(t)}\right)\frac{1}{|S_1(t)|}\right],\quad f_{X^N(t)}(x)=\mathbb{E}\left[f_{Y_1}\left(\frac{x-Y_0 S_0^N(t)}{S_1^N(t)}\right)\frac{1}{|S_1^N(t)|}\right], \]
for $t\in (t_0-\mu,t_0+\mu)\backslash\{t_0\}$. In this case, one requires $Y_1$ to have an absolutely continuous probability law, with density function $f_{Y_1}$, and to be independent of the rest of the random input parameters in~\eqref{problem}. For convergence, one imposes continuity for $f_{Y_1}$ on $\mathbb{R}$ and boundedness $f_{Y_1}(y)\leq \alpha+\beta y^2$, for certain constants $\alpha,\beta\geq0$. If $f_{Y_1}$ is Lipschitz continuous on $\mathbb{R}$, then an exponential convergence holds. Finally, if $f_{Y_1}$ is also $C^1(\mathbb{R})$ with bounded derivative on $\mathbb{R}$, then both $f_{X^N(t)}(x)$ and $f_{X(t)}(x)$ are $C^1(\mathbb{R})$, with bounded derivative, and the sequence of derivatives converges. These cases are considered in Example~\ref{example3}.
\end{remark}

The continuity condition on $\mathbb{R}$ imposed in Theorem~\ref{te1} is somewhat restrictive, as we do not allow some common probability distributions for $Y_0$ whose density function possesses discontinuity points, such as the Uniform, Exponential or general truncated distributions. Notice that this assumption in Theorem~\ref{te1} may be relaxed to almost everywhere continuity on $\mathbb{R}$, by adding absolute continuity on $Y_1$. This fact is a consequence of the continuous mapping theorem \cite[p.~7, Th.~2.3]{vaart}. Indeed, for $t\in (t_0-\min\{\delta,\mu\},t_0+\min\{\delta,\mu\})\backslash\{t_0\}$, as $|S_1(t)|>m_t>0$ almost surely and $Y_1$ is absolutely continuous, then $V=\frac{x-Y_1 S_1(t)}{S_0(t)}$ is absolutely continuous, by Lemma~\ref{convo}. Therefore, the probability that $V$ lies in the discontinuity set of $f_{Y_0}$ is $0$. This assures that $f_{Y_0}(V_N)\rightarrow f_{Y_0}(V)$ in $\leb^1(\Omega)$ as $N\rightarrow\infty$, by Lemma~\ref{lemma_nemyt}. 

The precise restatement of Theorem~\ref{te1} is the following:
 
\begin{theorem} \label{te1super}
Suppose the conditions of Theorem~\ref{the_densitat}. If $f_{Y_0}$ is almost everywhere continuous on $\mathbb{R}$, $f_{Y_0}(y)\leq \alpha+\beta y^2$ for certain constants $\alpha,\beta\geq0$, $Y_1$ is absolutely continuous, and $Y_1$ is independent of $(A,B)$, then $\lim_{N\rightarrow\infty} f_{X^N(t)}(x)=f_{X(t)}(x)$, for $t\in (t_0-\min\{\delta,\mu\},t_0+\min\{\delta,\mu\})\backslash\{t_0\}$ and for every $x\in\mathbb{R}$. 
\end{theorem}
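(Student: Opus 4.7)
The plan is to follow the proof of Theorem~\ref{te1} verbatim up to the invocation of Lemma~\ref{lemma_nemyt}, and then repair that step by checking the side condition $\mathbb{P}[V\in\mathcal{D}_{f_{Y_0}}]=0$ through the absolute continuity of $V$ supplied by Lemma~\ref{convo}. Fixing $t\in(t_0-\min\{\delta,\mu\},t_0+\min\{\delta,\mu\})\setminus\{t_0\}$ and $x\in\mathbb{R}$, and defining $V_N,V$ as in~\eqref{VN}, I would first reproduce the elementary argument that $V_N\to V$ in $\leb^2(\Omega)$, which requires only the uniform lower bound $S_0^N(t)>1/2$ for $N\ge N_t$, the $\leb^\infty(\Omega)$ convergence of the truncations $S_0^N(t),S_1^N(t)$, and $Y_1\in\leb^2(\Omega)$.

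The new ingredient is showing that $V$ is absolutely continuous. The plan is to decompose $V=Z_1 Y_1+Z_2$ with $Z_1=-S_1(t)/S_0(t)$ and $Z_2=x/S_0(t)$, to note that $(Z_1,Z_2)$ is a measurable function of $(A,B)$ and hence independent of $Y_1$ by hypothesis, and to use the choice of time interval to secure $Z_1\neq 0$ almost surely: on $(t_0-\delta,t_0+\delta)$ one has $S_0(t)>3/4$ almost surely by the remark preceding Theorem~\ref{te1}, and Remark~\ref{rmk_Y1} supplies $|S_1(t)|>m_t>0$ almost surely on $(t_0-\mu,t_0+\mu)\setminus\{t_0\}$. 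Lemma~\ref{convo} then applies with $U=Y_1$ and produces a density for $V$; since by assumption the discontinuity set $\mathcal{D}_{f_{Y_0}}$ has Lebesgue measure zero, $V$ assigns it zero probability, i.e.\ $\mathbb{P}[V\in\mathcal{D}_{f_{Y_0}}]=0$.

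Once this is in place, Lemma~\ref{lemma_nemyt} combined with the quadratic growth bound $f_{Y_0}(y)\le\alpha+\beta y^2$ yields $f_{Y_0}(V_N)\to f_{Y_0}(V)$ in $\leb^1(\Omega)$. The remaining step is identical to the conclusion of Theorem~\ref{te1}: the bound $S_0^N(t)>1/2$ for $N\ge N_t$ together with $S_0^N(t)\to S_0(t)$ in $\leb^\infty(\Omega)$ propagates to $1/S_0^N(t)\to 1/S_0(t)$ in $\leb^\infty(\Omega)$, so $f_{Y_0}(V_N)/S_0^N(t)\to f_{Y_0}(V)/S_0(t)$ in $\leb^1(\Omega)$, and taking expectations gives the pointwise limit $f_{X^N(t)}(x)\to f_{X(t)}(x)$.

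I expect the main subtlety to be the verification of the hypotheses of Lemma~\ref{convo}, specifically the nonvanishing $Z_1\neq 0$ almost surely and the independence of $(Z_1,Z_2)$ from $Y_1$. The nonvanishing is precisely why $t_0$ must be excluded from the admissible times: at $t=t_0$, $S_1(t_0)=0$ forces $Z_1=0$ and the smoothing by $Y_1$ collapses; it is also why the parameter $\mu$ from Remark~\ref{rmk_Y1} enters the time interval alongside $\delta$. The independence assumption that $Y_1$ is independent of $(A,B)$ is the other hypothesis beyond Theorem~\ref{the_densitat} that is indispensable for Lemma~\ref{convo}. Every other step is mechanical reuse of results already proved.
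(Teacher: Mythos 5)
Your proposal is correct and matches the paper's own argument: the paper likewise establishes the absolute continuity of $V=(x-Y_1S_1(t))/S_0(t)$ via Lemma~\ref{convo}, using $|S_1(t)|>m_t>0$ almost surely on the restricted time interval and the independence of $Y_1$ from $(A,B)$, so that $\mathbb{P}[V\in\mathcal{D}_{f_{Y_0}}]=0$ and Lemma~\ref{lemma_nemyt} applies, with the rest proceeding exactly as in Theorem~\ref{te1}. Your explicit decomposition $V=Z_1Y_1+Z_2$ with $Z_1=-S_1(t)/S_0(t)$ and $Z_2=x/S_0(t)$ is simply a more detailed spelling-out of the same step.
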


Theorem~\ref{te1super} will be applied in Example~\ref{example4}. 
An alternative version, with $Y_1$ playing the role of $Y_0$, can be formulated following Remark~\ref{rmk_Y1}. Notice that, nowhere in our theoretical exposition, we require independence between the coefficients of $A(t)$ and $B(t)$. We do not need any assumption on their probability distributions either, which might be discrete or continuous (but always bounded).

The methodology and theory presented in the paper do not cover all situations. For instance, let us study~\eqref{problem} involving discrete uncertainties. Other situations could be analogously analyzed.

\begin{theorem} \label{te1discrete}
Suppose the conditions of Theorem~\ref{the_densitat}. Assume that the coefficients $A_0,A_1,\ldots$, $B_0,B_1,\ldots$ are deterministic constants. If $f_{Y_0}$ has at most a countable number of discontinuities on $\mathbb{R}$, $f_{Y_0}(y)\leq \alpha+\beta y^2$ for certain constants $\alpha,\beta\geq0$, and $Y_1$ is a discrete random variable, then $\lim_{N\rightarrow\infty} f_{X^N(t)}(x)=f_{X(t)}(x)$ for almost every $x\in\mathbb{R}$, for each $t\in (t_0-\delta,t_0+\delta)$.
\end{theorem}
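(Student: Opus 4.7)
Since all coefficients $A_n,B_n$ are deterministic constants, induction on the Fröbenius recurrences for $S_{0,n}$ and $S_{1,n}$ (stated right after Theorem~\ref{nova_expr}) shows that $S_0^N(t)$, $S_1^N(t)$, $S_0(t)$ and $S_1(t)$ are \emph{deterministic} real numbers for each $t$. Fix $t\in(t_0-\delta,t_0+\delta)$ and $N\geq N_t$, so that $|S_0^N(t)|>1/2$ and $|S_0(t)|>3/4$. Enumerate the atoms of $Y_1$ as $\{y_k\}_k$ with probabilities $p_k>0$, and set $v_{N,k}(x)=(x-y_k S_1^N(t))/S_0^N(t)$ and $v_k(x)=(x-y_k S_1(t))/S_0(t)$. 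Then, by atomic decomposition of $Y_1$,
\[ f_{X^N(t)}(x)=\sum_k \frac{p_k}{|S_0^N(t)|}\,f_{Y_0}(v_{N,k}(x)), \qquad f_{X(t)}(x)=\sum_k \frac{p_k}{|S_0(t)|}\,f_{Y_0}(v_k(x)). \]

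The key step is to locate a Lebesgue-null exceptional set. Let $\mathcal{D}_{f_{Y_0}}$ be the (at most countable) discontinuity set of $f_{Y_0}$. For each $k$, the preimage $\{x\in\mathbb{R}:v_k(x)\in\mathcal{D}_{f_{Y_0}}\}=y_k S_1(t)+S_0(t)\,\mathcal{D}_{f_{Y_0}}$ is at most countable, since affine bijections preserve countability. Taking the (countable) union over $k$,
\[ E_t \coloneqq \bigcup_k \{x\in\mathbb{R}:v_k(x)\in\mathcal{D}_{f_{Y_0}}\} \]
is at most countable, hence $\mathrm{Leb}(E_t)=0$.

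For any $x\notin E_t$, I plan to reuse the mechanism of Lemma~\ref{lemma_nemyt} with $V_N=v_{N,Y_1}(x)$ and $V=v_{Y_1}(x)$: first, $V_N\to V$ in $\leb^2(\Omega)$ because $V_N-V$ has only the deterministic factors $x(1/S_0^N(t)-1/S_0(t))$ and $-Y_1(S_1^N(t)/S_0^N(t)-S_1(t)/S_0(t))$, which vanish as $N\to\infty$ (and $Y_1\in\leb^2(\Omega)$ by Theorem~\ref{the_densitat}); second, by construction of $E_t$, $\mathbb{P}[V\in\mathcal{D}_{f_{Y_0}}]=\sum_{k:v_k(x)\in\mathcal{D}_{f_{Y_0}}} p_k=0$; finally, the growth bound $f_{Y_0}(y)\leq\alpha+\beta y^2$ is assumed. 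Lemma~\ref{lemma_nemyt} then yields $f_{Y_0}(V_N)\to f_{Y_0}(V)$ in $\leb^1(\Omega)$, and dividing by the deterministic $|S_0^N(t)|\to|S_0(t)|$ (both bounded below by $1/2$ eventually) preserves the $\leb^1(\Omega)$ convergence. Taking expectations closes the argument: $f_{X^N(t)}(x)\to f_{X(t)}(x)$ for every $x\notin E_t$, i.e.\ for Lebesgue-a.e.\ $x\in\mathbb{R}$.

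The main obstacle is the identification of the exceptional set $E_t$: because $Y_0$'s density may have genuine jumps, and $Y_1$ is discrete (so $V$ is discrete and cannot smear $\mathcal{D}_{f_{Y_0}}$ into a null-probability event automatically, in contrast to Theorem~\ref{te1super}), one must use the determinism of $S_0(t),S_1(t)$ to argue that the affine pullbacks of a countable set remain countable, and that countably many such pullbacks still assemble to a Lebesgue-null set. Once that null set is pinned down, the remainder is a routine application of Lemma~\ref{lemma_nemyt} just as in the proof of Theorem~\ref{te1}.
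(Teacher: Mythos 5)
Your proposal is correct and follows essentially the same route as the paper: the exceptional set $E_t$ you construct is exactly the paper's countable set $\Lambda=\{y_1^jS_1(t)+d_kS_0(t)\}$, and the rest is the same application of Lemma~\ref{lemma_nemyt} after noting $V_N(x)\to V(x)$ in $\leb^2(\Omega)$. The explicit atomic decomposition of the densities is a harmless addition not present in the paper's proof.
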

\begin{proof}
Fix $t\in (t_0-\delta,t_0+\delta)$. Let $V_N(x)=(x-Y_1 S_1^N(t))/S_0^N(t)$, $V(x)=(x-Y_1 S_1(t))/S_0(t)$
(now we make the dependence of $V_N$ and $V$ on $x$ explicit). We know that $V_N(x)\rightarrow V(x)$ in $\leb^2(\Omega)$ as $N\rightarrow\infty$, for all $x\in\mathbb{R}$. Given the discontinuity set of $f_{Y_0}$, $\mathcal{D}_{f_{Y_0}}$, we need to justify that $\mathbb{P}[V(x)\in \mathcal{D}_{f_{Y_0}}]=0$, for almost every $x\in\mathbb{R}$. In this case, $f_{Y_0}(V_N(x))\rightarrow f_{Y_0}(V(x))$ in $\leb^1(\Omega)$ as $N\rightarrow\infty$, for almost every $x\in\mathbb{R}$.

Write $\mathcal{D}_{f_{Y_0}}=\{d_1,d_2,d_3,\ldots\}$. As $Y_1$ is a discrete random variable, its support may be expressed as $\mathcal{S}_{Y_1}=\{y_1^1,y_1^2,y_1^3,\ldots\}$. Then the support of $V(x)$ is $\mathcal{S}_{V(x)}=\{(x-y_1^jS_1(t))/S_0(t):\,j=1,2,3,\ldots\}$. The problematic $x$'s are those such that $x=y_1^jS_1(t)+d_kS_0(t)$. Let $\Lambda=\{y_1^jS_1(t)+d_kS_0(t):\,j,k=1,2,3,\ldots\}$, which is a countable set. For every $x\notin\Lambda$, $\mathbb{P}[V(x)\in \mathcal{D}_{f_{Y_0}}]=0$. As a consequence, $\lim_{N\rightarrow\infty}f_{Y_0}(V_N(x))=f_{Y_0}(V(x))$ in $\leb^1(\Omega)$, $x\notin\Lambda$, by Lemma~\ref{lemma_nemyt}. This gives $\lim_{N\rightarrow\infty} f_{X^N(t)}(x)=f_{X(t)}(x)$, $x\notin\Lambda$, and we are done.
\end{proof}

Once again, one can state a similar version with $Y_1$ playing the role of $Y_0$ (see Remark~\ref{rmk_Y1}) and working on $(t_0-\mu,t_0+\mu)\backslash\{t_0\}$, instead. Example~\ref{example5} covers this situation.

\section{Crude Monte Carlo algorithm: Computational aspects} \label{comp_aspects}

We recast the proposed methodology in the form of Algorithm~\ref{algo}, which corresponds to the case of $Y_0$ having a density, see~\eqref{dens_Xt}; following Remark~\ref{rmk_Y1}, one can exchange the role of $Y_0$ and $Y_1$ in Algorithm~\ref{algo}, provided that $Y_1$ has a density. 

By judiciously exploiting its expression in~\eqref{dens_Xnt}, $f_{X^N(t)}(x)$ can be approximated \textit{via} a Monte Carlo procedure \cite[pp.~53--54]{xiu_llibre} to evaluate the expectation: using $M$ randomly generated realizations of $Y_1$, $S_0^N(t)$ and $S_1^N(t)$, we compute the sample average of $V_N(x,t)$ in~\eqref{VN}. 
Algorithm~\ref{algo} corresponds to symbolic computations with symbolic variables $x$ and $t$ \cite{symbolic2}; it computes a function $f_X^{N,M}(x,t)$, which is a complex closed-form expression approximating $f_{X(t)}(x)$. To speed up the execution of the algorithm, numerical values of $t$ and/or $x$ may be provided.

\begin{algorithm}[hbtp!]
    \begin{algorithmic}[1]
 
   \Statex \textbf{Inputs:} $t_0$; $N$; $f_{Y_0}$; probability distribution of $A_0,\ldots,A_N$, $B_0,\ldots,B_N$, $Y_1$; and number $M$ of realizations in the classical Monte Carlo procedure.
   \State $S_{0,0}\gets 1$, $S_{0,1}\gets 0$, $S_{1,0}\gets 0$, $S_{1,1}\gets 1$ \Comment{Initial conditions}
   \State $\Sigma\gets 0$\Comment{Initialize the samples sum}
\For {$i=1,\ldots,M$} \Comment{Monte Carlo loop}
            \State Draw randomly a realization of $(A_0,\ldots,A_{N-2},B_0,\ldots,B_{N-2})$ and $Y_1$
            \For {$n=0,\ldots,N-2$} 
                \State $S_{0,n+2}\gets\frac{-1}{(n+2)(n+1)}\sum_{m=0}^n [(m+1)A_{n-m}S_{0,m+1}+B_{n-m}S_{0,m}]$
                \State $S_{1,n+2}\gets\frac{-1}{(n+2)(n+1)}\sum_{m=0}^n [(m+1)A_{n-m}S_{1,m+1}+B_{n-m}S_{1,m}]$
            \EndFor
            \State $S_0^N(t) \gets 1+\sum_{n=1}^N S_{0,n}(t-t_0)^n$ \Comment{Realization of $S_0^N(t)$}
            \State $S_1^N(t) \gets \sum_{n=1}^N S_{1,n}(t-t_0)^n$ \Comment{Realization of $S_1^N(t)$}
            \State $\Sigma \gets \Sigma + f_{Y_0}\left(\frac{x-Y_1 S_1^N(t)}{S_0^N(t)}\right)\frac{1}{|S_0^N(t)|}$ \Comment{Update the samples sum}
\EndFor
\State $f_X^{N,M}(x,t)\gets \Sigma/M$\Comment{Set sample average}
\State \textbf{Return} $f_X^{N,M}(x,t)$\Comment{Approximation of $f_{X^N(t)}(x)$}
\end{algorithmic}
    \caption{Estimation of $f_{X^N(t)}(x)$ \textit{via} a crude Monte Carlo procedure.}
        \label{algo}
\end{algorithm}

The estimation error can be split into two contributions: $f_{X(t)}(x)-f_X^{N,M}(x,t)=\theta_{N}(x,t)+\mathcal{E}_{N,M}(x,t)$. The first contribution, $\theta_N(x,t) = f_{X(t)}(x)-f_{X^N(t)}(x)$, is the bias error caused by the truncation order $N$ in the Fr\"obenius method. It is deterministic and decays exponentially as $N\rightarrow\infty$ for each $t$ and $x$ by Remark~\ref{rmk_rate}. The second contribution is the sampling error $\mathcal{E}_{N,M}(x,t) = f_{X^N(t)}(x)-f_X^{N,M}(x,t)$, due to using a finite number $M$ of samples (statistical error). This contribution is random and $\mathcal{E}_{N,M}(x,t)\rightarrow 0$ with $M$ almost surely, as a consequence of the law of large numbers. If the variance
\begin{equation} \sigma^2_N(x,t)\coloneqq\mathbb{V}\left[f_{Y_0}\left(\frac{x-Y_1 S_1^N(t)}{S_0^N(t)}\right)\frac{1}{S_0^N(t)}\right] 
 \label{sigma2N}
\end{equation}
is finite, then the asymptotic probability distribution of $\mathcal{E}_{N,M}(x,t)$ as $M\rightarrow\infty$ is, by the central limit theorem, $\text{Normal}(0,\sigma^2_{N}(x,t)/M)$. The variance $\sigma^2_N(x,t)$ tends, as $N\rightarrow\infty$, to $\sigma^2(x,t)\coloneqq \mathbb{V}[f_{Y_0}(\frac{x-Y_1 S_1(t)}{S_0(t)})\frac{1}{S_0(t)}]$ (see Remark~\ref{rmk_conv_var}). In this case, we say that the sampling error is of order $1/\sqrt{M}$, and write $\mathcal{O}(1/\sqrt{M})$. On the contrary, if $\sigma_N^2(x,t)=\infty$, then the almost sure convergence $\mathcal{E}_{N,M}(x,t)\rightarrow 0$ with $M$ remains valid, although it might be much slower and affect the approximation to $f_{X^N(t)}(x)$ severely. See the forthcoming Example~\ref{example3} for an illustration of this issue.

Even though the bias error decays very fast, the sampling error is inevitable. In numerical computations, for fixed $M$, there is usually an index $N$ from which the global error does not go down anymore because the sampling error $\mathcal{O}(1/\sqrt{M})$ becomes dominant.

Within the main loop of Algorithm~\ref{algo} (loop over the samples), we first generate one realization for each random variable $A_0,\ldots,A_{N-2}$, $B_0,\ldots,B_{N-2}$ and $Y_1$; these realizations are used to compute by recursion the corresponding realizations of $S_0^N(t)$ and $S_1^N(t)$. In our implementation, this procedure is more efficient than expressing first $S_0^N(t)$ and $S_1^N(t)$ recursively in terms of symbolic variables $A_0,\ldots,A_{N-2}$, $B_0,\ldots,B_{N-2}$ and $Y_1$, and then evaluate for the realizations of $A_0,\ldots,A_{N-2}$, $B_0,\ldots,B_{N-2}$ and $Y_1$. This is due to the excessive complexity of the symbolic expressions of $S_0^N(t)$ and $S_1^N(t)$, which makes the computational time of their evaluation for specific realizations prohibitively large.

The computational complexity of Algorithm~\ref{algo} is at most $\mathcal{O}(MN^2)$ (the nested loop over $n$ demands $\sum_{n=0}^{N-2}\mathcal{O}(n)=\mathcal{O}(N^2)$ operations in general). As we show in the following Section~\ref{num_examples}, the implemented algorithm is certainly applicable and suitable for stochastic computations.

By taking $M=\mathcal{O}(1/\epsilon^2)$, the variance of the statistical error is $\mathbb{V}[\mathcal{E}_{N,M}(x,t)]=\mathcal{O}(\epsilon^2)$ (assuming the variance in~\eqref{sigma2N} finite). Under exponential convergence of the bias, by picking $N=\mathcal{O}(\log(1/\epsilon))+\mathcal{O}(1)$ the bias error is $|\theta_N(x,t)|=\mathcal{O}(\epsilon)$. Then the root mean square error of the algorithm is
$\|f_{X(t)}(x)-f_X^{N,M}(x,t)\|_2=\sqrt{\theta_N(x,t)^2+\mathbb{V}[\mathcal{E}_{N,M}(x,t)]}=\mathcal{O}(\epsilon)$,
with a computational complexity $\mathcal{O}(MN^2)=\mathcal{O}\left(\epsilon^{-2}\log^2\epsilon\right)$.

The complexity of Algorithm~\ref{algo} is significantly reduced if $A(t)$ and $B(t)$ are random polynomials, instead of infinite series. Suppose for instance that $A_j=0$ and $B_j=0$, for $j\geq N_0-1$. Then, within the nested loop over $n$, we actually sum $N_0$ terms, instead of $n$ terms. Therefore, the nested loop demands $N_0\mathcal{O}(N)=\mathcal{O}(N)$ operations. The whole algorithm then requires $\mathcal{O}(MN)$ operations only. If we take $M=\mathcal{O}(1/\epsilon^2)$ and $N=\mathcal{O}(\log(1/\epsilon))+\mathcal{O}(1)$ to ensure a root mean square error of order $\epsilon$, the computational complexity becomes $\mathcal{O}(MN)=\mathcal{O}(\epsilon^{-2}\log(\epsilon^{-1}))$. Notice that $0<\log(\epsilon^{-1})<(\log(\epsilon^{-1}))^2=\log^2\epsilon$, for $0<\epsilon<\e^{-1}$, so the complexity is lessened.

In the case in which $A(t)$ and $B(t)$ are deterministic expansions, the loop over $n$ and the assignments for $S_0^N(t)$ and $S_1^N(t)$ may be run once for all at the beginning of the algorithm and before the loop over the samples. 
The computational complexity then reduces even more to $\mathcal{O}(M)+\mathcal{O}(N^2)$ operations and the global cost is generally dominated by the sampling. To guarantee a global root mean square error of order $\epsilon$ with $M=\mathcal{O}(1/\epsilon^2)$ and $N=\mathcal{O}(\log(1/\epsilon))+\mathcal{O}(1)$, the computational complexity becomes $\mathcal{O}(\epsilon^{-2})+\mathcal{O}(\log^2\epsilon)=\mathcal{O}(\epsilon^{-2})$. This scenario allows for increasing $M$ and obtaining more accurate results by improving the statistical convergence. If $A(t)$ and $B(t)$ are simply deterministic polynomials, then the overall cost reduces further to $\mathcal{O}(M)+\mathcal{O}(N)$ operations, which yields in the end $\mathcal{O}(\epsilon^{-2})$ calculations.

In the view of computational applications, an important drawback of our exposition is the lack of awareness on the specific values of $\delta$ and $\mu$, which are necessary to prove the theoretical convergence. Given any $t$, one can apply Algorithm~\ref{algo} and check the convergence of the estimator with $M$ and $N$. The results can be validated using other stochastic methods and using statistics based on the estimated density. Notice that, in Algorithm~\ref{algo}, we have put $|S_0^N(t)|$ instead of $S_0^N(t)$. Even though we assume that $S_0^N(t)>0$ almost surely, for $t\in (t_0-\delta,t_0+\delta)$ and $N\geq N_t$, the absolute value ensures positiveness in numerical applications even if $|t-t_0|\geq\delta$.

\section{Crude Monte Carlo algorithm: Numerical examples} \label{num_examples}
In this section, we numerically illustrate our theoretical findings, using the crude Monte Carlo Algorithm~\ref{algo} to estimate the density of the solution to~\eqref{problem}. Several cases, differing by the probability distributions of the random input coefficients, are considered to cover a large class of situations and show the broad applicability of our theory. 

In each of these examples, we first check that the necessary theoretical conditions hold; we then estimate the density function $f_{X^N(t)}(x)$ for several increasing values of $N$ to highlight the convergence toward $f_{X(t)}(x)$. To this end, we employ the  Monte Carlo sampling procedure outlined in Algorithm~\ref{algo}. 

The theoretical results of Section~\ref{sec_dens} motivate the structure and the choice of the following five examples. In Example~\ref{example1}, we address the case where $A(t)$ and $B(t)$ are random polynomials; while Example~\ref{example2} concerns infinite expansions and infinite dimensionality. These first two examples showcase the applicability of Theorem~\ref{te1}. Example~\ref{example3} is designed to highlight Remark~\ref{rmk_Y1}. Up to this example, $f_{Y_0}$ or $f_{Y_1}$ are continuous on the whole real line. In contrast, Example~\ref{example4} considers experiments with $f_{Y_0}$ possessing discontinuity points, thus evoking Theorem~\ref{te1super}. Finally, Example~\ref{example5} considers the case where $A(t)$ and $B(t)$ are deterministic, so that Theorem~\ref{te1discrete} applies.

The implementations and computations are performed with Mathematica\textsuperscript{\tiny\textregistered}, version 11.2 \cite{mathematica}, owing to its capability to handle both symbolic and numeric computations. 
In general, Algorithm~\ref{algo} is applied with $M=20,000$ samples, as beyond this limit, the computational burden is becoming massive. 
The output function $f_X^{N,M}(x,t)$ is handled symbolically on $t$ and $x$. To simplify the notations,
we refer to the Monte Carlo estimate $f_X^{N,M}(x,t)$ as $\hat f_{X^N(t)}(x)$. 
We recall that the estimate $\hat f_{X^N(t)}(x)$ has two sources of error: bias and sampling. Although the bias error decays very fast (exponentially under the conditions of Remark~\ref{rmk_rate}), the sampling error is unavoidable and at least of order $\mathcal{O}(1/\sqrt{M})$. 

In each one of the following examples, we perform a complete analysis of the errors. As the exact density function $f_{X(t)}(x)$ is not known, we first analyze differences in consecutive (in $N$) estimates, both pointwise, using 
\begin{equation}\label{diff_loc}
\delta\epsilon^N(x,t)\coloneqq|\hat f_{X^{N+1}(t)}(x)-\hat f_{X^N(t)}(x)|, 
\end{equation}
and globally, using the norm 
\begin{equation}\label{diff_norm}
    \Delta\epsilon^N(t)\coloneqq\|\hat f_{X^{N+1}(t)}-\hat f_{X^N(t)}\|_{\leb^1(\mathbb{R})}. 
\end{equation}
As successive differences do not directly characterize the error, we also report 
\begin{equation}\label{err_estim}
    E^N(t)\coloneqq\|\hat f_{X^{L}(t)}-\hat f_{X^N(t)}\|_{\leb^1(\mathbb{R})} 
\end{equation}
for some pre-fixed $L\gg1$, selected such that $\hat f_{X^{L}(t)}$ plays the role of a bias-free estimate of the function $f_{X(t)}$. 
We set $L=30$ in the following. 
The $\leb^1(\mathbb{R})$ norms are computed by direct numerical integration, using a standard quadrature rule (standard \verb|NIntegrate| routine in Mathematica\textsuperscript{\tiny\textregistered}).

\begin{example} \label{example1} \normalfont
We start with the stochastic problem~\eqref{problem} where both $A(t)$ and $B(t)$ are random polynomials of degree $1$: $A(t) = A_0+A_1t$, and $B(t) = B_0+B_1t$.
We set $A_0=4$, $A_1\sim\text{Uniform}(0,1)$, $B_0\sim\text{Gamma}(2,2)$, $B_1\sim\text{Bernoulli}(0.35)$, $Y_0\sim\text{Normal}(2,1)$ and $Y_1\sim\text{Poisson}(2)$, all being independent random variables. In order for the hypotheses of Theorem~\ref{nostre} and Theorem~\ref{te1} to be satisfied, the Gamma distribution is truncated. For the Gamma distribution with shape and rate $2$, it can be checked that the interval $[0,4]$ contains approximately $99.7\%$ of the probability, so we actually consider $B_0\sim\text{Gamma}(2,2)|_{[0,4]}$.

By Theorem~\ref{nostre}, the unique mean square solution to the problem can be written as a random power series $X(t)=\sum_{n=0}^\infty X_n t^n$ that is mean square convergent for all $t\in\mathbb{R}$. With Theorem~\ref{te1}, we approximate pointwise the probability density function $f_{X(t)}(x)$ with $\hat f_{X^N(t)}(x)$, $N\geq0$, and use Algorithm~\ref{algo} taking advantage from the fact that $A(t)$ and $B(t)$ are random polynomials and not infinite expansions. 
We consider times $t=0.5$, 1 and 1.5. In Figure~\ref{figure1} we present the graphs of $\hat f_{X^N(t)}(x)$ at the corresponding times. Observe that the estimates are smooth, due to the regularity of the initial density $f_{Y_0}$, see Theorem~\ref{te2}. Observe also that, as $N$ increases, the density functions become closer, reflecting the theoretical convergence. The convergence is made clear in the corresponding successive differences $\delta\epsilon^N(x,t)$ (see~\eqref{diff_loc}) reported in Figure~\ref{figure1error}. 
Table~\ref{table1error} presents the $\leb^1(\mathbb{R})$ norms of the successive differences, $\Delta\epsilon^N(t)$ (see~\eqref{diff_norm}).

\begin{figure}[hbt!]
  \begin{center}
    \includegraphics[width=0.32\textwidth]{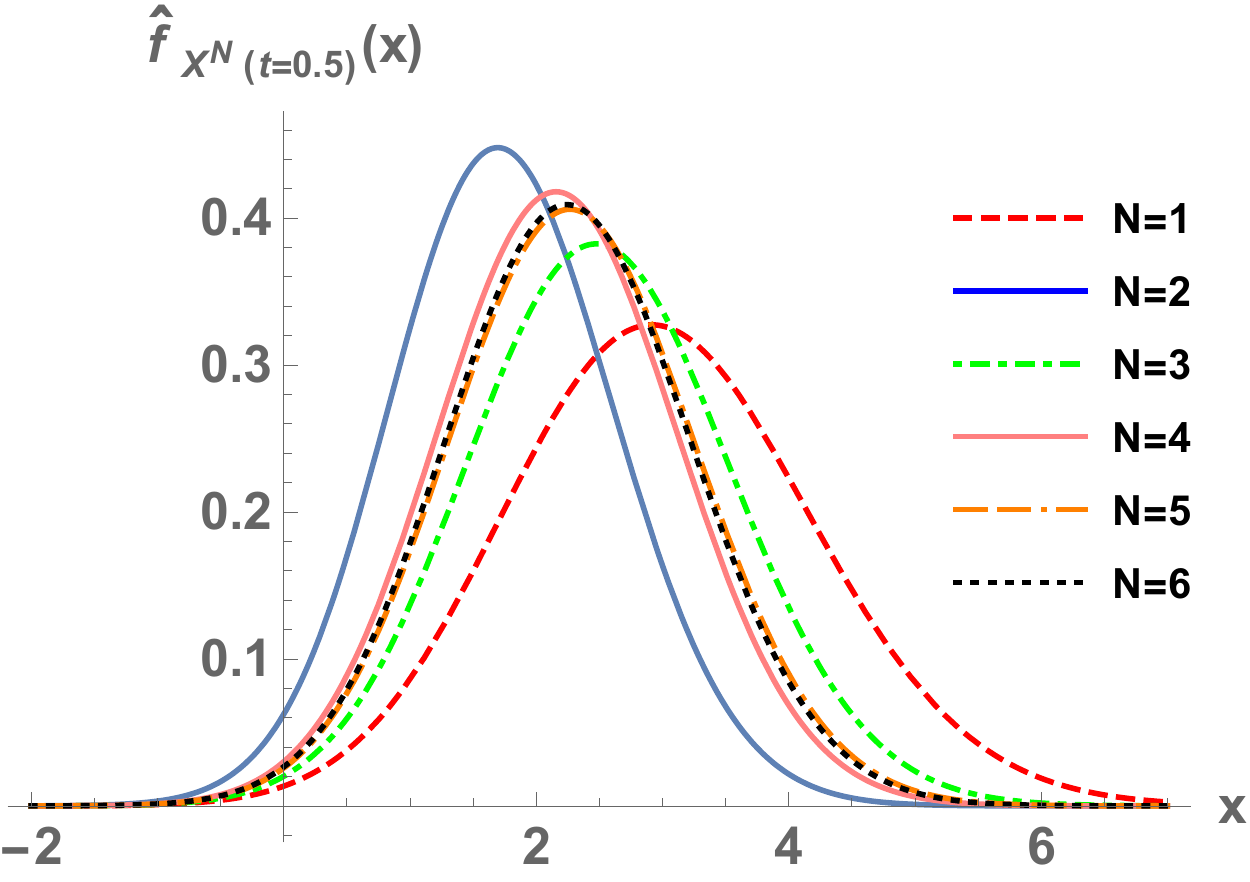}
    \includegraphics[width=0.32\textwidth]{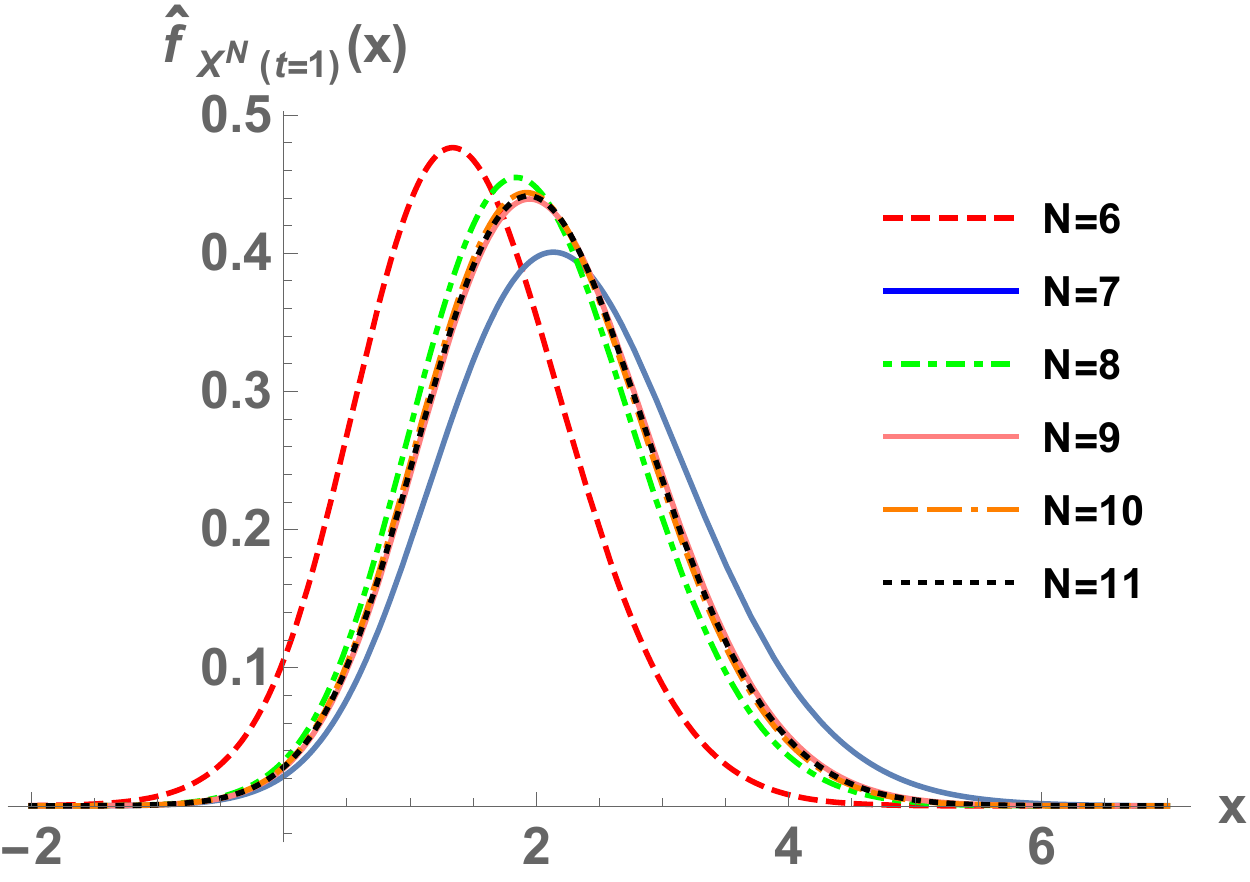}
    \includegraphics[width=0.32\textwidth]{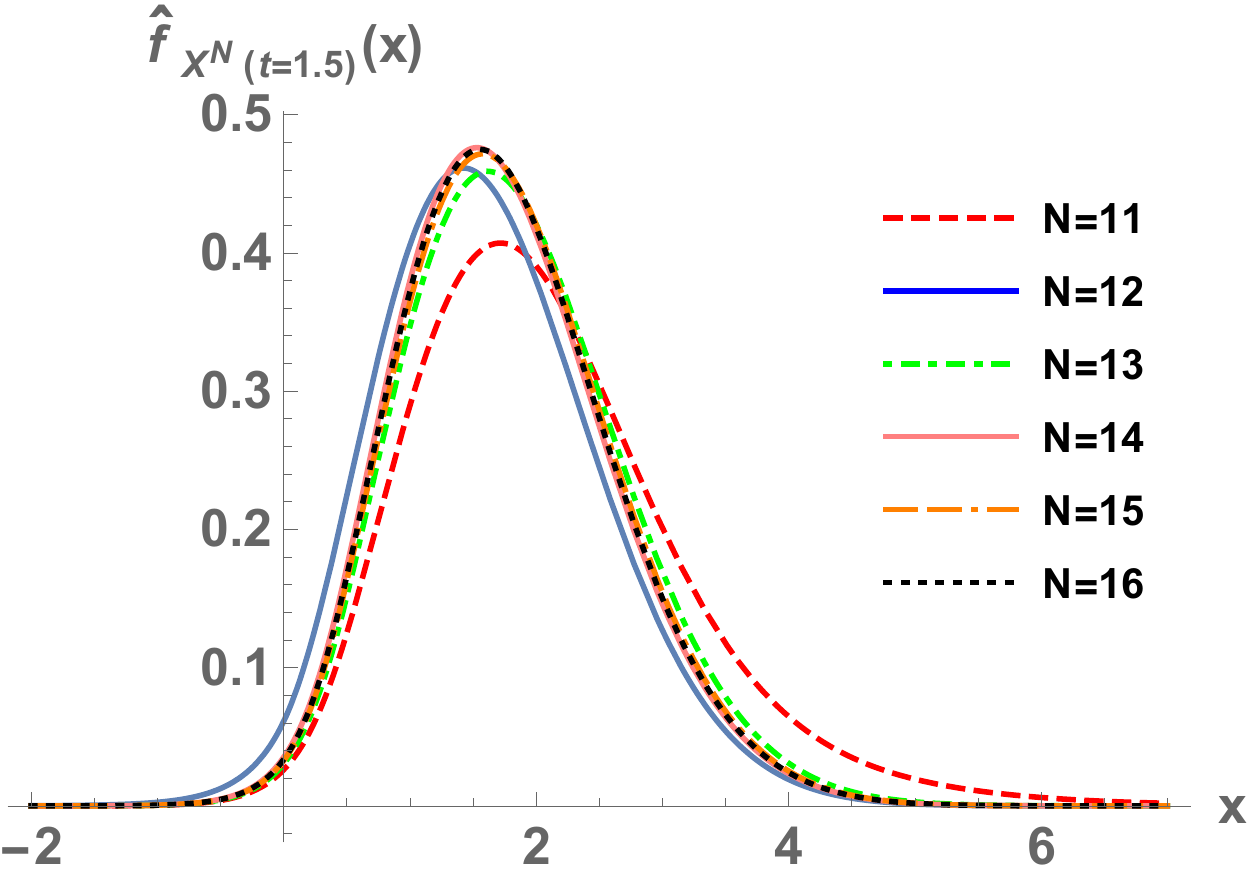}
    \caption{Graphical representations of the Monte Carlo estimates $\hat f_{X^N(t)} (x)$ 
    at $t=0.5$ (left), $t=1$ (center) and $t=1.5$ (right), 
    with orders of truncation $N=1\textendash6$, $N=6\textendash11$ and $N=11\textendash16$, respectively. 
    This figure corresponds to Example~\ref{example1}.}
        \label{figure1}
    \end{center}
  \end{figure}
    
    \begin{figure}[hbt!]
  \begin{center}
    \includegraphics[width=0.32\textwidth]{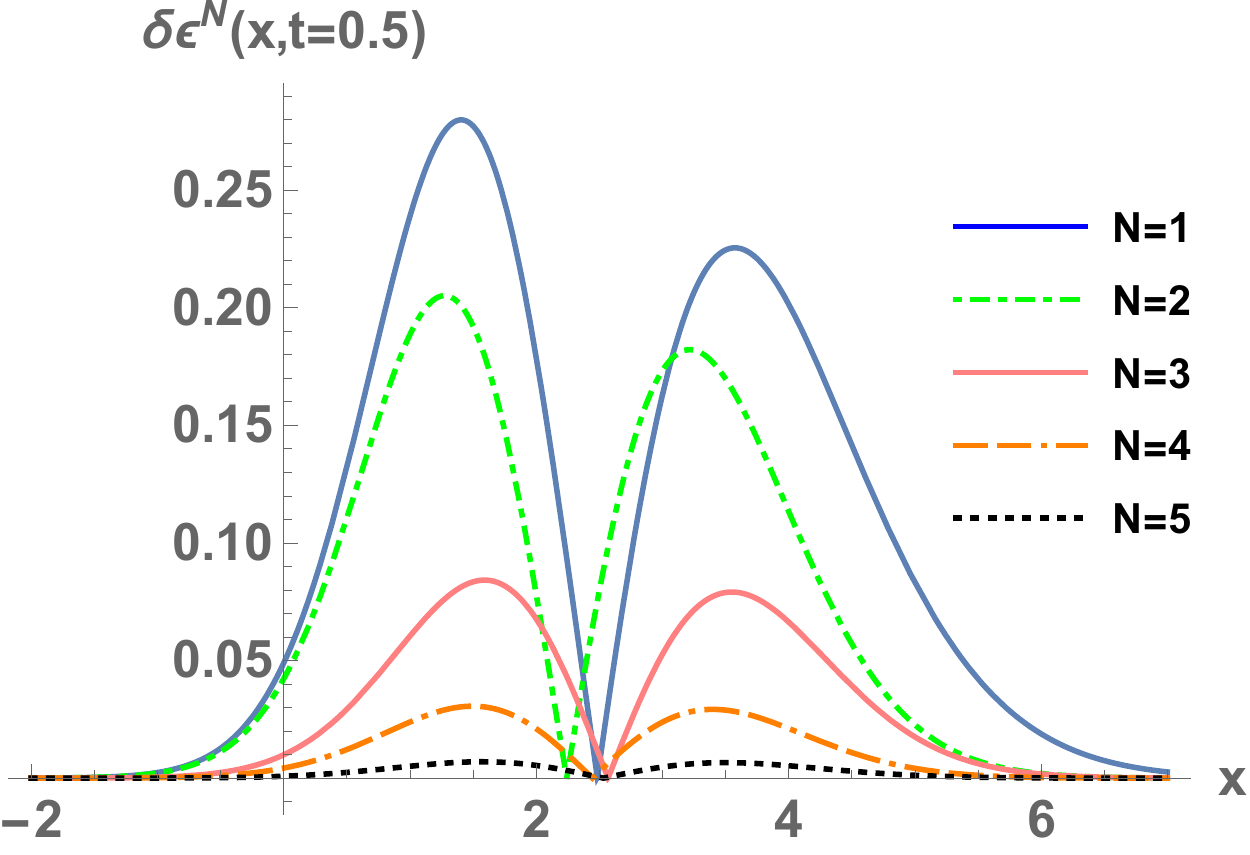}
    \includegraphics[width=0.32\textwidth]{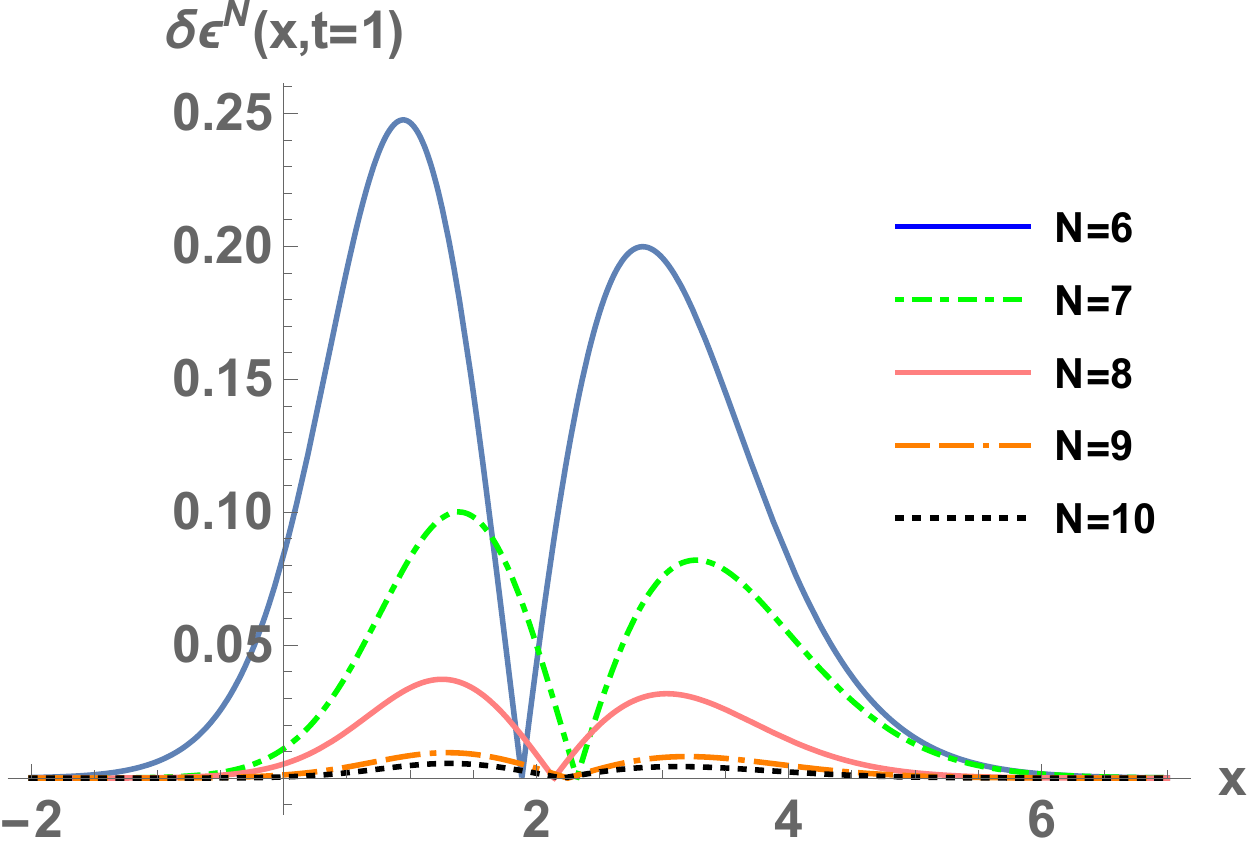}
    \includegraphics[width=0.32\textwidth]{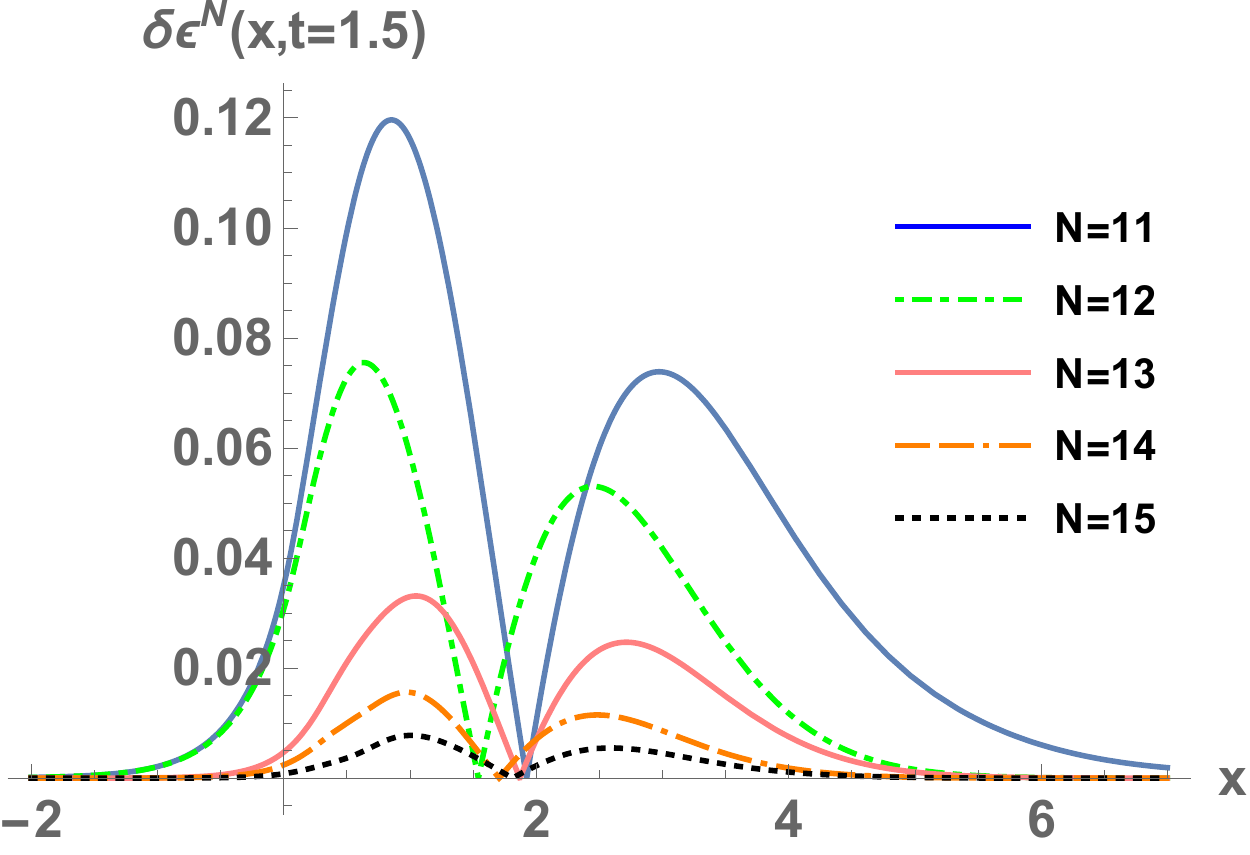}
    \caption{
    Differences in consecutive estimates $\delta\epsilon^N(x,t)$  (see~\eqref{diff_loc})
    at $t=0.5$ (left), $t=1$ (center) and $t=1.5$ (right), 
    with orders of truncation $N=1\textendash5$, $N=6\textendash10$ and $N=11\textendash15$, respectively. 
    This figure corresponds to Example~\ref{example1}.}
        \label{figure1error}
    \end{center}
  \end{figure}

\begin{table}[hbt!]
\footnotesize
\begin{center}
\begin{tabular}{|c|ccccc|}\hline
 & $N=1$ & $N=2$ & $N=3$ & $N=4$ & $N=5$  \\ 
$t=0.5$ & $0.903091$ & $0.622968$ & $0.270690$ & $0.0923362$ & $0.0178834$  \\
\hline
% \\ \hhline{=|=====}
$$ & $N=6$ & $N=7$ & $N=8$ & $N=9$ & $N=10$  \\ 
%\hline
$t=1$ & $0.691809$ & $0.263246$ & $0.0912177$ & $0.0345686$ & $0.026688$  
\\ \hline
%\\ \hhline{=|=====}
$$ & $N=11$ & $N=12$ & $N=13$ & $N=14$ & $N=15$  \\
$t=1.5$ & $0.348643$ & $0.180075$ & $0.0721679$ & $0.0320314$ & $0.0198364$  \\ \hline
\end{tabular}
\caption{Norm $\Delta \epsilon^N(t)$ of differences in consecutive estimates (see~\eqref{diff_norm})
for different times $t$ and truncation orders $N$. 
This table corresponds to Example~\ref{example1}.}
\label{table1error}
\end{center}
\end{table}

The left plot in Figure~\ref{figure1error2} reports (in log-scale) the error estimate $E^N(t)$ (see \eqref{err_estim}), for $t=0.5$, $t=1$ and $t=1.5$. 
From the plot, it is clear that there is an index $N$ from which the error does not go down anymore, because of the sampling error (recall that we used a fixed number of samples $M=20,000$). 
Notice also that, as $|t-t_0|=|t|$ gets larger, a higher order of truncation $N$ is required to enhance the approximations of 
$f_{X(t)}(x)$. 
In the right plot of Figure~\ref{figure1error2}, we report the error estimate, $E^N(t)$, as a function of the consecutive difference, $\Delta\epsilon^N(t)$, for $t=0.5$, $t=1$ and $t=1.5$. We also plot a regression line through the data to reflect the exponential relationship between $E^N(t)$ and $\Delta \epsilon^N(t)$, at a given $t$,
\begin{equation} 
  \log E^N(t)\approx \log\beta(t)+\alpha(t)\log\Delta\epsilon^N(t).
 \label{colin1}
\end{equation}
% to reflect the exponential relationship
% \begin{equation}
%  E^N(t)\approx\beta(t)\left(\Delta \epsilon^N(t)\right)^{\alpha(t)}.
%  \label{colin2}
% \end{equation}
There are three regression lines, one for each time $t$. We observe a strong linear relation with $N$ between the errors and the successive differences in log-scale, with slope $\alpha(t)$ being approximately~$1$, at least up to the truncation order at which the sampling error becomes dominant. This finding suggests that it is possible to estimate the norm of the bias error, $\|\theta_N(\cdot,t)\|_{\leb^1(\mathbb{R})}$, from the norm of the successive differences $\Delta\epsilon^N(t)$, provided that $M$ is large enough, and choose $N$ according to the targeted accuracy.

\begin{figure}[hbt!]
  \begin{center}
    \includegraphics[width=0.36\textwidth]{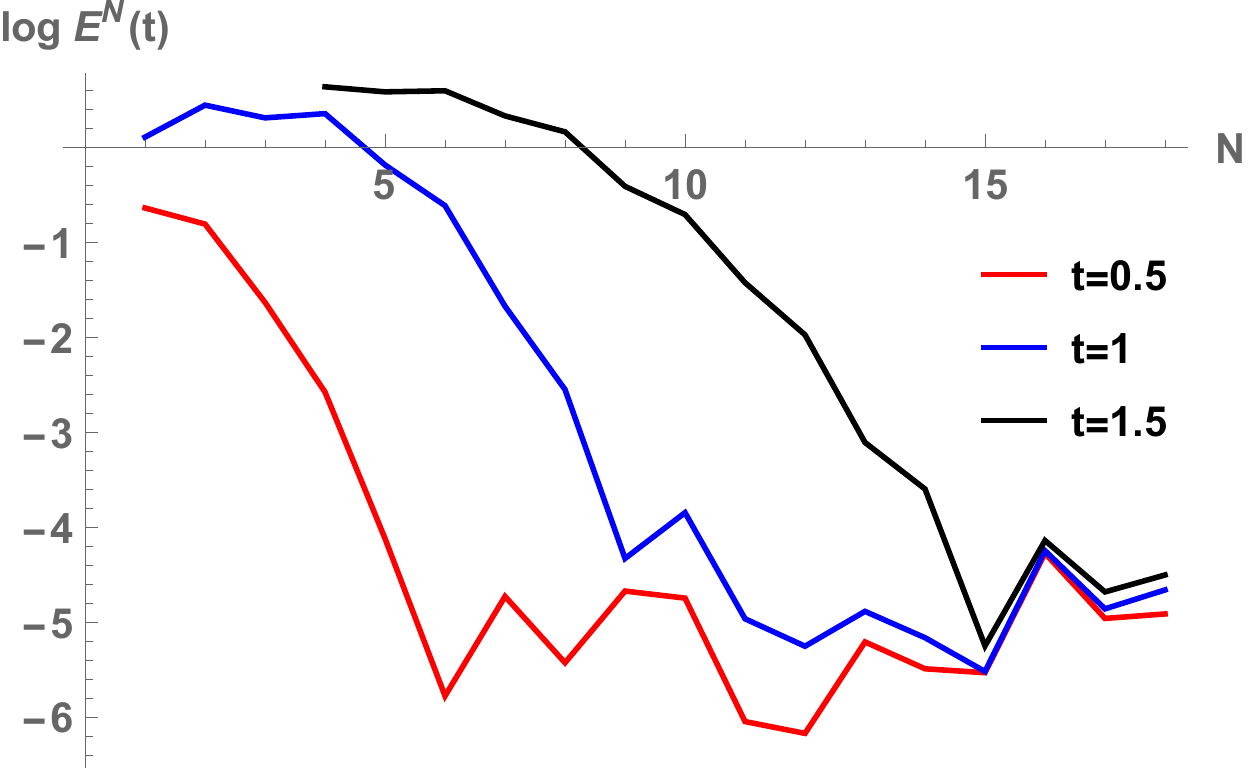}
    \includegraphics[width=0.36\textwidth]{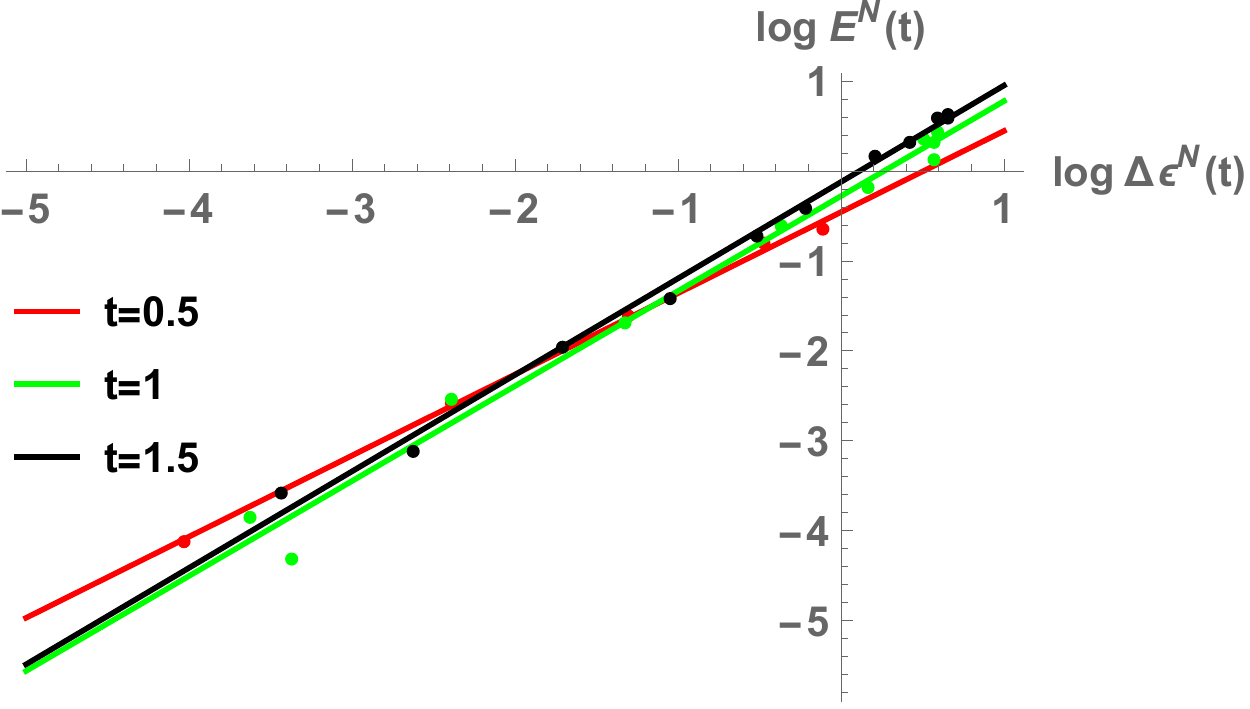}
    \caption{Left: error $E^N(t)$ (see \eqref{err_estim}), for different times as indicated.
    Right: relation between $\log E^N(t)$ and $\log \Delta\epsilon^N(t)$, 
    for $t=0.5$, $t=1$ and $t=1.5$. Also reported are linear regressions.
    This figure corresponds to Example~\ref{example1}.}
        \label{figure1error2}
    \end{center}
  \end{figure}
	
The decay of the sampling error as the inverse of the square root of the number of realizations can be documented. Let us fix a truncation order $N=20$. Define the Monte Carlo error for $P$ realizations at time $t$ as
\begin{equation}
 \text{MCE}^P(t)\coloneqq\| f_X^{N,P}(\cdot,t)-f_X^{N,M}(\cdot,t)\|_{\leb^1(\mathbb{R})}.
 \label{mcerr}
\end{equation}
Here $M=20,000$ is the maximum amount of realizations. Figure~\ref{figmcerr} shows $\text{MCE}^P(t)$ defined by~\eqref{mcerr} for $t=0.5$, $1$ and $1.5$, using logarithm scale for both axes. We have considered nested samples of length $P\in\{100,200,400,800,1600,3200,6400,12800\}$ growing geometrically. We observe the decline of the sampling error as $P$ grows. Obviously, the error $\text{MCE}^P(t)$ depends on the random numbers generated. Theoretically, the mean error $\mathbb{E}[\text{MCE}^P(t)]$ decreases at rate given by the inverse of the square root of the sample length $P$. In the figure, the line corresponding to the $1/\sqrt{P}$ decay has been drawn, for comparison with $\text{MCE}^P(t)$.

\begin{figure}[hbt!]
  \begin{center}
    \includegraphics[width=0.6\textwidth]{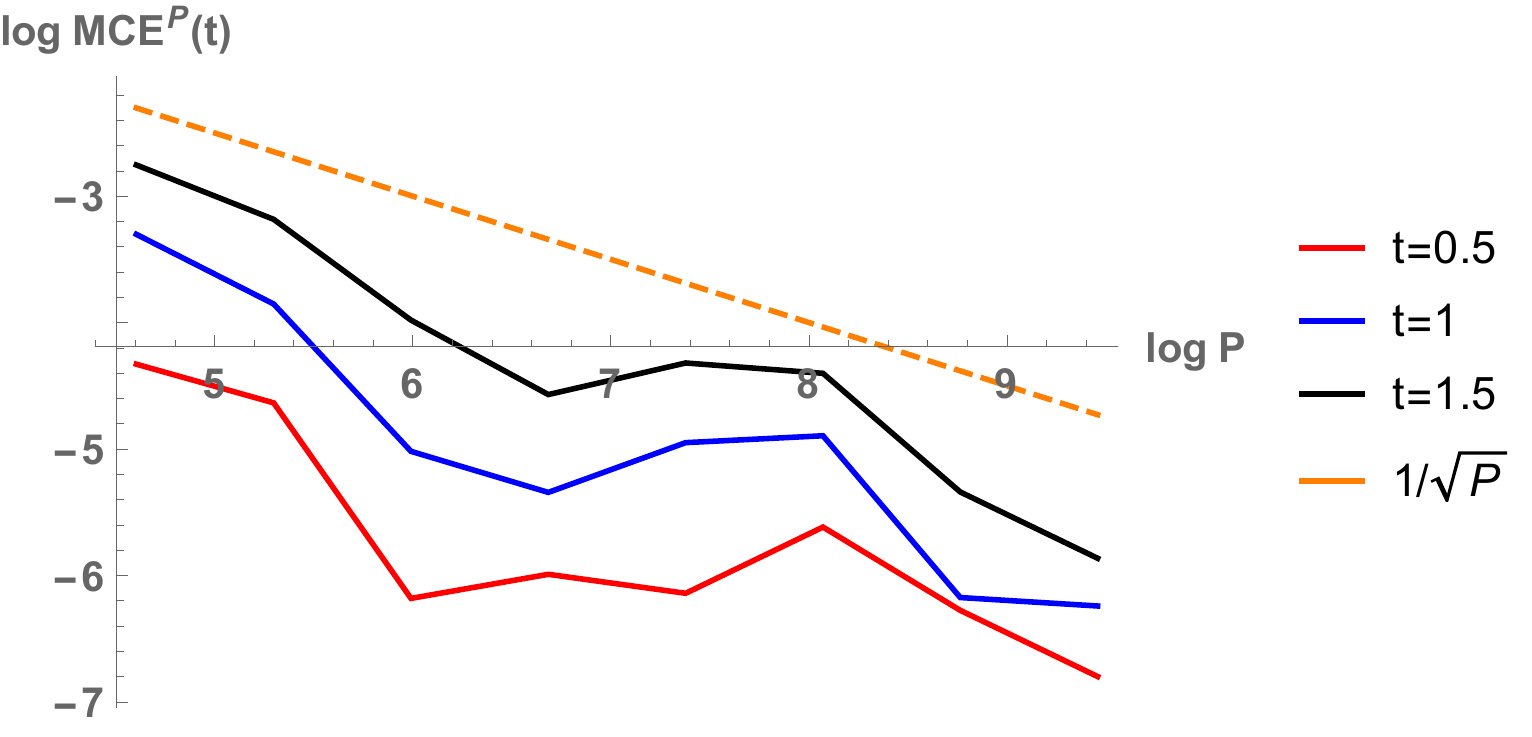}
    \caption{Sampling error~\eqref{mcerr} with the number of realizations $P$, for different times as indicated.
    This figure corresponds to Example~\ref{example1}.}
        \label{figmcerr}
    \end{center}
  \end{figure}

\end{example}

%%%%%%%%%%%%%%%%%%% Example 2:

\begin{example} \label{example2} \normalfont
In the second example, we consider problem~\eqref{problem} with $A(t)$ and $B(t)$ having infinite expansions with coefficients $A_n\sim\text{Beta}(11,15)$ for $n\geq0$, $B_0=0$, $B_n=1/n^2$ for $n\geq1$, while $Y_0\sim f_{Y_0}(y)=\frac{\sqrt{2}}{\pi(1+y^4)}$ ($-\infty<y<\infty$) and $Y_1\sim \text{Poisson}(2)$. All these random quantities are assumed to be independent. 
The power series of $A(t)$ and $B(t)$ converge on $(-1,1)$ (that is for $r=1$), so the mean square solution $X(t)=\sum_{n=0}^\infty X_n t^n$ given by Theorem~\ref{nostre} is defined on $(-1,1)$. 
Theorem~\ref{te1} allows for approximating $f_{X(t)}(x)$ with $\hat f_{X^N(t)}(x)$, $N\geq0$. 

Figure~\ref{figure2} shows graphical representations of $\hat f_{X^N(t)}(x)$ for times $t=0.25$, $0.75$ and $0.99$, with orders of truncation $N=1\textendash 5$. The evident regularity of $\hat f_{X^N(t)}(x)$ is inherited from the smoothness of the density $f_{Y_0}$, by Theorem~\ref{te2}. 

\begin{figure}[hbt!]
  \begin{center}
    \includegraphics[width=0.32\textwidth]{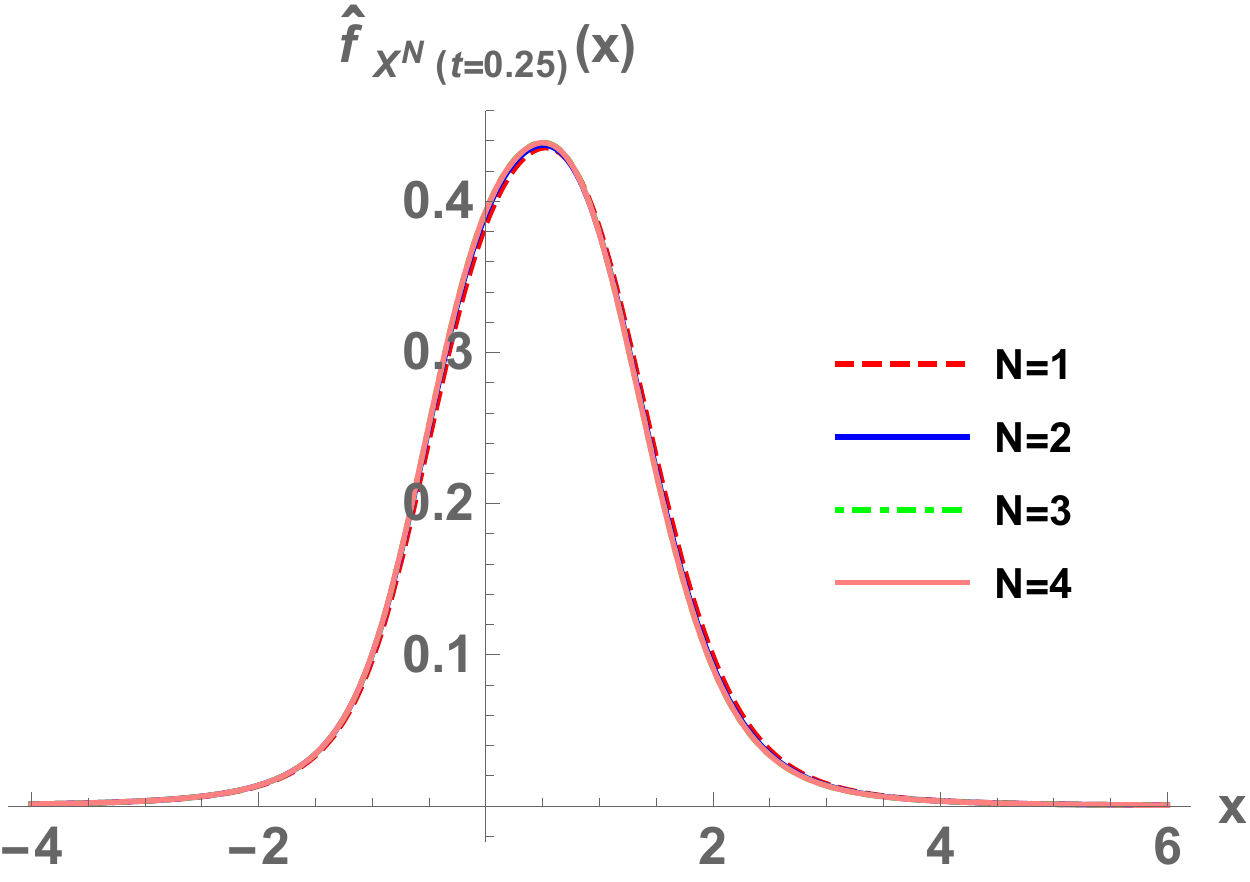}
    \includegraphics[width=0.32\textwidth]{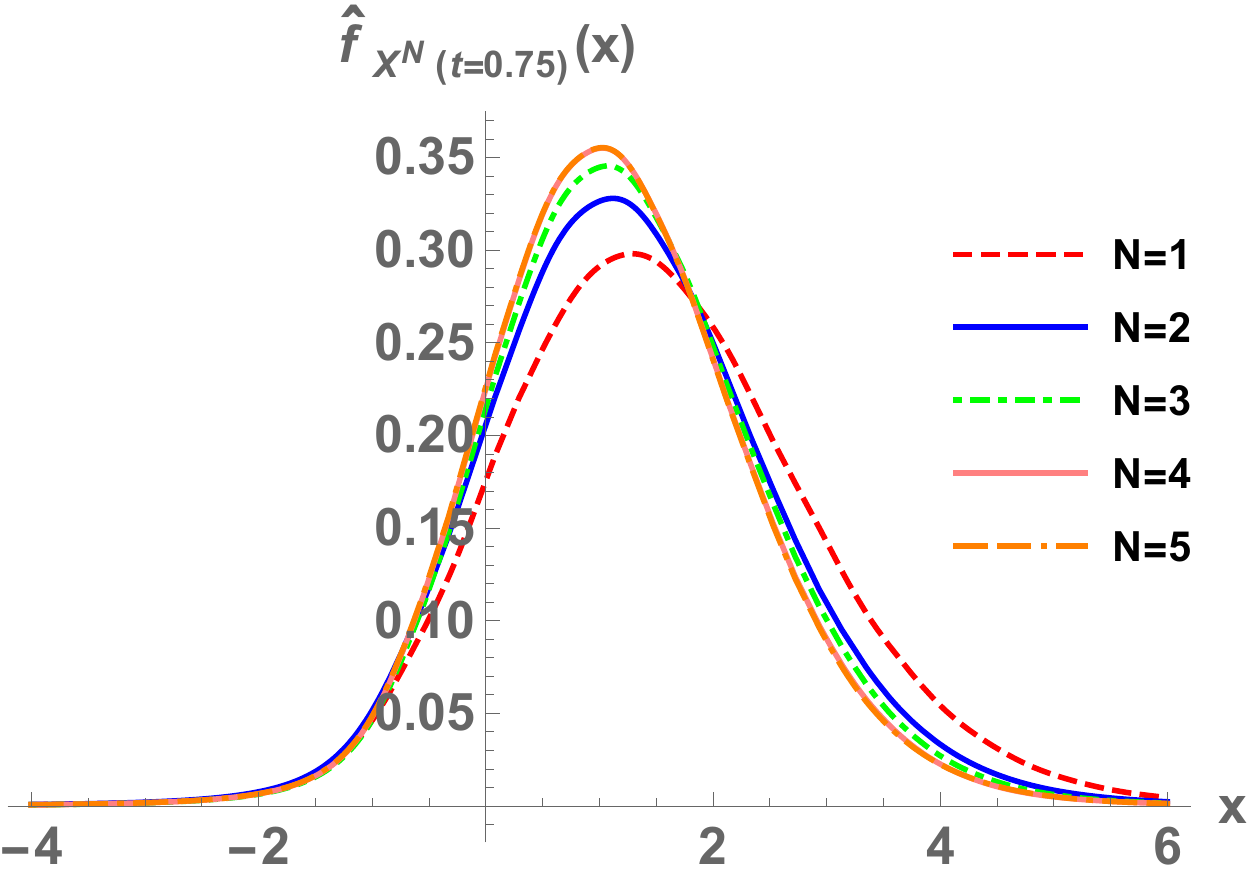}
    \includegraphics[width=0.32\textwidth]{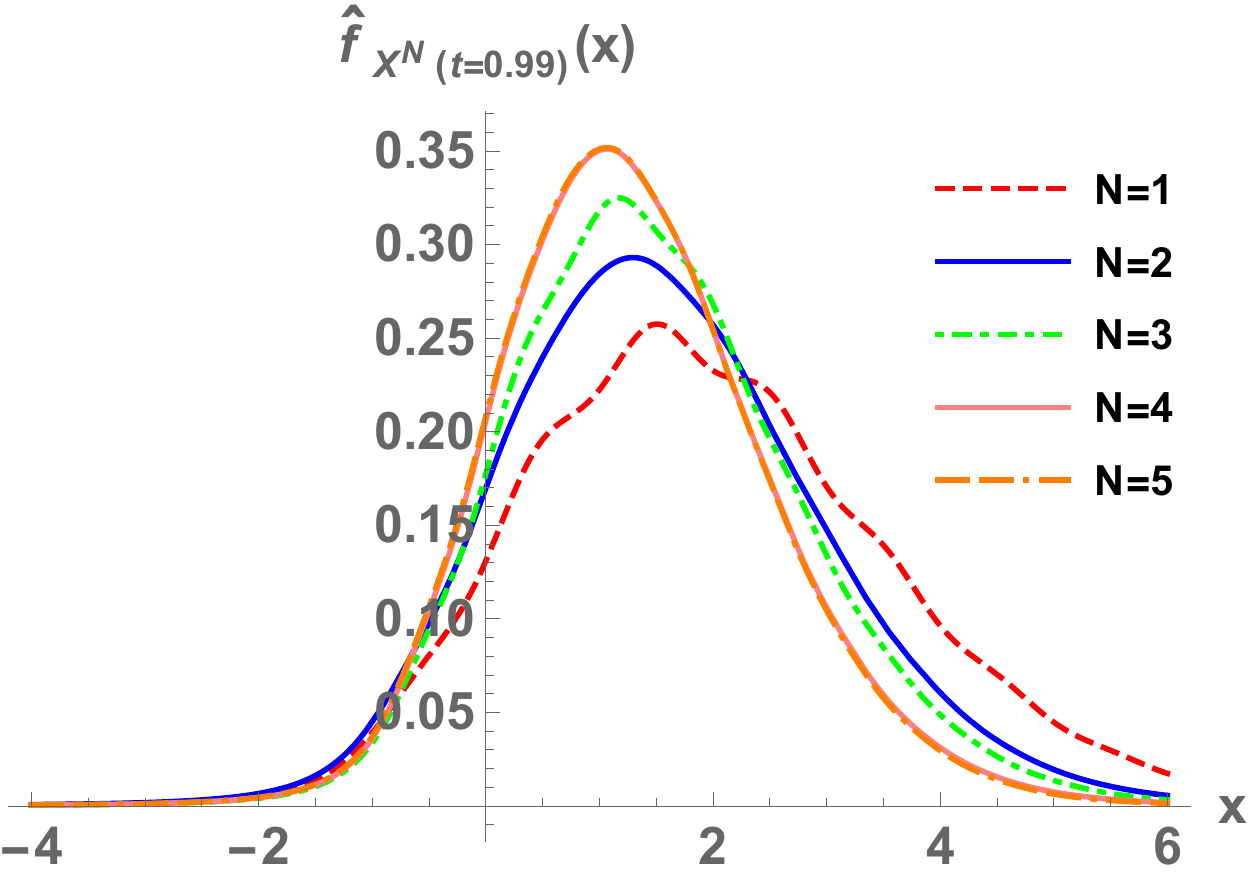}
    \caption{Graphical representations of the Monte Carlo estimates $\hat f_{X^N(t)} (x)$ at $t=0.25$ (left),  $t=0.75$ (center) and $t=0.99$ (right), with orders of truncation $N$ as indicated. 
    This figure corresponds to Example~\ref{example2}.}
        \label{figure2}
    \end{center}
  \end{figure}

To better assess the convergence, Figure~\ref{figure2error} shows the successive differences $\delta\epsilon^N(x,t)$ defined in~\eqref{diff_loc} at the same times as in Figure~\ref{figure2}; these differences are decreasing to $0$ pointwise as theoretically expected, see Theorem~\ref{te1}. 
As pointwise convergence of densities implies $\leb^1(\mathbb{R})$ convergence, we report in Table~\ref{table2error} the consecutive norms $\Delta\epsilon^N(t)$ defined by~\eqref{diff_norm}. The norms decay, albeit not monotonically; for instance, when $t=0.25$ the difference is larger for $N=4$ than in $N=3$. 

\begin{figure}[hbt!]
  \begin{center}
    \includegraphics[width=0.32\textwidth]{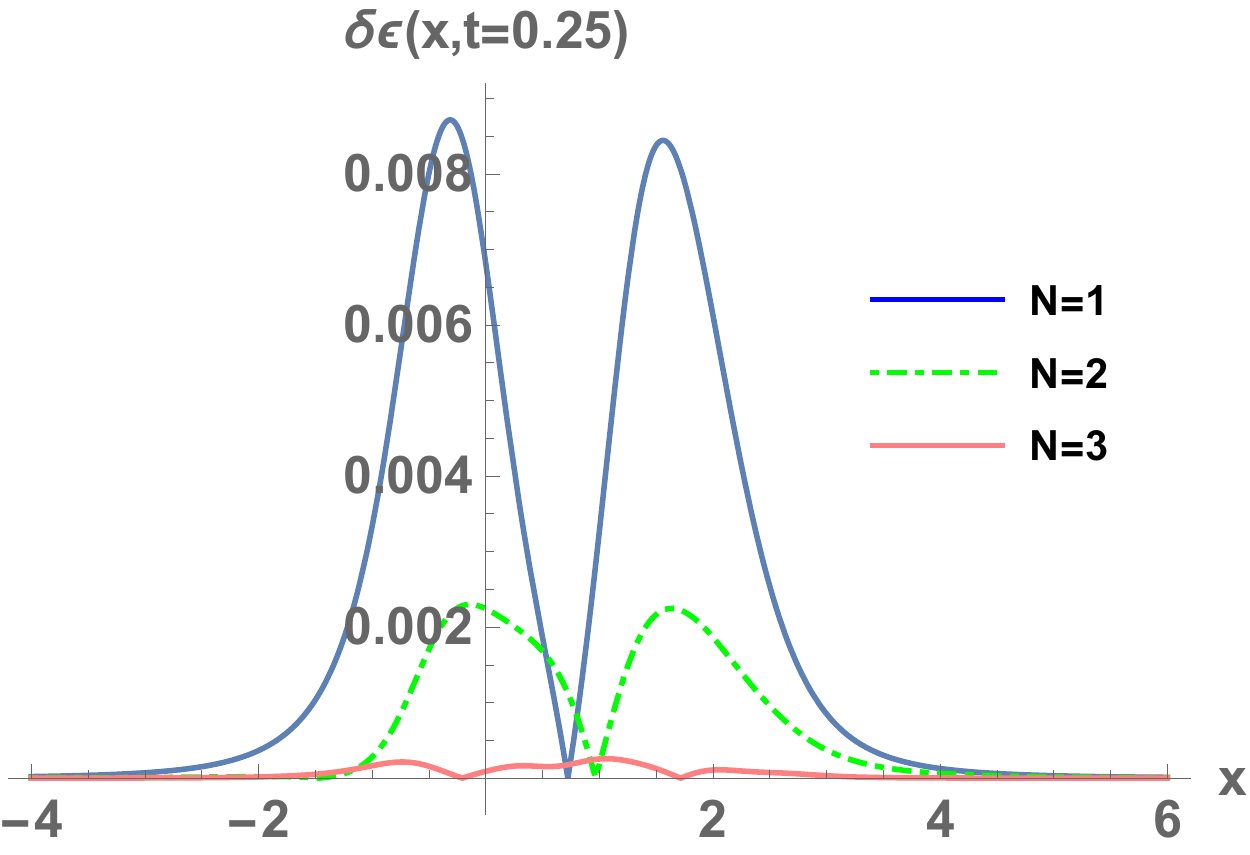}
    \includegraphics[width=0.32\textwidth]{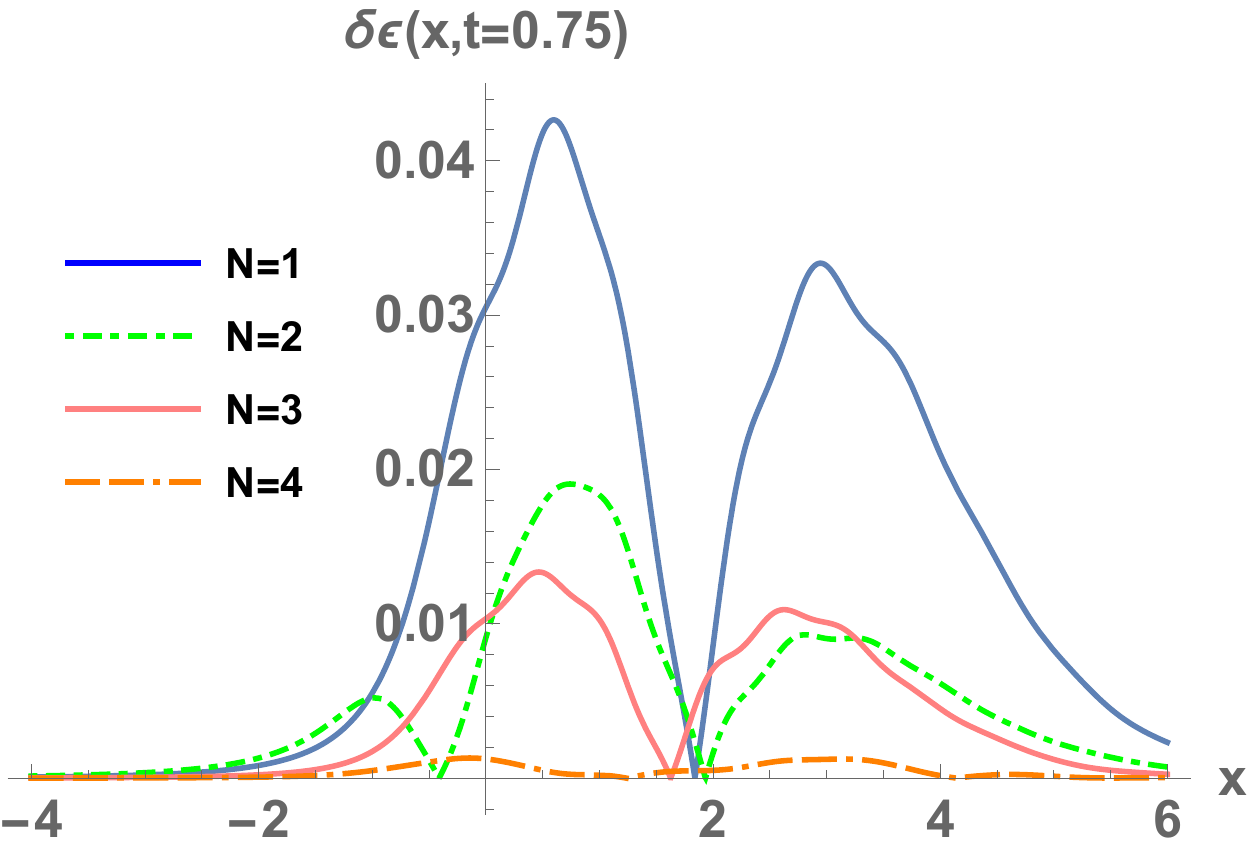}
    \includegraphics[width=0.32\textwidth]{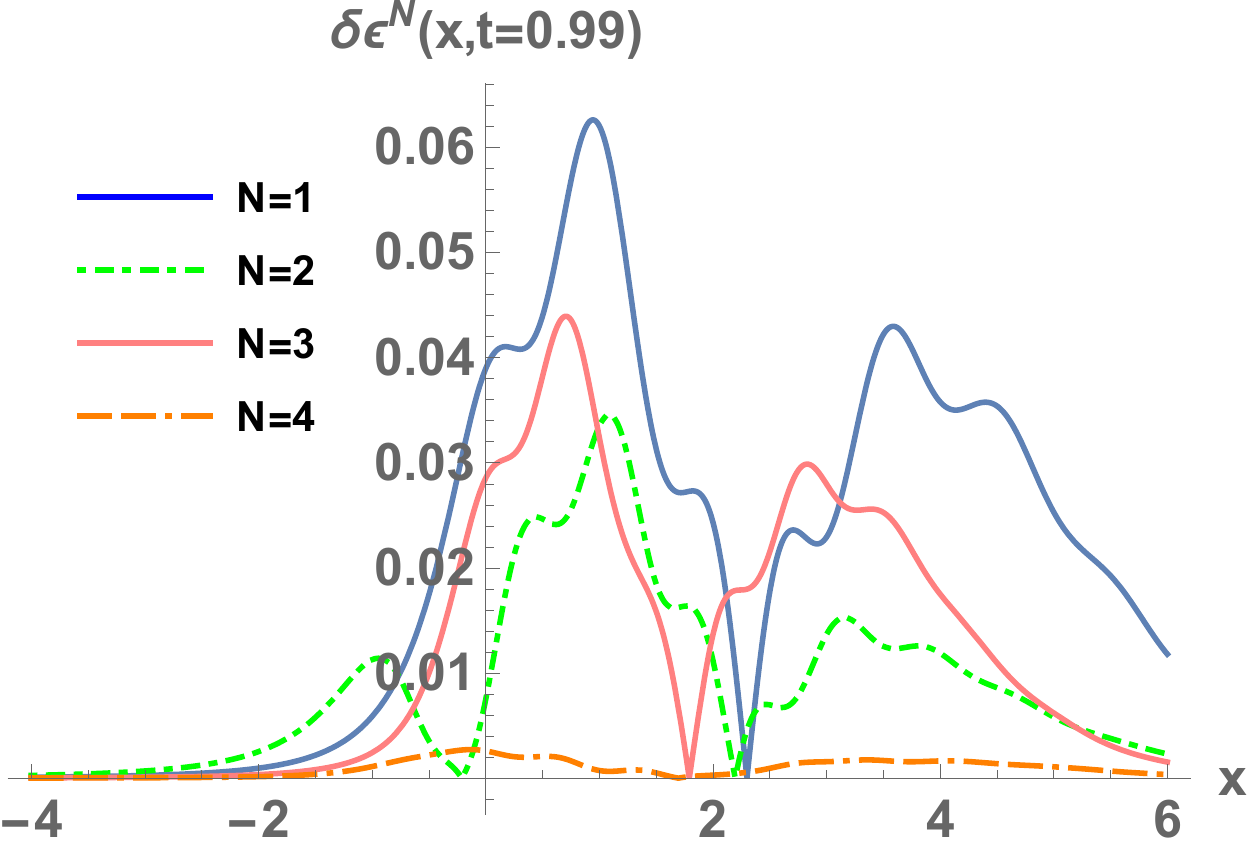}
    \caption{Differences in consecutive estimates $\delta\epsilon^N(x,t)$  (see~\eqref{diff_loc}) at $t=0.25$ (left),  $t=0.75$ (center) and $t=0.99$ (right), and for orders of truncation as indicated. 
    The plots correspond to Example~\ref{example2}.}
        \label{figure2error}
    \end{center}
  \end{figure}
    
\begin{table}[hbt!]
\footnotesize
\begin{center}
\begin{tabular}{|c|cccc|} \hline
 & $N=1$ & $N=2$ & $N=3$ & $N=4$  \\ 
$t=0.25$ & $0.0215530$ & $0.00607417$ & $0.000600201$ & $0.00167170$  \\ 
% $t=0.5$ & $0.0776298$ & $0.0226985$ & $0.00913760$ & $0.00318041$  \\ 
$t=0.75$ & $0.147952$ & $0.0545970$ & $0.0436801$ & $0.00419704$  \\
$t=0.99$ & $0.225868$ & $0.0945261$ & $0.127495$ & $0.00985133$  \\ \hline
\end{tabular}
\caption{Norm $\Delta \epsilon^N(t)$ of differences in consecutive estimates (see~\eqref{diff_norm}) for different times $t$ and truncation orders $N$. This table corresponds to Example~\ref{example2}.}
\label{table2error}
\end{center}
\end{table}
    
Figure~\ref{figure2error2} reports in the left plot the error estimates $\log E^N(t)$ defined in~\eqref{err_estim}. We see that the errors decrease quickly before stagnating because of the sampling error. 
This example, despite being more complex than the previous one in Example~\ref{example1}, in terms of dimensionality, requires smaller orders $N$, since for $t\in (-1,1)$ we have $|t-t_0|=|t|<1$, which implies $|t-t_0|^n\stackrel{n\rightarrow\infty}{\longrightarrow}0$.
The right plot of Figure~\ref{figure2error2} aims at showing the relation between the errors $E^N(t)$ and the successive differences $\Delta\epsilon^N(t)$. Specifically, for the times $t$ shown, a collinearity is found in log-scale through the model~(\ref{colin1}). In other words, the decay pattern of the consecutive differences characterizes the convergence of the global error as long as the bias error dominates the sampling error.

\begin{figure}[hbt!]
  \begin{center}
    \includegraphics[width=0.36\textwidth]{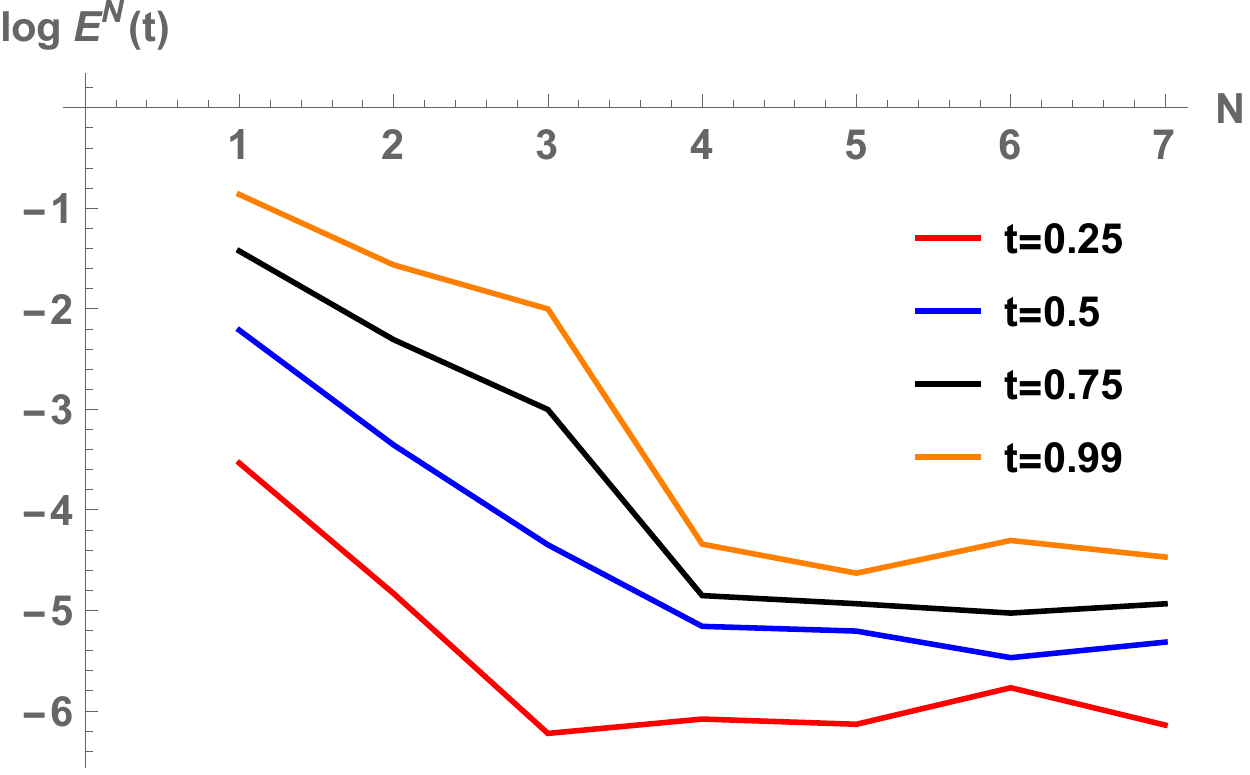}
    \includegraphics[width=0.36\textwidth]{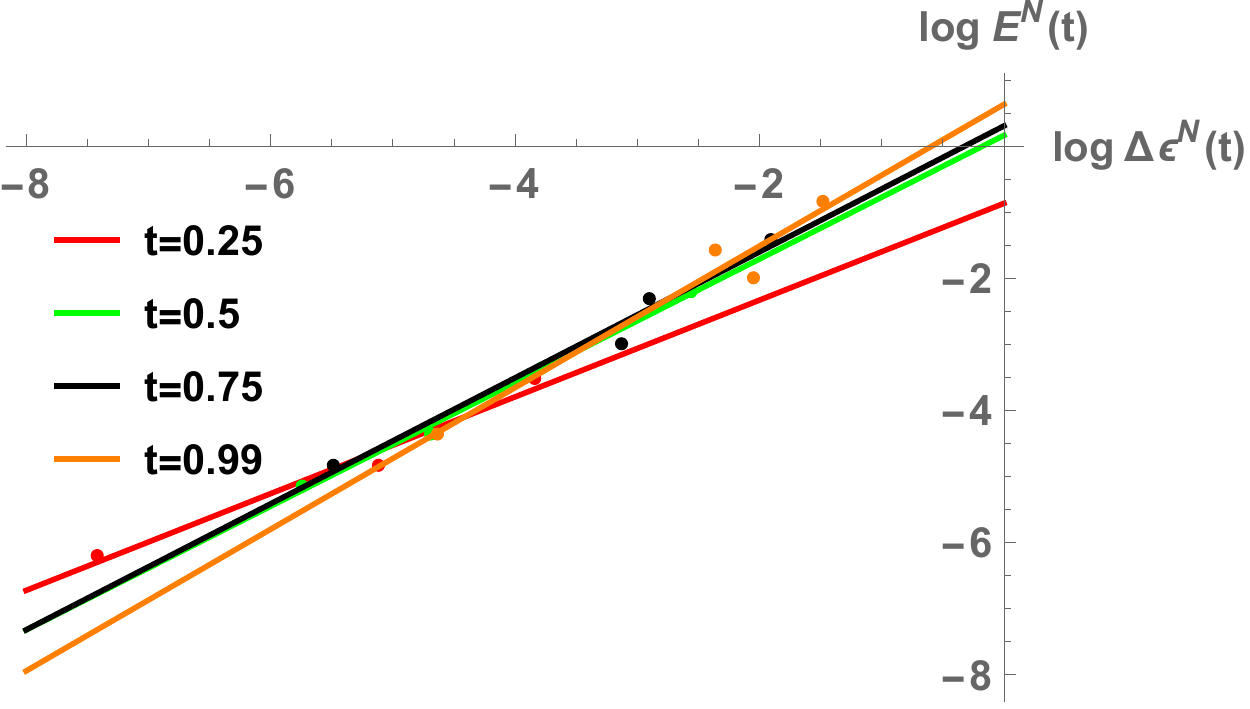}
    \caption{Left: error $E^N(t)$ in~\eqref{err_estim}, for different times as indicated. Right: relation between $\log E^N(t)$ and $\log \Delta\epsilon^N(t)$, 
    for $t=0.25$, 0.5, 0.75 and 0.99. 
    Also reported are linear regressions.
    This figure corresponds to Example~\ref{example2}.}
        \label{figure2error2}
    \end{center}
  \end{figure}
    
Figure~\ref{figmcerr2} plots the sampling error $\text{MCE}^P(t)$~\eqref{mcerr} for truncation order $N=7$, times $t=0.25$, 0.5, 0.75 and 0.99, nested samples of size $P\in\{100,200,400,800,1600,3200,6400,12800\}$, and $M=20,000$. Similar conclusions to Example~\ref{example1} are derived here. The decay pattern, although depending on the random numbers generated, is captured by the rate $1/\sqrt{P}$, as theoretically expected.

\begin{figure}[hbt!]
  \begin{center}
    \includegraphics[width=0.6\textwidth]{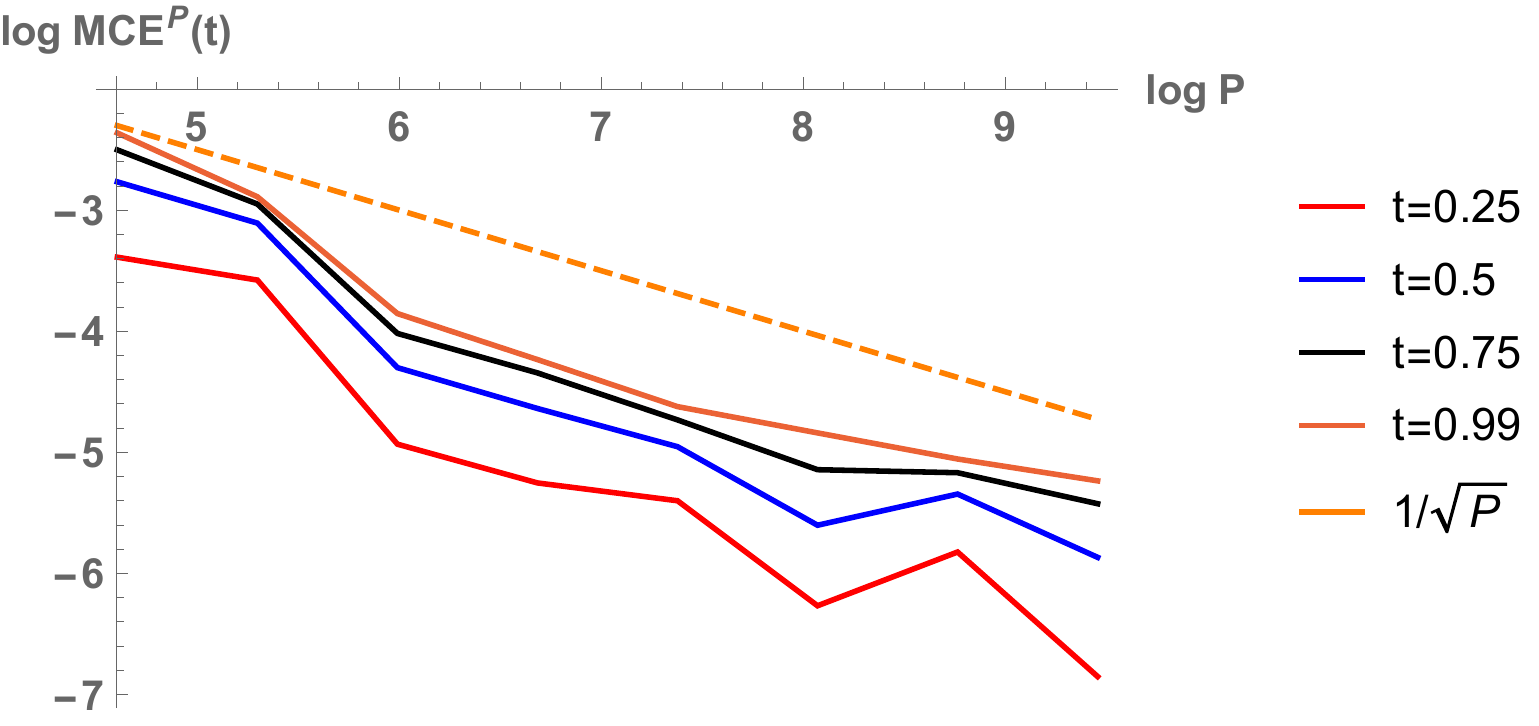}
    \caption{Sampling error~\eqref{mcerr} with the number of realizations $P$, for different times as indicated.
    This figure corresponds to Example~\ref{example2}.}
        \label{figmcerr2}
    \end{center}
  \end{figure}
    
\end{example}

%%%%%%%%%%%%%% Example 3

\begin{example} \label{example3} \normalfont
In this example, we consider the previous degree one polynomial problem, with the following independent distributions: $A_0=4$, $A_1\sim\text{Uniform}(0,1)$, $B_0\sim\text{Gamma}(2,2)|_{[0,4]}$, $B_1\sim\text{Bernoulli}(0.35)$, $Y_0\sim\text{Poisson}(2)$ and $Y_1\sim\text{Normal}(2,1)$. 
This example coincides with Example~\ref{example1}, except that $Y_0$ and $Y_1$ have been interchanged: now $Y_0$ is discrete, while $Y_1$ is absolutely continuous. This exchange puts this example in a different theoretical case compared to Example~\ref{example1}.

By Theorem~\ref{nostre}, the unique mean square solution is expressible as a random power series $X(t)=\sum_{n=0}^\infty X_n t^n$ that is mean square convergent for all $t\in\mathbb{R}$. According to Remark~\ref{rmk_Y1}, we can approximate the probability density function of $X(t)$, $f_{X(t)}(x)$, for $t\neq0$. 

Figure~\ref{figure3} reports the approximations $\hat f_{X^N(t)}(x)$ at times $t=0.5$, $1$ and $1.5$. As $N$ grows, the graphical representations tend to overlap, denoting the convergence of the expansions.
The densities are all smooth, as expected from the smoothness of $f_{Y_1}$, except for $\hat f_{X^{N=12}(t=1.5)}(x)$ whose estimate presents noisy features. 

\begin{figure}[hbt!]
  \begin{center}
    \includegraphics[width=0.32\textwidth]{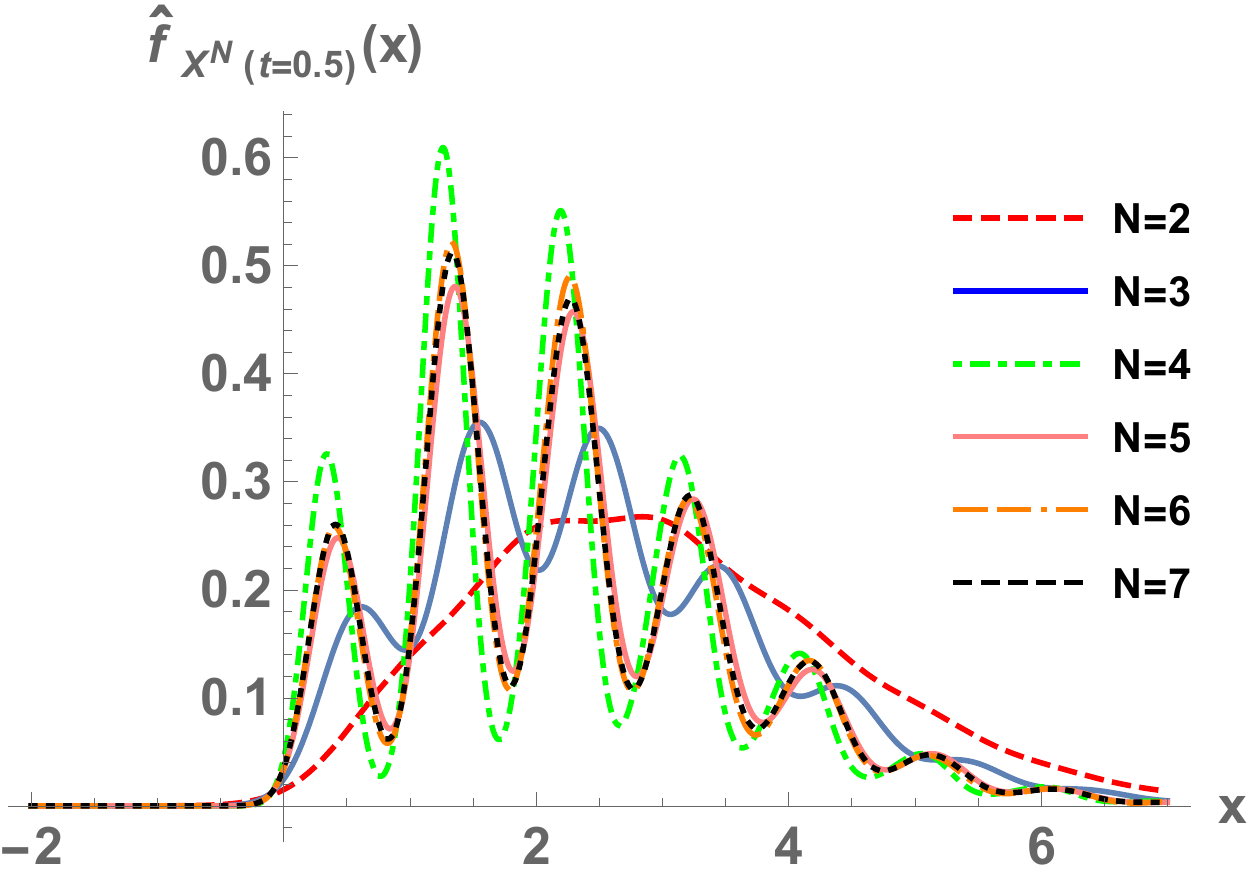}
    \includegraphics[width=0.32\textwidth]{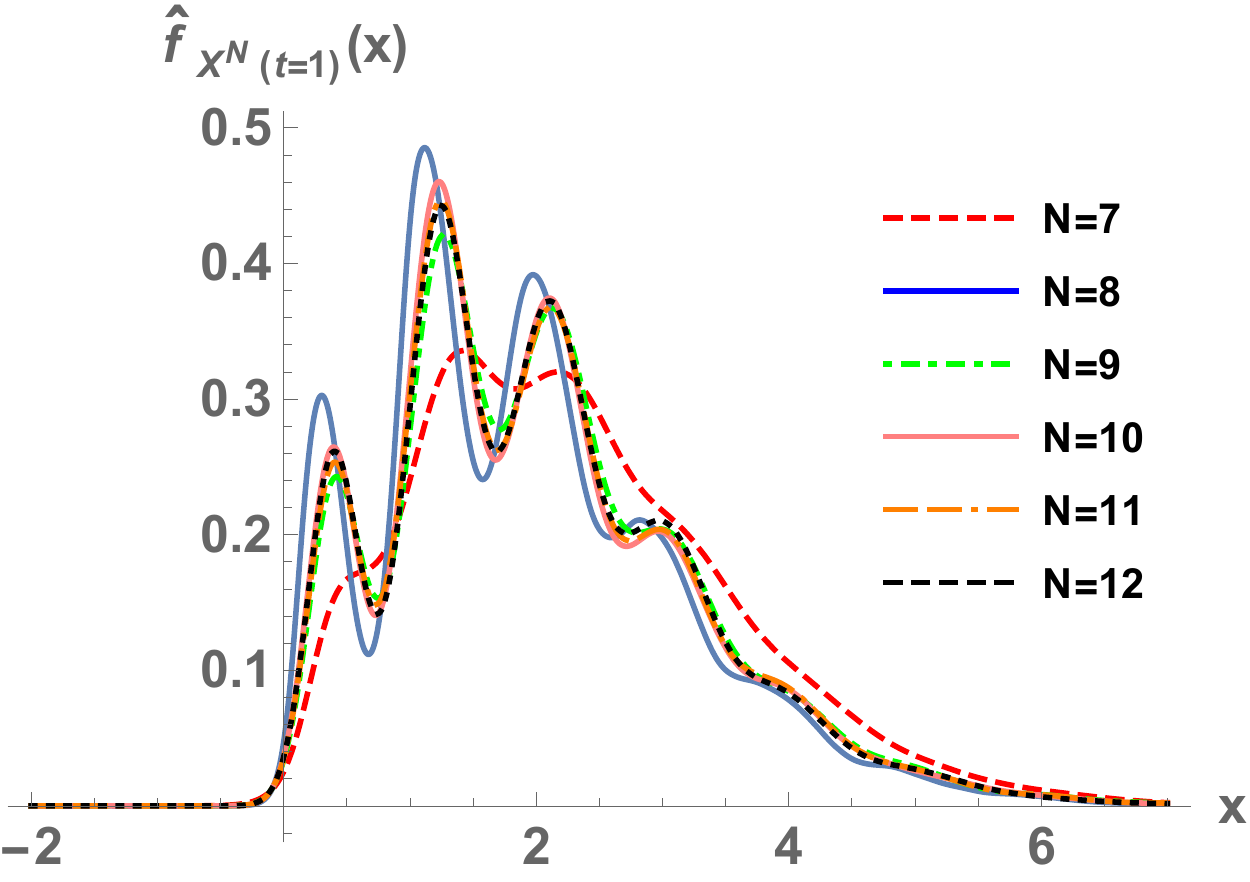}
    \includegraphics[width=0.32\textwidth]{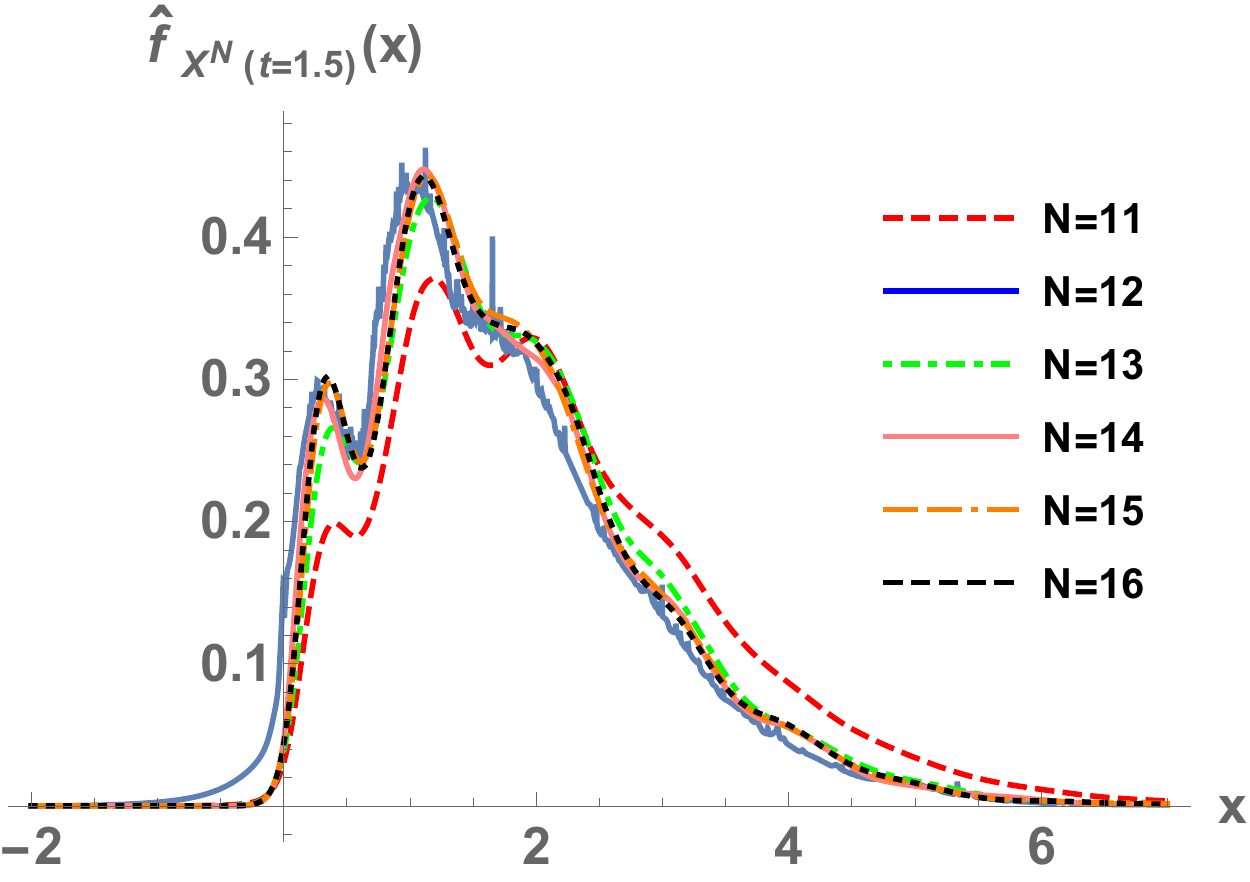}
    \caption{Graphical representations of the Monte Carlo estimates $\hat f_{X^N(t)} (x)$ at $t=0.5$ (left), $t=1$ (center) and $t=1.5$ (right), with varying orders of truncation $N$ as indicated. This figure corresponds to Example~\ref{example3}.}
        \label{figure3}
    \end{center}
  \end{figure}
    
The noisy features in $\hat f_{X^{N=12}(t=1.5)}(x)$ are due to several reasons. 
First, there is a computational issue of Mathematica\textsuperscript{\tiny\textregistered} caused by numerical overflow-underflow when too small or too large quantities are involved (for instance $\exp(z)$ for $|z|\gg1$). 
Some sample paths of $S_1^{N=12}(t)$ are vanishing near $t=1.5$, thus making the denominator $S_1^{N=12}(t)$ in the definition of $V_N(t)$ (in~\eqref{VN} by for the role of $Y_0$ and $Y_1$ exchanged) very small, with a loss of precision as a result. 
This is illustrated in Figure~\ref{sample_path_S2}, where we show some randomly generated sample paths of $S_1^{N=12}(t)$. We also report sample paths for $N=11$ and $N=13$ for comparison. 
Second, and not totally unrelated to the numerical overflow, we have $\mathbb{V}[1/|S_1^{N}(t=1.5)|]=\infty$ for $N=12$ when it remains finite for the other values of $N$ shown. As a result, the variance $\sigma^2_{N=12}$ in the Monte Carlo method (see \eqref{sigma2N}) is unbounded or very large for $N=12$, while it behaves well for other $N$, as illustrated in the last panel of Figure~\ref{sample_path_S2} (bottom right plot). 
As a result, for $N=12$, the convergence of the Monte Carlo procedure is slowed down due to the large or infinite variance, the rate $\mathcal{O}(1/\sqrt{M})$ is not obtained (see the discussion from Section~\ref{comp_aspects}), and some noisy features plague the estimator.

Luckily, the noise in $\hat f_{X^{N=12}(t=1.5)}(x)$ is not present for $N>12$. In situations where large or infinite variance occurs for some $N$, one should focus on the truncation orders $N$ for which the approximation $\hat f_{X^N(t)}(x)$ behaves nicely, without noise. In this manner, correct approximations to $f_{X(t)}(x)$ are obtained with a feasible number of samples.

\begin{figure}[hbt!]
  \begin{center}
    \includegraphics[width=0.35\textwidth]{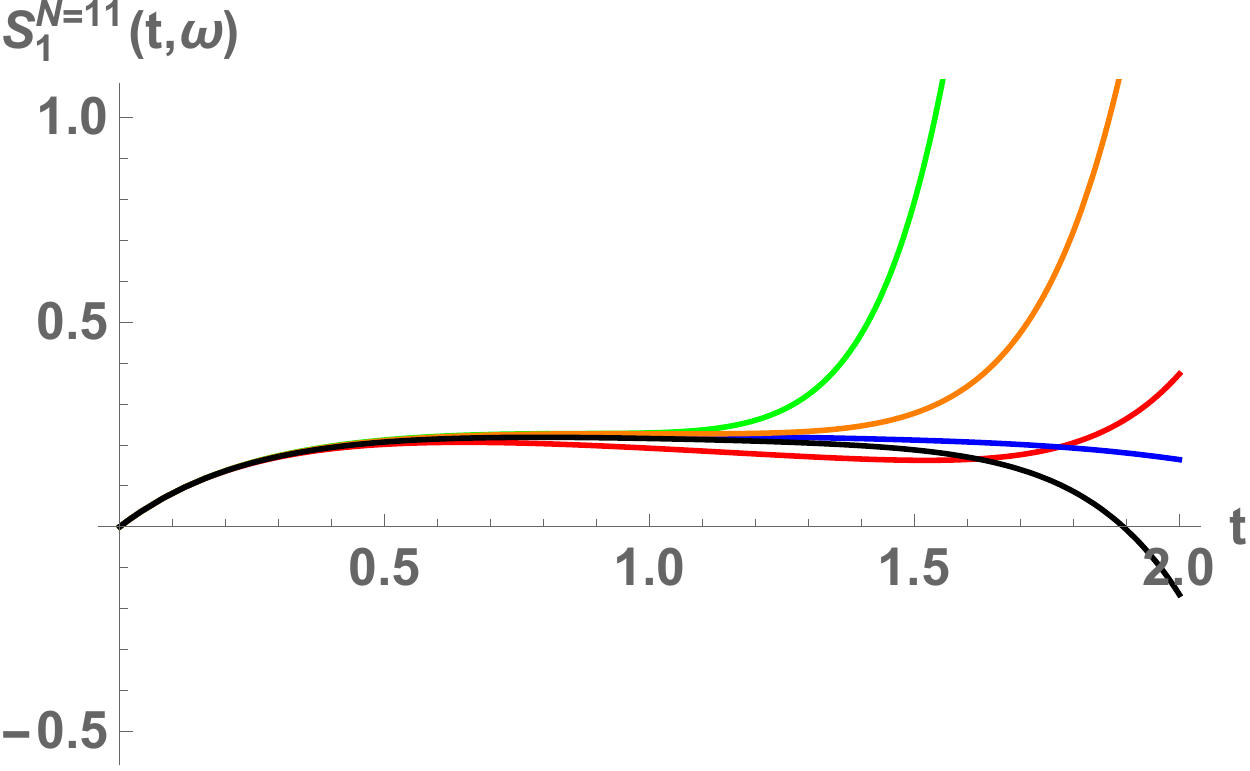}
    \includegraphics[width=0.35\textwidth]{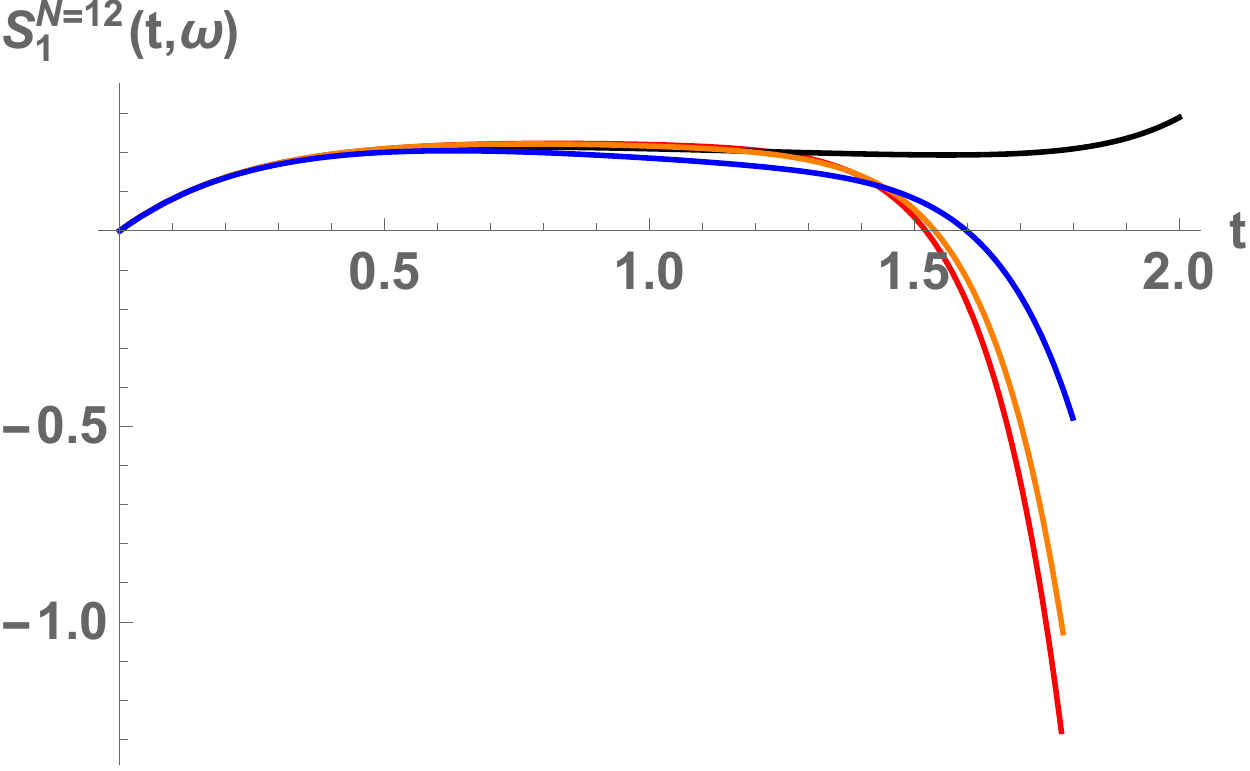}
    \includegraphics[width=0.35\textwidth]{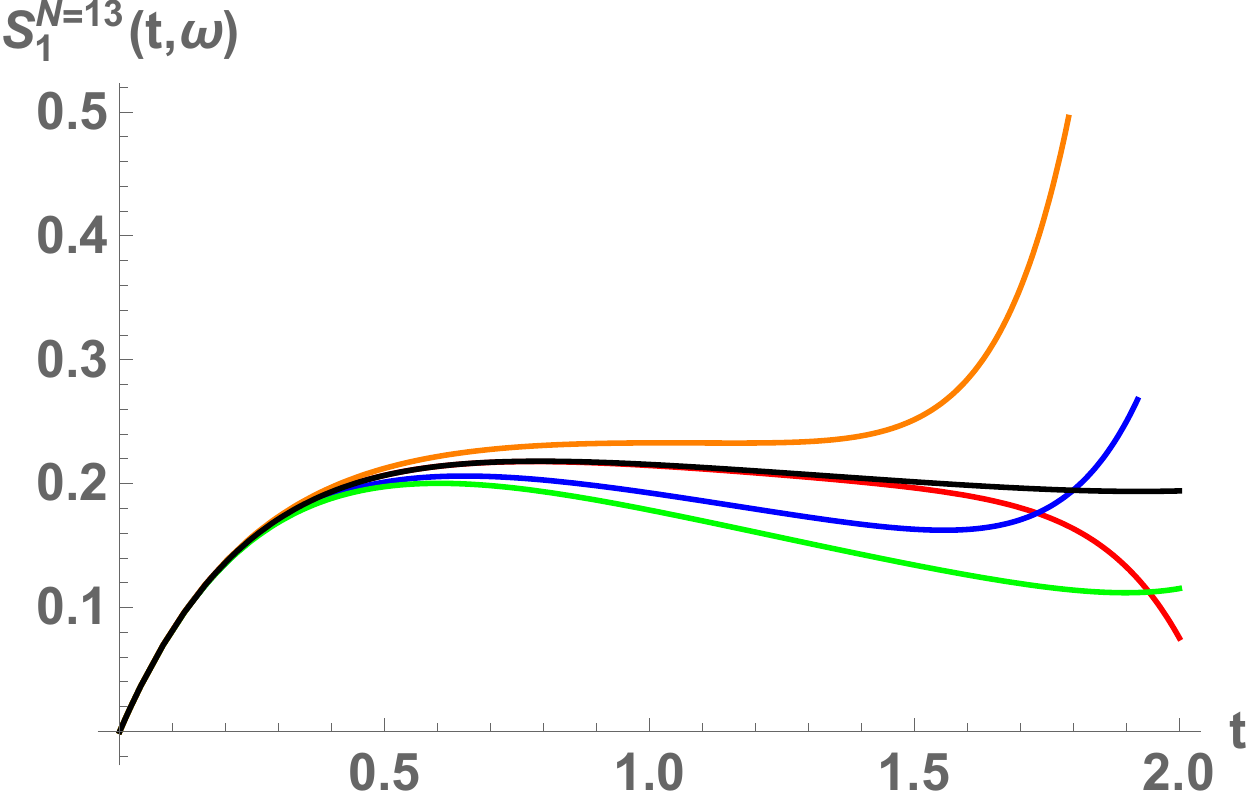}
    \includegraphics[width=0.35\textwidth]{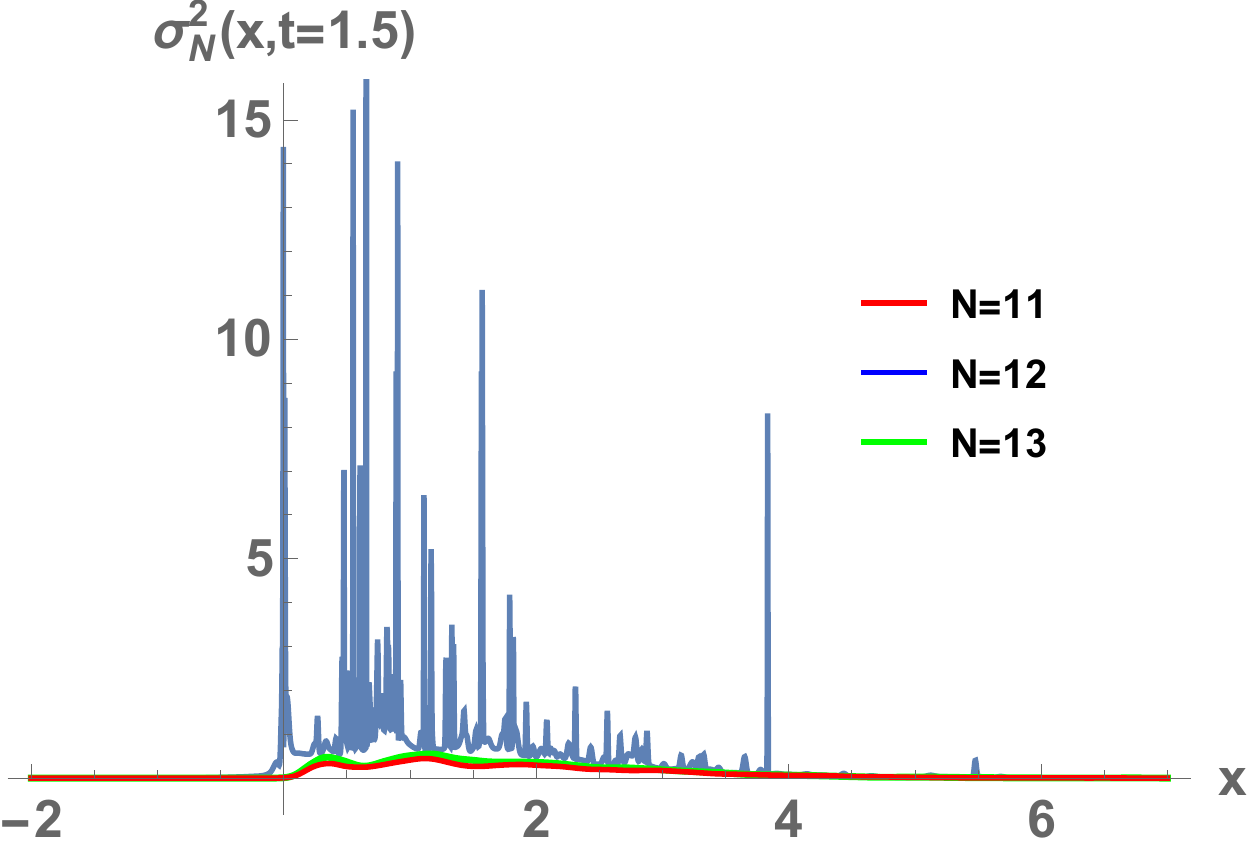}
    \caption{Random trajectories of $S_1^{N}(t)$ for $N=11$, 12 and $13$. 
    For $N=12$, observe that some trajectories vanish very close to $t=1.5$, while for $N\ne 12$ the trajectories remain away from 0. The plot in the bottom right panel shows the corresponding empirical estimates of $\sigma_N^2(x,t=1.5)$. Observe that for $N=11$ and $N=13$ the variances are small, while $\sigma^2_{N=12}$ is large (the range has been restricted to $15$).
    This figure corresponds to Example~\ref{example3}.}
        \label{sample_path_S2}
    \end{center}
  \end{figure}
    
  %   \begin{figure}[hbt!]
  % \begin{center}
  %   \includegraphics[width=8cm]{sigma2.pdf}
  %   \caption{Plot of $\sigma_N^2(x,t=1.5)$, for $N=11\textendash13$. 
  %           This figure corresponds to Example~\ref{example3}.}
  %       \label{sigma2}
  %   \end{center}
  % \end{figure}
    
Figure~\ref{figure3error} (left and center plots) presents the consecutive differences $\delta\epsilon^N(x,t)$ given by~\eqref{diff_loc}, for times $t=1$ and $1.5$. These consecutive differences are not monotonically decreasing with $N$, although a decay pattern towards $0$ is perceptible. Further, the impact of the noisy estimate $\hat f_{X^{N=12}(t=1.5)}(x)$ is clearly visible in the reported differences. The plots are entirely consistent with the theoretical results and Remark~\ref{rmk_Y1}. In Table~\ref{table3error}, we report the corresponding $\leb^1(\mathbb{R})$ norms $\Delta\epsilon^N(t)$ (see~\eqref{diff_norm}) as a summary of Figure~\ref{figure3error}. The last plot of Figure~\ref{figure3error} reports the estimate errors $\log E^N(t)$ in~\eqref{err_estim}. Again, the convergence and the sampling error are observed. This example also emphasizes that the Fr\"obenius method deteriorates for large times, as $N$ needs to increase with $t$ to maintain accurate approximations.

%\begin{figure}[hbt!]
%  \begin{center}
%    \includegraphics[width=0.4\textwidth]{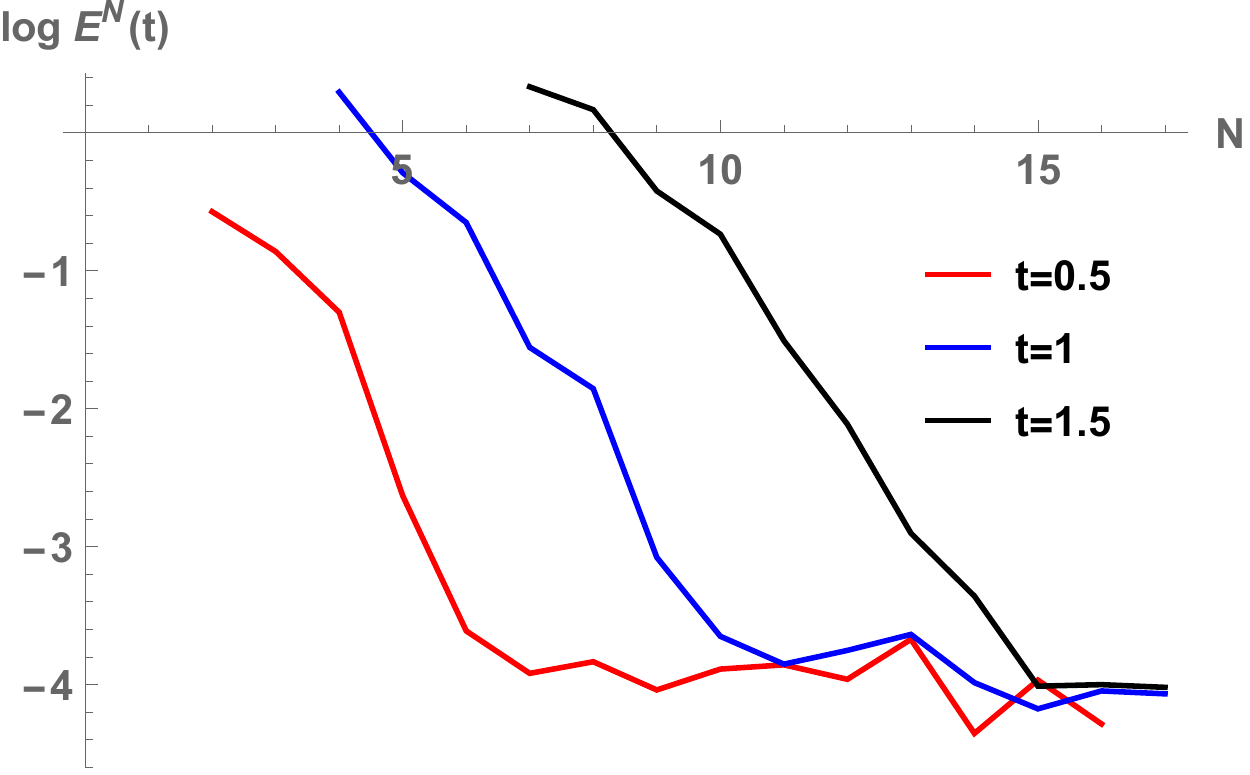}
%    \caption{Error $E^N(t)$ (see~\eqref{err_estim}), for different times as indicated. This figure corresponds to Example~\ref{example3}.}
%        \label{figure3error2}
%    \end{center}
%  \end{figure}

\begin{figure}[hbt!]
  \begin{center}
    \includegraphics[width=0.32\textwidth]{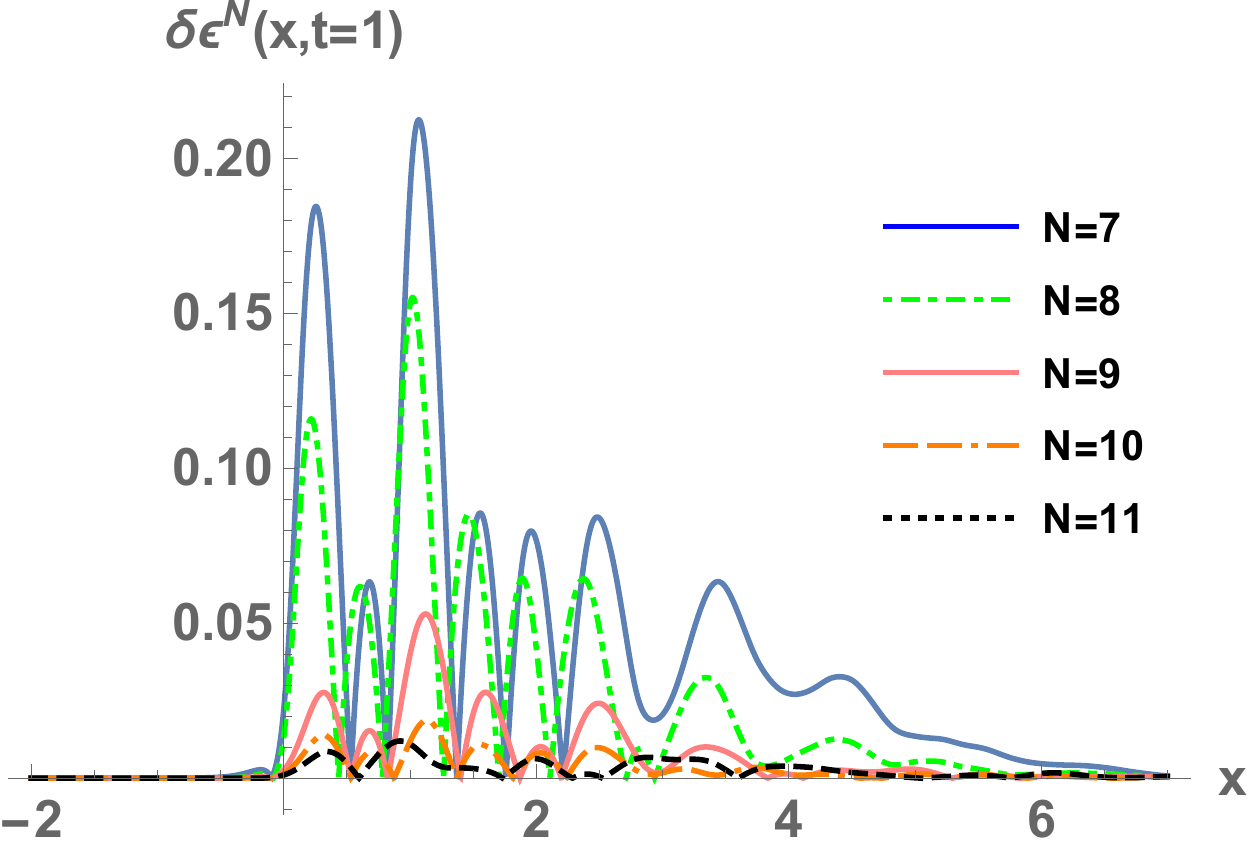}
    \includegraphics[width=0.32\textwidth]{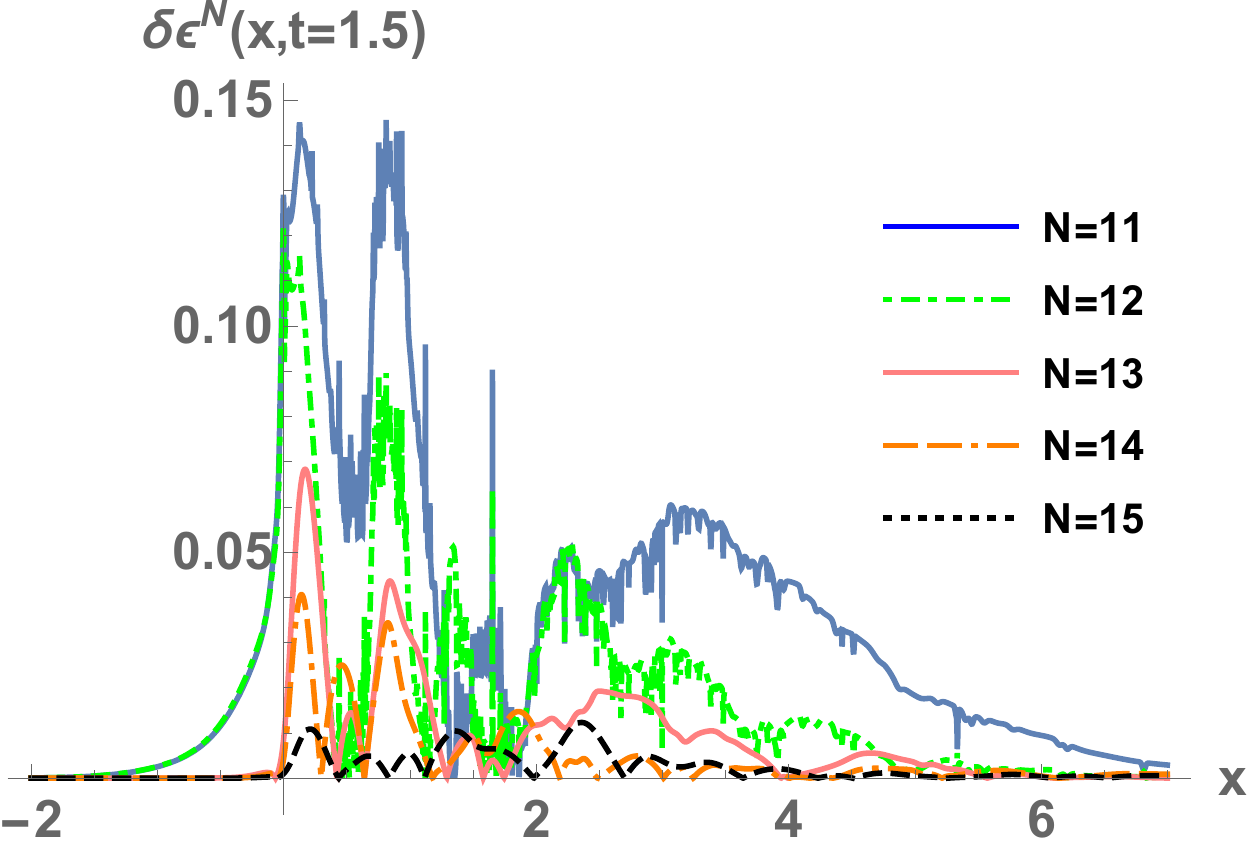}
		\includegraphics[width=0.32\textwidth]{error3d.pdf}
    \caption{Differences in consecutive estimates $\delta\epsilon^N(x,t)$ at $t=1$ (left) and $t=1.5$ (center), with orders of truncation as indicated. The last plot presents the errors $E^N(t)$ (see~\eqref{err_estim}), for different times as indicated. This figure corresponds to Example~\ref{example3}.}
        \label{figure3error}
    \end{center}
  \end{figure}

\begin{table}[hbt!]
\footnotesize
\begin{center}
\begin{tabular}{|c|ccccc|} \hline
 & $N=2$ & $N=3$ & $N=4$ & $N=5$ & $N=6$  \\ 
$t=0.5$ & $0.320171$ & $0.618382$ & $0.333094$ & $0.0893759$ & $0.0291202$  \\ \hline
$$ & $N=7$ & $N=8$ & $N=9$ & $N=10$ & $N=11$  \\ 
$t=1$ & $0.308293$ & $0.185148$ & $0.0605758$ & $0.0256469$ & $0.0216692$  \\ \hline
$$ & $N=11$ & $N=12$ & $N=13$ & $N=14$ & $N=15$  \\ 
$t=1.5$ & $0.301694$ & $0.155185$ & $0.0677696$ & $0.0375570$ & $0.0202081$  \\ \hline
\end{tabular}
\caption{Norm $\Delta \epsilon^N(t)$ of differences in consecutive estimates (see~\eqref{diff_norm}) for different times $t$ and truncation orders $N$. This table corresponds to Example~\ref{example3}.}
\label{table3error}
\end{center}
\end{table}

\end{example}

\begin{example} \label{example4} \normalfont
Consider the problem with infinite expansions for $A$ and $B$, $A_n\sim\text{Beta} (11,15)$ for $n\geq0$, $B_0=0$, $B_n=1/n^2$ for $n\geq1$, $Y_0\sim \text{Uniform}(-1,1)$ and $Y_1\sim \text{Exponential}(2)$. These random inputs are again assumed to be independent. In contrast with Example~\ref{example2}, the probability density function of $Y_0$ has now two discontinuity points at $y_0 = \pm 1$, while $Y_1$ follows an absolutely continuous law. Hence, Theorem~\ref{te1} cannot be employed here.
The mean square analytic solution $X(t)=\sum_{n=0}^\infty X_n t^n$ given by Theorem~\ref{nostre} is defined on $(-1,1)$ and we must apply Theorem~\ref{te1super} to approximate $f_{X(t)}(x)$ for $t\neq0$. 
We compute the approximations at time $t=0.99$ (near the limit $1$), with orders of truncation $N=1\textendash5$. Figure~\ref{figure4} (left plot) depicts the graphs of $\hat f_{X^N(t)}(x)$. Promptly, the successive approximations of the density function tend to superimpose, thus entailing rapid convergence to the target density function $f_{X(t)}(x)$. In contrast to Example~\ref{example2}, the non-differentiability of the approximated density functions inherited from $f_{Y_0}$ is evident (here one cannot apply Theorem~\ref{te2}). Thereby, our method can capture peaks induced by non-differentiability.
This feature is a definite advantage of our method, compared to classical sample paths approximation methods where a kernel density estimation of the density would smear-out the approximation at the non-differentiability points.

A richer analysis of the convergence in this example is provided in the centered plot of Figure~\ref{figure4} and Table~\ref{table4error}, which depict consecutive differences $\delta\epsilon^N(x,t)$ (see \eqref{diff_loc}) and their norms $\Delta\epsilon^N(t)$ (see \eqref{diff_norm}), respectively. Even though the errors are not decreasing monotonically to $0$ (as in the previous Example~\ref{example3}), the convergence is evident and follows from Theorem~\ref{te1super}. Finally, in Figure~\ref{figure4} (last panel) we also plot the error estimate $\log E^N(t)$ (see \eqref{err_estim}), for distinct times. Similar to Example~\ref{example2}, the plot shows that this example needs small orders $N$ for all $t\in (-1,1)$ because of the decay of $|t|^N$. 

\begin{figure}[hbt!]
  \begin{center}
    \includegraphics[width=0.32\textwidth]{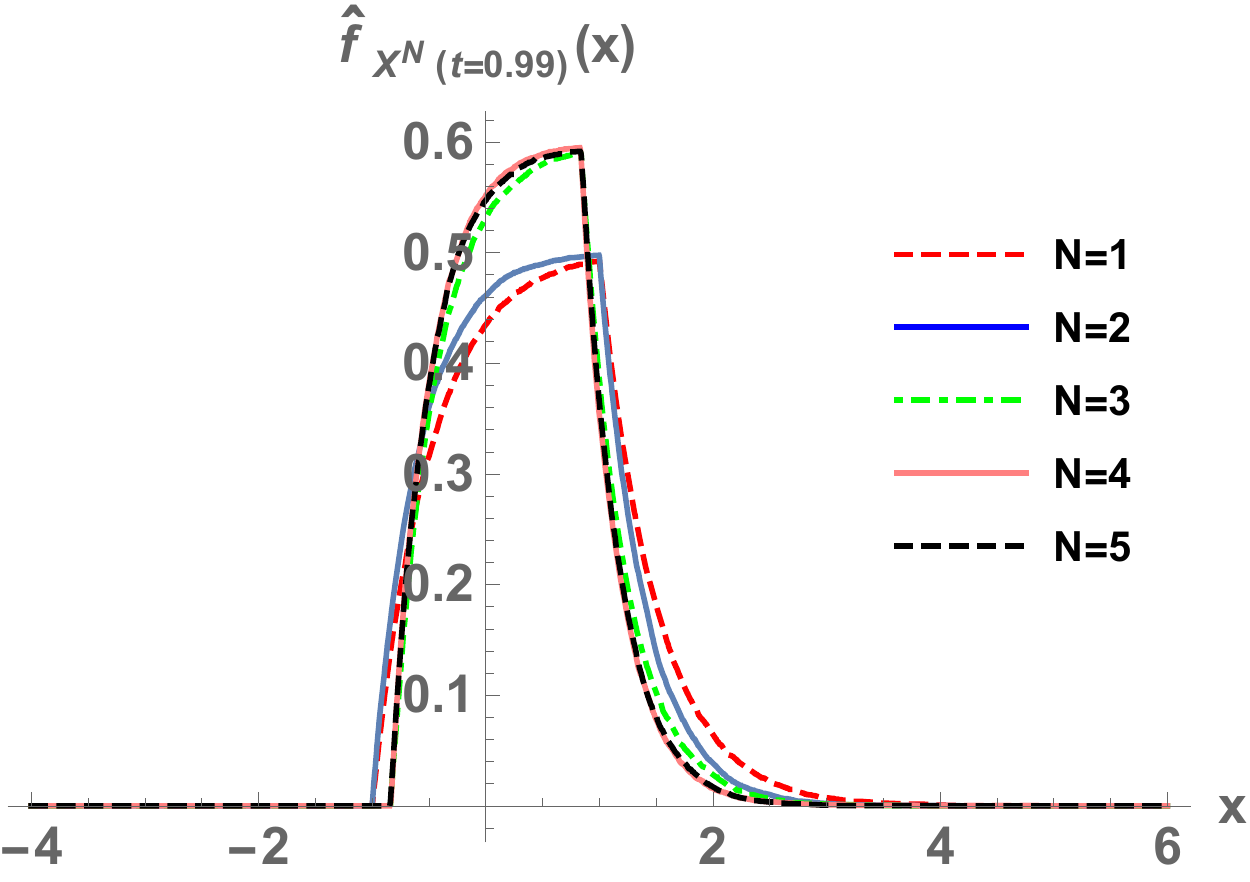}
    \includegraphics[width=0.32\textwidth]{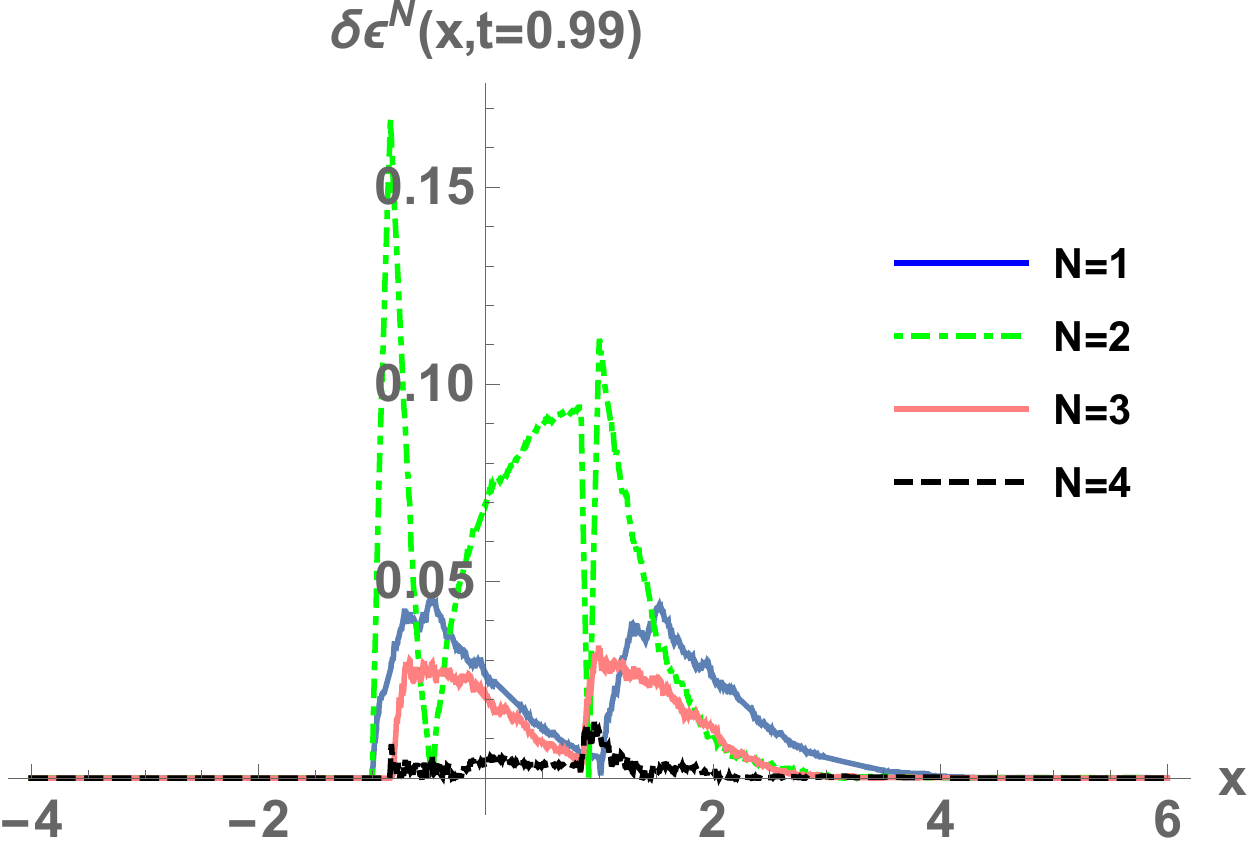}
    \includegraphics[width=0.32\textwidth]{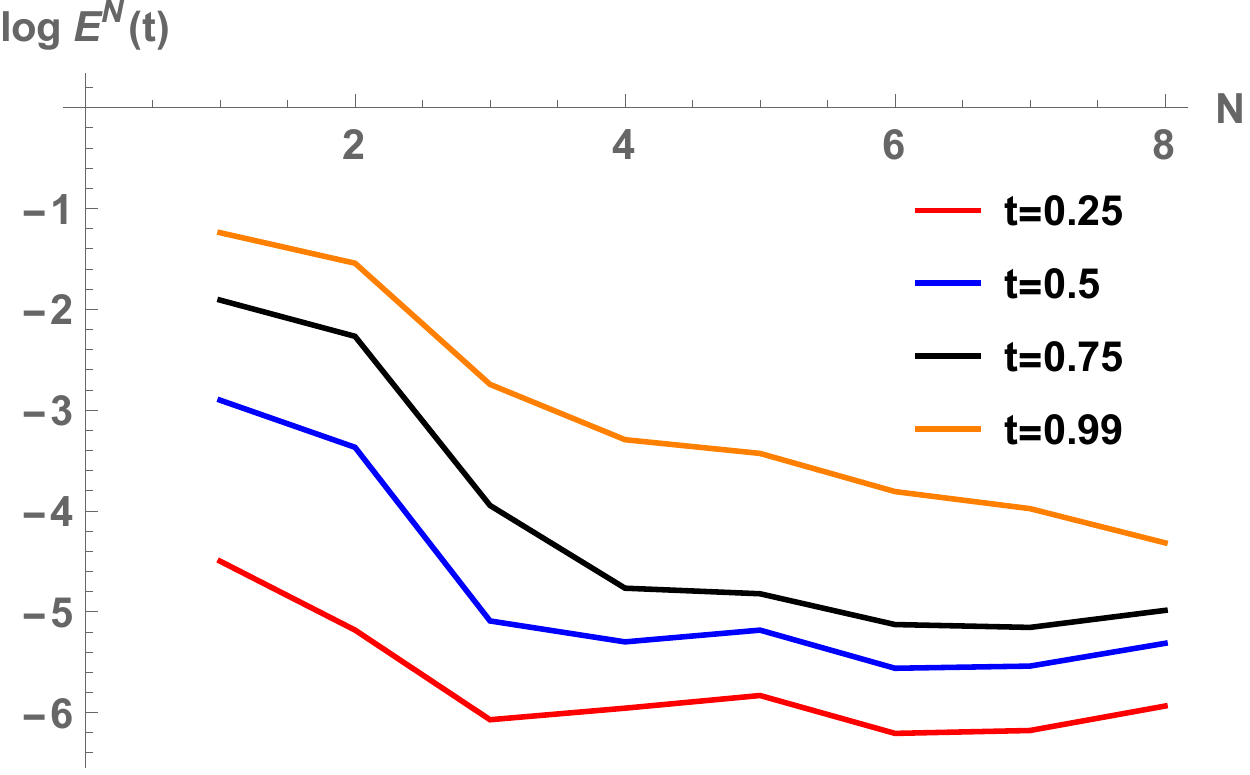}
    \caption{Graphical representation of the Monte Carlo estimates $\hat f_{X^N(t)} (x)$ at $t=0.99$ (left), with orders of truncation $N=1\textendash5$. Differences in consecutive estimates $\delta\epsilon^N(x,t)$ (see~\eqref{diff_loc}) at $t=0.99$ (center), with orders of truncation $N=1\textendash4$. Error $E^N(t)$ (see~\eqref{err_estim}), for different times as indicated (right). This figure corresponds to Example~\ref{example4}.}
        \label{figure4}
    \end{center}
  \end{figure}

%\begin{figure}[hbt!]
%  \begin{center}
%    \includegraphics[width=0.32\textwidth]{}
%    \includegraphics[width=0.32\textwidth]{}
    % \includegraphics[width=7.43cm]{}
%    \includegraphics[width=0.32\textwidth]{error4e.pdf}
%    \caption{Differences in consecutive estimates $\delta\epsilon^N(x,t)$  (see~\eqref{diff_loc}) at $t=0.25$ (left), $t=0.5$ (center) and $t=0.99$ (right), with orders of truncation $N=1\textendash4$ in all cases. This figure corresponds to Example~\ref{example4}.}
%        \label{figure4error}
%    \end{center}
%  \end{figure}
    
    \begin{table}[hbt!]
\footnotesize
\begin{center}
\begin{tabular}{|c|cccc|} \hline
 & $N=1$ & $N=2$ & $N=3$ & $N=4$  \\ 
$t=0.25$ & $0.00666810$ & $0.00482414$ & $0.00169730$ & $0.00275401$  \\
$t=0.5$ & $0.0266447$ & $0.0301975$ & $0.00469989$ & $0.00524461$  \\ 
$t=0.75$ & $0.0582143$ & $0.0920184$ & $0.0176119$ & $0.00756466$  \\ 
$t=0.99$ & $0.0947395$ & $0.190933$ & $0.0597380$ & $0.0107980$  \\ \hline
\end{tabular}
\caption{Norm $\Delta \epsilon^N(t)$ of differences in consecutive estimates (see~\eqref{diff_norm}) 
at different times $t$ and for different truncation orders $N$. 
This table corresponds to Example~\ref{example4}.}
\label{table4error}
\end{center}
\end{table}
    
%\begin{figure}[hbt!]
%  \begin{center}
%    \includegraphics[width=0.4\textwidth]{error4d.pdf}
%    \caption{Error $E^N(t)$ (see~\eqref{err_estim}), for different times as indicated. 
%    This figure corresponds to Example~\ref{example4}.}
%        \label{figure4error2}
%    \end{center}
%  \end{figure}
    
\end{example}

\begin{example} \label{example5} \normalfont
In this final example, we deal with discrete uncertainties, under the setting of Theorem~\ref{te1discrete}. Consider again the polynomial problem of Example~\ref{example1}, with $A_0=4$, $A_1=2$, $B_0=0$, $B_1=-1$, and now $Y_0\sim\text{Bernoulli}(0.4)$ and $Y_1\sim\text{Uniform}(-1,1)$, being all independent. 
By Theorem~\ref{nostre}, there is a unique mean square solution $X(t)=\sum_{n=0}^\infty X_n t^n$ on $\mathbb{R}$. 
For each $t\neq0$, the random variable $X(t)$ is absolutely continuous, due to the absolute continuity of $Y_1$. 
Theorem~\ref{te1discrete}, with $Y_1$ playing the role of $Y_0$ (see Remark~\ref{rmk_Y1}) allows for approximating $f_{X(t)}(x)$ by utilizing the convergence $\lim_{N\rightarrow\infty} f_{X^N(t)}(x)=f_{X(t)}(x)$, which holds for almost every $x\in\mathbb{R}$. 
In this particular example, Algorithm~\ref{algo} is used with $M=1,000,000$ iterations, because the deterministic values for $A(t)$ and $B(t)$ make the computational load much less demanding (see the discussion of Section~\ref{comp_aspects}). 
We will thus identify $\hat f_{X^N(t)}(x)=f_X^{N,M}(x,t)$.
For the time $t=1.5$, the numerical estimates $\hat f_{X^N(t)}(x)$ are displayed in Figure~\ref{figure5} (left plot). Observe that the computed density functions are completely different to those of the previous examples: they are discontinuous, in fact step functions, mainly due to the discontinuities in $f_{Y_1}$. This example highlights the ability of our method to capture discontinuities. The analysis of the convergence is completed with the centered plot from Figure~\ref{figure5} and Table~\ref{table5error}, where the consecutive differences $\delta\epsilon^N(x,t)$ (see \eqref{diff_loc}) and their norms $\Delta\epsilon^N(t)$ (see \eqref{diff_norm}) are reported, respectively. Table~\ref{table5error} considers times $t=0.5$, $1$ and $1.5$. Finally, the last panel from Figure~\ref{figure5} plots $\log E^N(t)$ (see~\eqref{err_estim}) as a function of $N$. The lower bound for the global error is the sampling error, which is smaller than in the previous four examples, owing to the larger number of samples considered. Comparing the last plot from Figure~\ref{figure5} with the corresponding figures from the previous four examples, the non-monotonic decay of the error is also more pronounced for the three times. 
The discontinuity in the graph of the target distribution $f_{X(t)}(x)$ can explain the highly non-monotonic decay with the truncation order. 
Moreover, $f_{Y_1}$ is not Lipschitz continuous, so the exponential convergence rate discussed in Remark~\ref{rmk_rate} is not applicable in the present example. 
Finally, as for the other examples, the truncation order needed to reduce the error to the sampling contribution increases as we move away from the origin $t_0=0$. This behavior may pose severe challenges for large times $t$.

%\begin{figure}[hbt!]
%  \begin{center}
%    \includegraphics[width=0.32\textwidth]{}
%    \includegraphics[width=0.32\textwidth]{}
%    \includegraphics[width=0.32\textwidth]{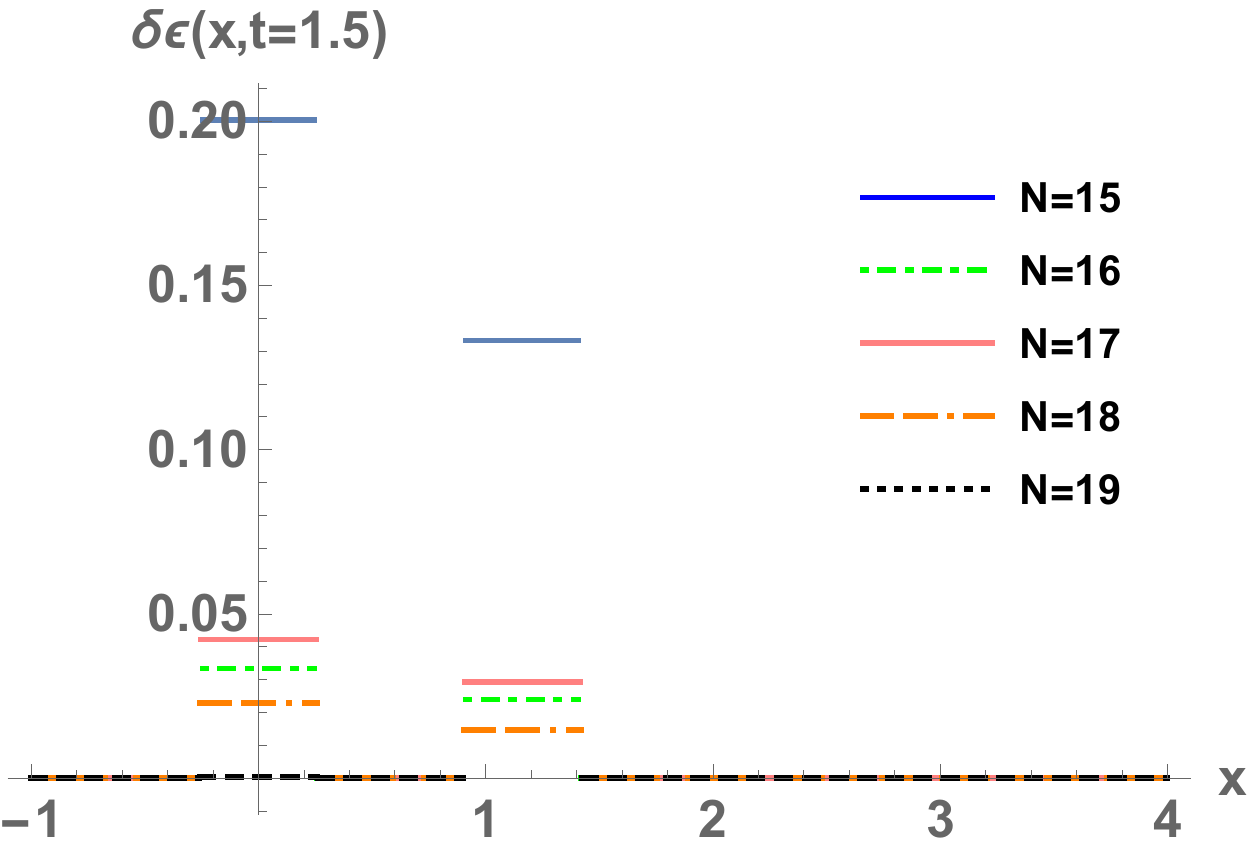}
%    \caption{Differences in consecutive estimates $\delta\epsilon^N(x,t)$  (see~\eqref{diff_loc}) at $t=0.5$ (left), $t=1$ (center) and $t=1.5$ (right), with orders of truncation $N$ as indicated. 
%        This figure corresponds to Example~\ref{example5}.}
%        \label{figure5error}
%    \end{center}
%  \end{figure}

%\begin{figure}[hbt!]
%  	\begin{center}
%    	\includegraphics[width=0.4\textwidth]{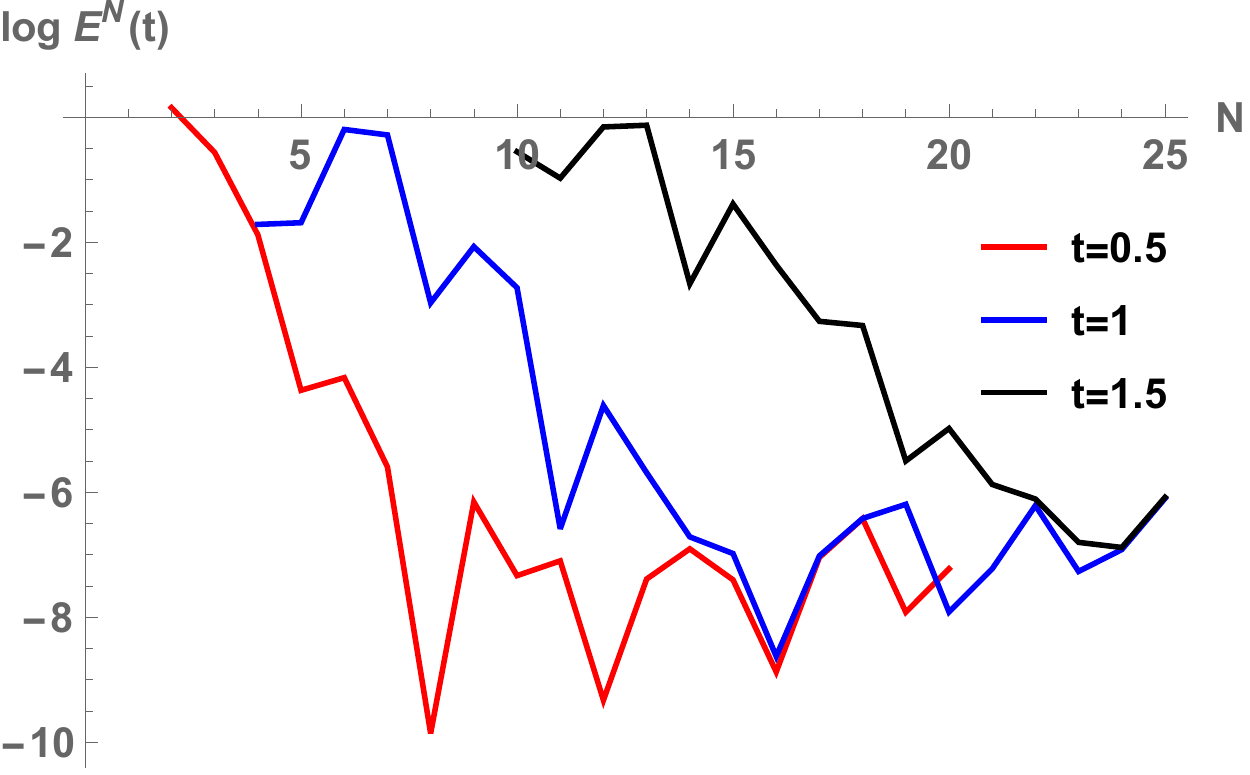}
%    	\caption{Error $E^N(t)$ (see~\eqref{err_estim}), for different times as indicated. 
%    	This figure corresponds to Example~\ref{example5}.}
%        \label{figure5error2}
%	\end{center}
%\end{figure}

\begin{figure}[hbt!]
  \begin{center}
    \includegraphics[width=0.32\textwidth]{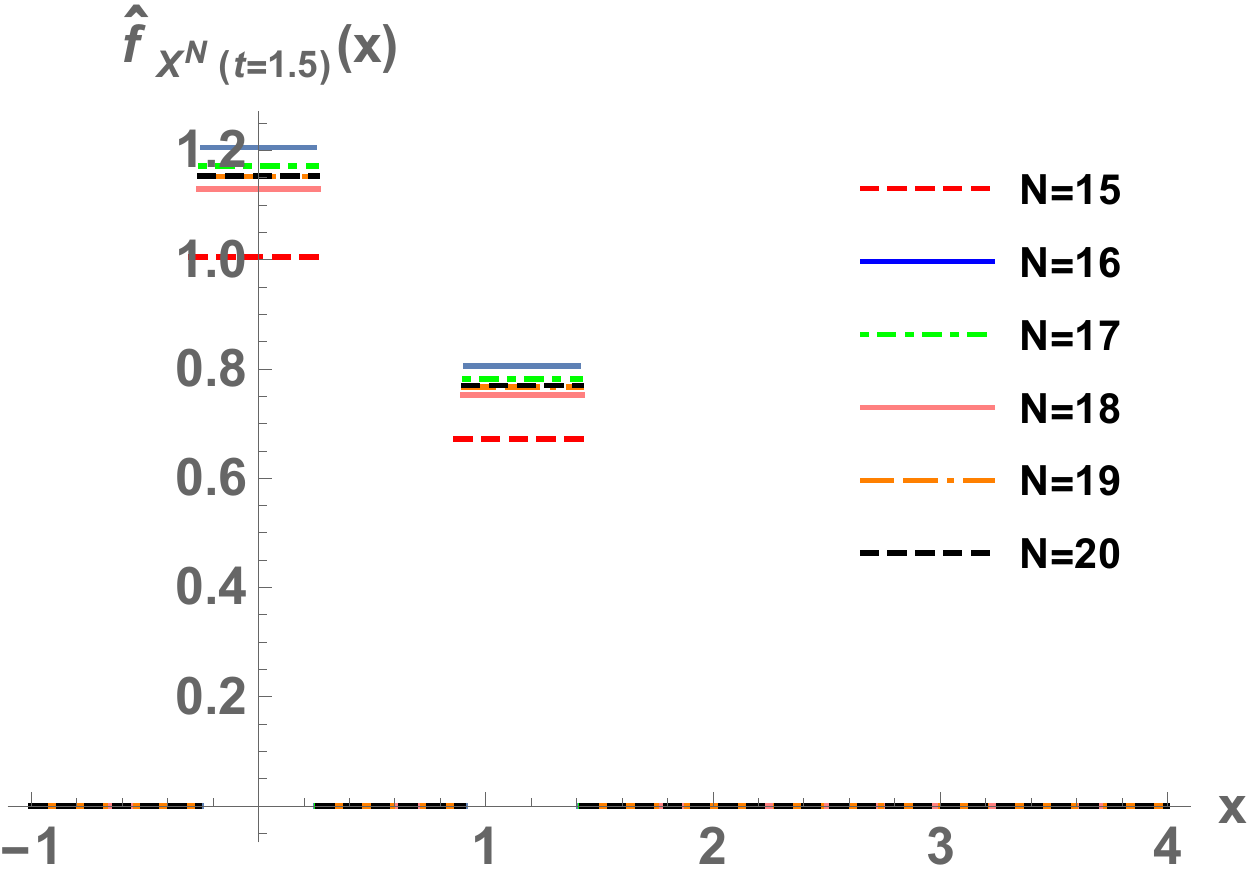}
		\includegraphics[width=0.32\textwidth]{error5c.pdf}
		\includegraphics[width=0.32\textwidth]{error5d.pdf}
    \caption{In the left plot, graphical representations of the Monte Carlo estimates $\hat f_{X^N(t)} (x)$ at $t=1.5$, with orders of truncation $N$ as indicated. In the center plot, differences in consecutive estimates $\delta\epsilon^N(x,t)$ (see~\eqref{diff_loc}) at $t=1.5$. In the last plot, error $E^N(t)$ (see~\eqref{err_estim}), for different times as indicated. This figure corresponds to Example~\ref{example5}.}
        \label{figure5}
    \end{center}
  \end{figure}
	
	\begin{table}[hbt!]
\footnotesize
\begin{center}
\begin{tabular}{|c|ccccc|} \hline
 & $N=2$ & $N=3$ & $N=4$ & $N=5$ & $N=6$  \\ 
$t=0.5$ & $0.833333$ & $0.678571$ & $0.140704$ & $0.0280760$ & $0.0190810$  \\ \hline
$$ & $N=7$ & $N=8$ & $N=9$ & $N=10$ & $N=11$  \\ 
$t=1$ & $0.785425$ & $0.0767401$ & $0.187360$ & $0.0665995$ & $0.00866170$  \\ \hline
$$ & $N=15$ & $N=16$ & $N=17$ & $N=18$ & $N=19$  \\ 
$t=1.5$ & $0.331941$ & $0.0569325$ & $0.0732120$ & $0.0390644$ & $0.00346696$  \\ \hline
\end{tabular}
\caption{Norm $\Delta \epsilon^N(t)$ of differences in consecutive estimates (see~\eqref{diff_norm}) at different times $t$ and truncation orders $N$. 
This table corresponds to Example~\ref{example5}.}
\label{table5error}
\end{center}
\end{table}

\end{example}

\section{Control variates method: Computational aspects and numerical analysis} \label{sec_control_var}

In Section~\ref{comp_aspects} we saw that the estimation error is split into two contributions: bias error caused by the Fr\"obenius method and statistical error due to using a finite number $M$ of samples. The statistical error, denoted as $\mathcal{E}_{N,M}(x,t) = f_{X^N(t)}(x)-f_X^{N,M}(x,t)$, has mean zero and variance $\sigma_N^2(x,t)/M$ asymptotically with $M$, where $\sigma_N^2(x,t)$ is defined by~\eqref{sigma2N} (assuming that $\sigma_N^2(x,t)<\infty$). Thus, the Monte Carlo error depends on the sample length $M$ and on the variance of the estimator.

Variance reduction techniques aim at improving the Monte Carlo estimates for a given computational effort~\cite{botev}. In what follows, we describe the control variates method applied for this setting. Let
\[
 Z^N(x,t)=f_{Y_0}\left(\frac{x-Y_1S_1^N(t)}{S_0^N(t)}\right)\frac{1}{|S_0^N(t)|},
 \]
such that $f_{X^N(t)}(x)=\mathbb{E}[Z^N(x,t)]$ and $\sigma_N^2(x,t)=\mathbb{V}[Z^N(x,t)]$. Let $S^{N_0}(t)$ be the control variate constructed from the power series $S_0^{N_0}(t)$ and/or $S_1^{N_0}(t)$ truncated at level $N_0<N$. The statistics of $S^{N_0}(t)$ can be exactly computed with no difficulties. Indeed, $S^{N_0}(t)$ is just a polynomial in $t$, so its statistical moments are calculated using the linearity of the expectation and the precomputed moments of the random inputs $A_0,\ldots,A_{N_0}$, $B_0,\ldots,B_{N_0}$. Let $\tau^{N_0}(t)=\mathbb{E}[S^{N_0}(t)]$. Let
\begin{equation}
 Z^{N,*}(x,t)=Z^N(x,t)+c^*(x,t)\left(S^{N_0}(t)-\tau^{N_0}(t)\right) 
 \label{ZNstar}
\end{equation}
be an unbiased estimator of $f_{X^N(t)}(x)$, where $c^*(x,t)$ is a function that minimizes the variance 
\[ \mathbb{V}[Z^{N,*}(x,t)]= \mathbb{V}[Z^N(x,t)]+\left(c^*(x,t)\right)^2\mathbb{V}[S^{N_0}(t)]+2c^*(x,t)\Cov[Z^N(x,t),S^{N_0}(t)]. \]
The function $c^*(x,t)$ is usually referred to as the control coefficient. It is easy to see that 
\begin{equation}
 c^*(x,t)=-\frac{\Cov[Z^N(x,t),S^{N_0}(t)]}{\mathbb{V}[S^{N_0}(t)]}. 
 \label{cStar}
\end{equation}
The variance of the new estimator results
\[ \mathbb{V}[Z^{N,*}(x,t)]=\left(1-\rho_{Z^N(x,t),S^{N_0}(t)}^2\right)\mathbb{V}[Z^N(x,t)]<\mathbb{V}[Z^N(x,t)], \]
where $\rho_{Z^N(x,t),S^{N_0}(t)}\in(-1,1)$ is the correlation coefficient.

From the computational viewpoint, \eqref{cStar} is calculated as follows: the variance of $S^{N_0}(t)$ from the denominator is determined exactly, as previously explained; whereas the covariance from the numerator is not available in general and is estimated by executing crude Monte Carlo simulation, using a moderate number of realizations. Once~\eqref{cStar} is estimated, a crude Monte Carlo procedure, similar to that from Algorithm~\ref{algo}, is conducted for $Z^{N,*}(x,t)$~\eqref{ZNstar}. The steps are briefly described in Algorithm~\ref{algooo}. We do not explicitly write the parts corresponding to crude Monte Carlo procedures, as they were already detailed in Algorithm~\ref{algo}. Here we recommend to set numeric values for $t$ and $x$, otherwise the computational burden is prohibitive.

\begin{algorithm}[hbtp!]
    \begin{algorithmic}[1]
 
   \Statex \textbf{Inputs:} $t_0$; $N_0<N$; $f_{Y_0}$; probability distribution of $A_0,\ldots,A_N$, $B_0,\ldots,B_N$, $Y_1$; control variate $S^{N_0}(t)$; and number $M$ of realizations.
	 \State Calculate $\tau^{N_0}(t)\gets\mathbb{E}[S^{N_0}(t)]$, and $\mathbb{V}[S^{N_0}(t)]$ \Comment{Exact}
	 \State Crude Monte Carlo simulation for $\Cov[Z^N(x,t),S^{N_0}(t)]$ \Comment{\parbox[t]{.36\linewidth}{Low number of samples $\ll M$; Analogous to Algorithm~\ref{algo}}}
	\State $c^*(x,t)\gets -\frac{\widehat{\Cov}[Z^N(x,t),S^{N_0}(t)]}{\mathbb{V}[S^{N_0}(t)]}$ \Comment{See~\eqref{cStar}}
	\State Crude Monte Carlo simulation for $\mathbb{E}[Z^{N,*}(x,t)]$ \Comment{See~\eqref{ZNstar}; Analogous to Algorithm~\ref{algo}}
	\State \textbf{Return} $f_X^{N,M}(x,t)\gets \hat{\mathbb{E}}[Z^{N,*}(x,t)]$\Comment{Approximation of $f_{X^N(t)}(x)$}
\end{algorithmic}
    \caption{Estimation of $f_{X^N(t)}(x)$ \textit{via} a control variates method.}
        \label{algooo}
\end{algorithm}

Algorithm~\ref{algooo} is more efficient than Algorithm~\ref{algo}, as the variance of the Monte Carlo estimator has been lowered. In numerical computations, this entails several consequences. Suppose that the number $M$ of realizations is fixed, and we vary the truncation order $N$. The error decreases exponentially with $N$ until the sampling error is reached. The variance reduction technique allows for a lower sampling error for the same sample length $M$, so the exponential decay of the error with $N$ is prolonged. On the other hand, suppose that $N$ is fixed and $M$ varies. In log-log scale, the Monte Carlo error is approximately a line with slope $-1/2$; the control variates method translates the error line lower according to the difference of the logarithms of the standard deviations of the estimators $Z^N(x,t)$ and $Z^{N,*}(x,t)$. Obviously, the performances depend on the particular random numbers generated.

\begin{example} \label{exCVM} \normalfont
The setting is the same as Example~\ref{example1}: $A(t) = A_0+A_1t$ and $B(t) = B_0+B_1t$, where $A_0=4$, $A_1\sim\text{Uniform}(0,1)$, $B_0\sim\text{Gamma}(2,2)|_{[0,4]}$, $B_1\sim\text{Bernoulli}(0.35)$, $Y_0\sim\text{Normal}(2,1)$ and $Y_1\sim\text{Poisson}(2)$, all being independent random variables. The goal of this example is to show that the control variates method entails a lower statistical error. Due to the previous theoretical exposition, this fact is generalizable to any other situation.

We choose as control variate $S^{N_0}(t)=S_0^{N_0}(t)$, $N_0=10$. The covariance $\Cov[Z^N(x,t),S^{N_0}(t)]$ is estimated by using crude Monte Carlo simulation, with 2500 realizations. Given $M=20,000$, the output function $f_X^{N,M}(x,t)$ is denoted as $\hat f_{X^N(t)}(x)$. In Table~\ref{tabCVM}, we tabulate the differences in consecutive (in $N$) estimates, $\Delta\epsilon^N(t)$, defined in~\eqref{diff_norm}. For $t=1.5$, the results for both Algorithms~\ref{algo} and~\ref{algooo} are reported, for comparison. Notice that, while the consecutive differences start to stabilize when $N\geq14$ for the crude Monte Carlo algorithm, they keep decreasing for the control variates method. This can be visually observed in Figure~\ref{figCVM}. In the first panel, we report the errors $E^N(t)$ defined by~\eqref{err_estim} (we set $L=30$). The semi-log plot shows exponential decay with $N$ until the statistical error is attained. For the simple Monte Carlo approach, the decay stops and stabilizes earlier than for the control variates method. The second panel of the figure presents, for $N=20$ fixed, how the statistical error $\text{MCE}^P(t)$ decreases with the sample length $P$, see~\eqref{mcerr}. The log-log plot reflects the decrease rate $\mathcal{O}(1/\sqrt{P})$; the control variates method has lower constant corresponding to $\mathcal{O}$ and becomes more efficient as $P$ grows.

\begin{table}[hbt!]
\footnotesize
\begin{center}
\begin{tabular}{|c|ccccc|}\hline
$ $ & $N=11$ & $N=12$ & $N=13$ & $N=14$ & $N=15$  \\
 Crude Monte Carlo  & $0.348643$ & $0.180075$ & $0.0721679$ & $0.0320314$ & $0.0198364$  \\
Control variates & $0.32902$ & $0.171683$ & $0.0769344$ & $0.0296144$ & $0.00371231$  \\ \hline
\end{tabular}
\caption{Norm $\Delta \epsilon^N(t=1.5)$ of differences in consecutive estimates (see~\eqref{diff_norm})
for different truncation orders $N$, for the crude Monte Carlo and the control variates methods.
This table corresponds to Example~\ref{exCVM}.}
\label{tabCVM}
\end{center}
\end{table}

\begin{figure}[hbt!]
  \begin{center}
	\includegraphics[width=0.4\textwidth]{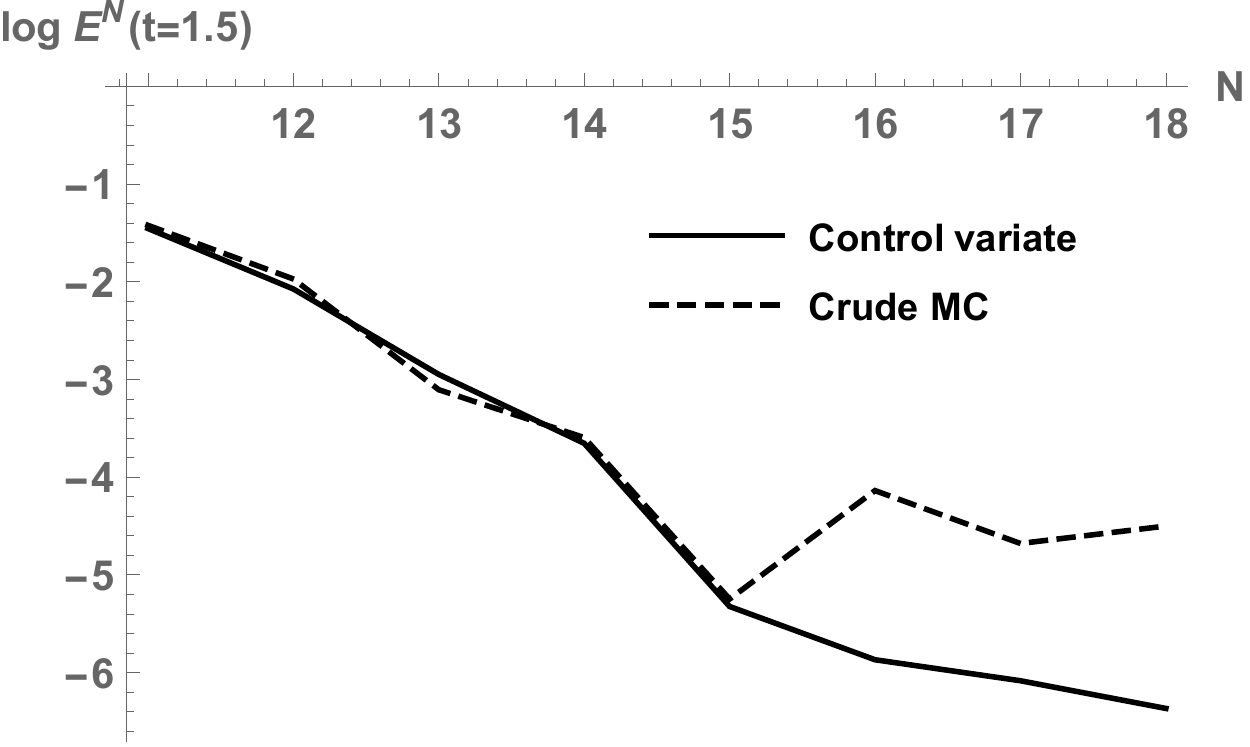}
    \includegraphics[width=0.4\textwidth]{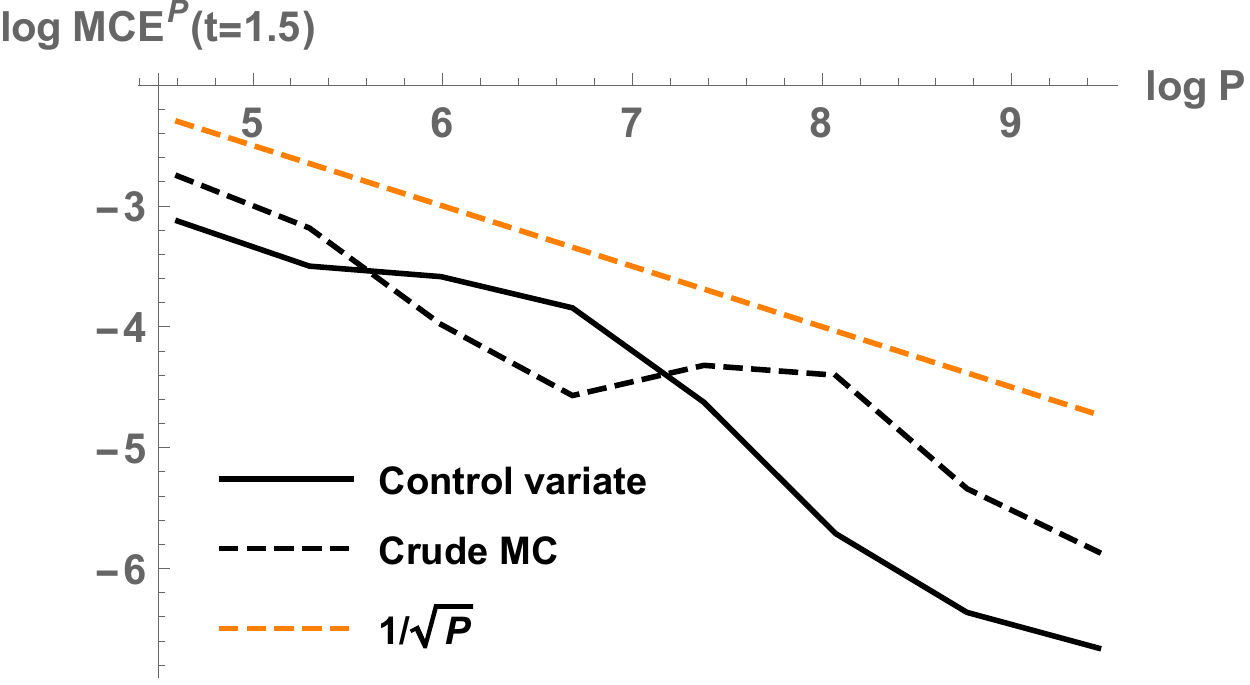}
    \caption{Left: error $E^N(t=1.5)$ (see~\eqref{err_estim}). Right: sampling error $\text{MCE}^P(t=1.5)$ (see~\eqref{mcerr}) with the number of realizations $P$.
    This figure corresponds to Example~\ref{exCVM}.}
        \label{figCVM}
    \end{center}
  \end{figure}
	
We finish this example by showing in Figure~\ref{figrho} the correlation $\rho(x,t)=\rho_{Z^N(x,t),S^{N_0}(t)}$ and the control coefficient $c^*(x,t)$, for $N=20$, $N_0=10$ and $t=1.5$. Observe that the correlation coefficient lies within $(-1,1)$ for all $x$. The variance reduction in the pointwise estimation of the density at $x$ depends on the magnitude of the correlation coefficient. On the other hand, the control coefficient is non-negligible on the interval $(-1,5)$, approximately, which corresponds to the domain where the density is also not negligible, see the third panel from Figure~\ref{figure1}.

\begin{figure}[hbt!]
  \begin{center}
	\includegraphics[width=0.4\textwidth]{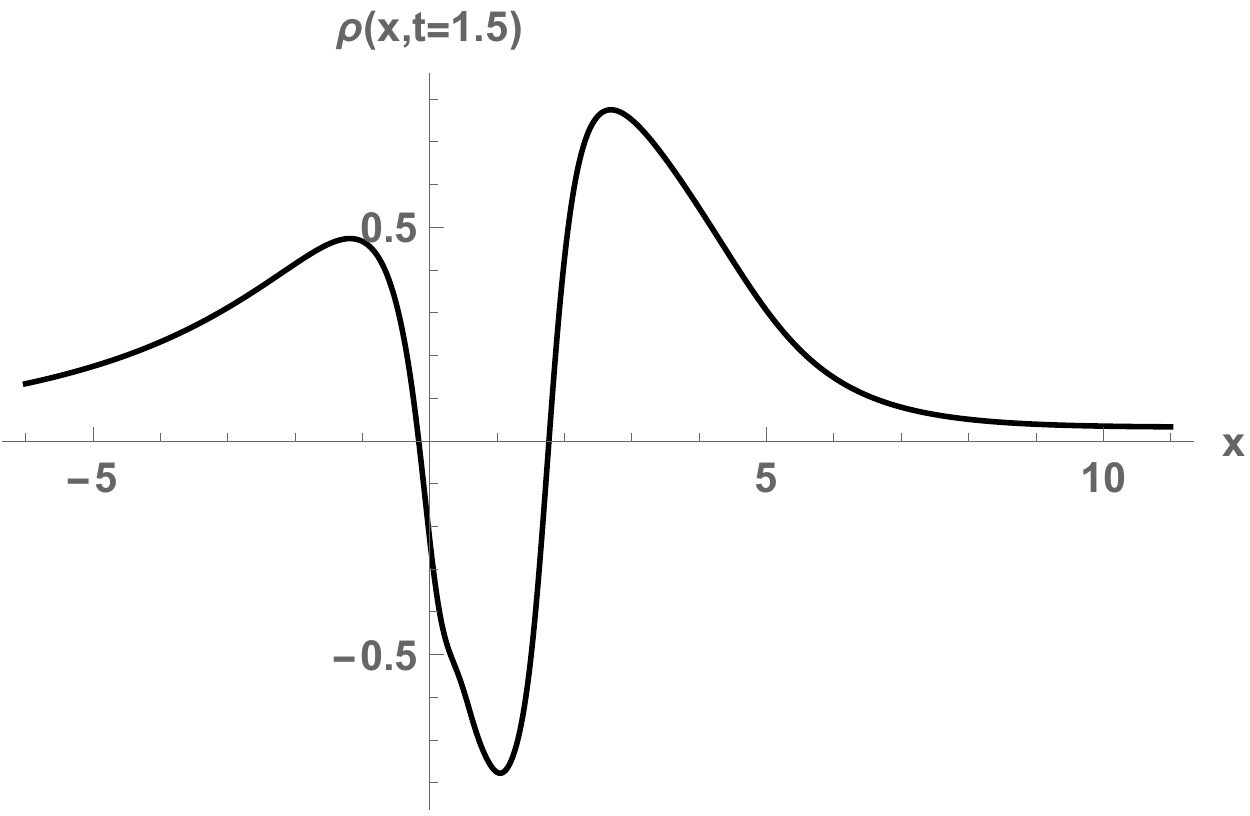}
    \includegraphics[width=0.4\textwidth]{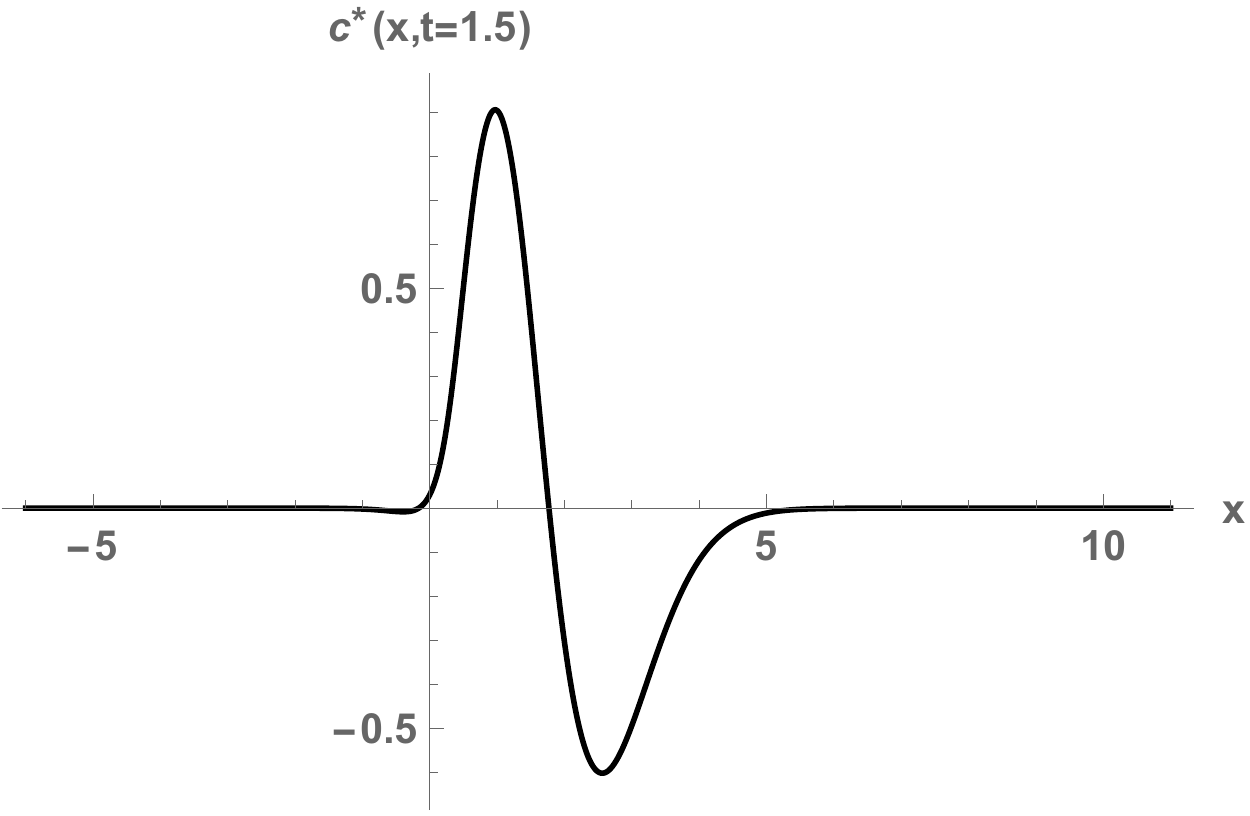}
    \caption{Left: correlation coefficient. Right: control coefficient.
    This figure corresponds to Example~\ref{exCVM}.}
        \label{figrho}
    \end{center}
  \end{figure}

\end{example}

\section{Conclusions and perspectives} \label{concl}

In this paper, we address the analysis of the random non-autonomous second order linear differential equation. When the data $A(t)$ and $B(t)$ are given by random power series on $(t_0-r,t_0+r)$ in $\leb^\infty(\Omega)$, and the initial conditions $Y_0$ and $Y_1$ belong to $\leb^2(\Omega)$, it is possible to construct a random power series solution $X(t)$ on $(t_0-r,t_0+r)$ in the mean square sense, whose coefficients satisfy a random difference equation. This approach is the generalization of the Fr\"obenius method to the random setting. The convergence rate of the power series of $X(t)$ is exponential for each time $t$, but not uniformly on the whole time domain $(t_0-r,t_0+r)$. For a fixed tolerance on the mean square error of $X(t)$, the order of truncation of the power series needs to be increased, in general, when $|t-t_0|$ grows.

We proved the existence of solution $X(t)$ when $A(t)$ and $B(t)$ are bounded. In the unbounded case, we present in this paper two counterexamples of existence. In practice, the hypotheses can be fulfilled by truncating unbounded random coefficients.

The probability density function of $X(t)$ can be expressed as the expectation of a random process $Z(x,t)$,  $f_{X(t)}(x)=\mathbb{E}[Z(x,t)]$, using the law of total probability. 
A closed-form expression for $Z$ is derived in terms of the fundamental set by exploiting the linearity of the system. 
However, to compute this expectation, one needs to perform a dimension reduction of the problem, by truncating the series of the fundamental set used to express the solution $X(t)$. Denoting $X^N(t)$, $N\geq0$, the truncation of $X(t)$, we show that $f_{X^N(t)}(x)$ converges to $f_{X(t)}(x)$ pointwise as $N\rightarrow\infty$ under certain conditions (regarding Nemytskii operators); in some cases, an exponential convergence may be achieved for each $t$ and $x$. The pointwise convergence also implies convergence in $\leb^p(\mathbb{R})$, $1\leq p<\infty$. In particular, the convergence in $\leb^1(\mathbb{R})$ is equivalent to the convergence in the total variation and the Hellinger distances, which are instances of $f$-divergences. 

From a numerical standpoint, the expectation defining $f_{X^N(t)}(x)=\mathbb{E}[Z^N(x,t)]$ is computable \textit{via} a Monte Carlo sampling strategy. We propose a crude Monte Carlo algorithm for that purpose, which estimates $f_{X^N(t)}(x)$. This algorithm is implemented in the software Mathematica\textsuperscript{\tiny\textregistered}, and it can be used to compute pointwise approximations of the density function $f_{X(t)}(x)$. One key feature of the algorithm is that it handles discontinuity and non-differentiability points of $f_{X(t)}(x)$ appropriately, without smoothing them out. To improve the efficiency of the algorithm, several variance reduction methods can be conducted. We employ the control variates method to lower the sampling error for the same computational cost.

To the best of our knowledge, this paper is the first one to provide such analysis of random second order linear differential equations. However, we point out certain limitations of our methodology, which constitute potential avenues for future developments. 

To start with, despite the exponential convergence rate, the approximations substantiated on the Fr\"obenius method may deteriorate for large $|t-t_0|$. This fact is inherent to Taylor series-based methods and also plagues other types of stochastic computations, such as PC expansions. Following~\cite{olm}, using random time-transformations may help to improve the convergence of the Fr\"obenius method and mitigate this issue.

Another point requiring a more in-depth analysis is the ignorance of the specific values of $\delta$ and $\mu$. In particular, we showed that if the truncated processes from the fundamental set vanish for some trajectories near the time $t$ of interest, the numerical estimate of the density becomes very noisy (see Example~\ref{example3}). This effect is due to the variance of $Z^N(x,t)$ that may be very large or infinite, with a severely deteriorated Monte Carlo convergence in these situations. We are currently exploring different strategies to sort out this issue, such as the path-wise selection of the variable ($Y_0$ or $Y_1$) used in the expression of $Z^N(x,t)$, in order to control its variance.

Efforts to weaken or modify the theoretical hypotheses and enlarge the applicability of our method shall also be carried out. As an example, the extension of the method to the case of $Y_0$ and $Y_1$ not absolutely continuous would also present a valuable achievement. 
Similarly, an extension of the present methodology to linear systems of second order random differential equations may be of great interest,
while the application to other stochastic models of our expertise on random expansions and density approximations could be interesting.

At the computational level, the Monte Carlo estimation of $f_{X^N(t)}(x)$ introduces a statistical error since, in numerical computations, we are restricted to a finite number $M$ of realizations. Therefore, an error of order $1/\sqrt{M}$ is unavoidable, even for $N$ very large. The results presented in the paper have highlighted the crucial importance of bias and sampling errors. In the future, it would be beneficial to rely on improved sampling strategies, such as multilevel Monte Carlo \cite{giles,giles2}, to balance the bias and sampling errors, while reducing the computational cost of the Monte Carlo estimates of the density. This topic is the focus of our current efforts.

\section*{Acknowledgements}
This work is supported by the Spanish ``Ministerio de Econom\'{i}a y Competitividad'' grant MTM2017--89664--P. Marc Jornet acknowledges the doctorate scholarship granted by PAID, as well as ``Ayudas para movilidad de estudiantes de doctorado de la Universitat Polit\`ecnica de Val\`encia para estancias en 2019'', for financing his research stay at CMAP. Julia Calatayud acknowledges ``Fundaci\'o Ferran Sunyer i Balaguer'', ``Institut d'Estudis Catalans'' and the award from ``Borses Ferran Sunyer i Balaguer 2019'' for funding her research stay at CMAP. All authors are also grateful to Inria (Centre de Saclay, DeFi Team), which hosted Marc Jornet and Julia Calatayud during their research stays at \'Ecole Polytechnique. The authors thank the reviewers for the valuable comments and suggestions, which have greatly enriched the quality of the paper.

\end{document}